\numberwithin{equation}{section}
\newtheorem{theorem}{Theorem}[section]
\newtheorem{corollary}[theorem]{Corollary}
\newtheorem{lemma}[theorem]{Lemma}
\newtheorem{prop}[theorem]{Proposition}
\theoremstyle{definition}
\newtheorem{remark}[theorem]{Remark}
\theoremstyle{definition}
\theoremstyle{definition}
\def\dashint{\operatorname%
{\,\,\text{\bf-}\kern-.98em\DOTSI\intop\ilimits@\!\!}}
\def\\det{\text{\det}}
\def\.5{\frac{1}{2}}
\def\h{\mathcal{H}}
\newcommand{\RN}[1]{%
  \textup{\uppercase\expandafter{\romannumeral#1}}%
}
\renewcommand{\epsilon}{\varepsilon}
\newcounter{marnote}
\begin{document}
	\title[Stress blow-up analysis for Stokes flow in 3D]{Stress blow-up analysis when suspending rigid particles approach boundary in 3D Stokes flow}
	
	\author[H.G. Li]{Haigang Li}
	\address[H.G. Li]{School of Mathematical Sciences, Beijing Normal University, Laboratory of MathematiCs and Complex Systems, Ministry of Education, Beijing 100875, China.}
	\email{hgli@bnu.edu.cn}
	
	\author[L.J. Xu]{Longjuan Xu}
	\address[L.J. Xu]{Academy for Multidisciplinary Studies, Capital Normal University, Beijing 100048, China.}
	\email{longjuanxu@cnu.edu.cn}
	
	\author[P.H. Zhang]{PeiHao Zhang}
	\address[P.H. Zhang]{School of Mathematical Sciences, Beijing Normal University, Beijing 100875, China.}
	\email{202231130004@mail.bnu.edu.cn}


	\date{\today} 
	
	\begin{abstract}
		The stress concentration is a common phenomenon in the study of  fluid-solid model.  In this paper, we investigate the boundary gradient estimates and the second order derivatives estimates for the Stokes flow when the rigid particles approach the boundary of the matrix in dimension three. We classify the effect on the blow-up rates of the stress from the prescribed various boundary data: locally constant case and locally polynomial case. Our results hold for general convex inclusions, including two important cases in practice, spherical inclusions and ellipsoidal inclusions. The blow-up rates of the Cauchy stress in the narrow region are also obtained. We establish the corresponding estimates in higher dimensions greater than three.
	\end{abstract}
	
	\maketitle
	
	\section{Introduction and main results}
	
	\subsection{Background and Problem Formulation}\label{subsec1.1}
	In high-contrast composite materials, the physical fields in the narrow region between two adjacent inclusions or between the inclusions and the matrix boundary, such as the electric fields and the stress fields, may be concentrated caused by this microstructure and become arbitrary large. In the last two decades, there are many important progresses on the study of the stress concentration phenomena for two close-to-touching particles in the contexts of Laplace equation for electrostatic field, Lam\'e system for linear elasticity and Stokes equations for viscous flow. In this paper, we investigate the boundary estimates for the gradient of the solutions to the incompressible Stokes 
	flow in a bounded domain $D\subset\mathbb{R}^{d}\, (d\geq3)$ 
	$$\mu\,\Delta {\bf u}=\nabla p,\quad\nabla\cdot {\bf u}=0,$$
	when the rigid particles suspending inside are very close to the matrix boundary $\partial{D}$. The aim is to classify the effect on the blow-up rates of the gradient and the Cauchy stress from various boundary data.  
	
	Based on the geometry of the particle and the domain and the given boundary data, we construct a class of auxiliary functions to capture the main singularity of the stress term and the pressure term. We first derive the optimal blow-up rates of the stress, via establishing the pointwise upper bounds in the neck region and the lower bounds at the midpoint of the shortest line in between the particle and the boundary, and then proceed to estimate  the second order derivatives provided the domain and the boundary data are sufficiently smooth. The advantage of this method is that when we deal with the pressure term we do not need to solve the stream function. The simplified model in 2D was considered in \cite{LXZ}. This is a continuation. However, Several estimates have to be improved to asymptotic expansion and new difficulty in the derivation of the optimality of the blow-up rate is overcome. Our results hold for general convex particles, including the two important and useful cases in practical application: spherical inclusions and ellipsoid inclusions in dimension three.  Our method also can be extended to deal with the problem in higher dimensions greater than three. The corresponding upper bounds are presented as well. 
	
	There is a long history on the study of the Stokes flow in presence of two circular cylinders. More than one hundred years ago, Jeffrey developed in \cite{Jef1} a separable solution method based on bipolar coordinates and then analyzed in \cite{Jef2} the flow generated by two well-separated rotating circular cylinders. See Wannier \cite{Wann}, Hillairet \cite{H} and the references therein for the related works. However, it is difficult to analyze the singular behavior of the solution when the cylinders are close-to-touching, due to the high complexity of the solution. Recently, Ammari, Kang, Kim, and Yu \cite{AKKY} investigated the stress concentration in the two-dimensional Stokes flow, when inclusions are the two-dimensional cross sections of circular cylinders of the same radii, by using the bipolar coordinates to construct two vector-valued functions to derive an asymptotic representation formula for the stress. The blow-up rate of $|\nabla {\bf u}|$ is proved to be $\varepsilon^{-1/2}$, where $\varepsilon$ is the distance between two cylinders. Laterly, the first two authors \cite{LX} studied the general convex inclusions in dimensions two and three and proved that the optimal blow-up rates of the stress are, respectively, $\varepsilon^{-1/2}$ in dimension two and $(\varepsilon|\ln\varepsilon|)^{-1}$ in dimension three, by adapting the iteration method developed in the study of linear elasticity problem  \cite{LLBY,BLL} to establish pointwise upper bounds and lower bounds at the narrowest place between two inclusions. The estimates in \cite{AKKY,LX} can be regarded as interior estimates. However, due to the effect directly from the boundary data, the solutions of  the Stokes flow may become more irregular near the boundary. To clarify such singularity from the geometry of boundary and given boundary data,  we proceed in this paper to study the stress concentration in the Stokes flow when particles approach the boundary of the matrix in three dimension and higher dimensions.  There are some closely related works  on the expansions of the energy norm, see G\'erard-Varet and Hillairet \cite{Hill6},  Hillairet and Kela\"{i} \cite{Hill1}, and \cite{DM,DMC} for more.
	
	As mentioned at the beginning, the concentration phenomenon also occurs in the electrostatic field and elastic stress composite materials. Since the well-known numerical work of Babu\u{ s}ka, Andersson, Smith and Levin \cite{Bab}, there have been many important progresses in this field, see Bonnetier and Vogelious \cite{BV}, Li and Vogelious \cite{LV} and Li and Nirenberg \cite{LN}. It is of vital importance in mathematics to consider the extreme cases when the parameter in the inclusions degenerated to infinite, in order to investigate the concentration degree depending on the small interparticle distance. In the electrostatics case, when the particles are perfectly conductive, the blow-up rate of the electric field, represented by the gradient of the solution, is proved to be of order $\varepsilon^{-1/2}$ in dimension two and $(\varepsilon|\ln\varepsilon|)^{-1}$ in three  dimensions, see the work by Ammari, Kang, and Lim \cite{AKL}, Bao, Li, and Yin \cite{bly1}. For more literature related to the electric field concentration, we refer the reader to the introduction of \cite{LX} and  \cite{AKL3, AKLLZ,bly2, bt, kly, kly2, LY, Yun,ABTV,BC,BT,BV,DongLi,DongZhang,KLeY},  for instance (this is far from a complete list). For the insulated conductivity problem, see \cite{DLY1,DLY2,bly2,LiY,W,Yun2}. In the context of linear elasticity, when the Lam\'e parameters of  inclusions degenerate to infinity, the optimal blow-up rate of the gradient is proved to be $\varepsilon^{-1/2}$ in dimension two \cite{BLL,KY}, and $(\varepsilon|\ln\varepsilon|)^{-1}$ in dimension three \cite{BLL2,Li2021}. Especially, for the boundary estimates when particles are close to the boundary, see Bao, Ju and Li \cite{BJL}, and Li and Zhao \cite{LZ}. We also refer the reader to \cite{CS1,CS2,gn} for the corresponding nonlinear problem, and \cite{Lu} for the asymptotic behavior of fluid flows in domains perforated with a large number of tiny holes.
	
	Before we state our main results, we first fix our domain and notations. In order to establish the estimate of the gradient and that of the second-order partial derivatives, in this paper we assume that the domain is of class $C^3$ and the boundary data $\boldsymbol\varphi\in C^{2,\alpha}(\partial D;\mathbb R^d)$, because we will use the $W^{2,\infty}$-estimates in \cite[Proposition 3.6]{LX}, see Step 3 in the proof of Proposition \ref{propu11} for the details. Suppose $D\subset\mathbb{R}^{d}$ is a bounded open set with $C^{3}$ boundary, and $D_1^0$ is a $C^{3}$ strictly convex subset of $D$, touching $\partial D$, say, at the origin, and the $x_{1},\dots,x_{d-1}$-axes being in their common tangent plane, after a translation and rotation of coordinates if necessary. We use $x'=(x_{1},\dots,x_{d-1})$. By a translation, set $ D_1^{\varepsilon}:=D_{1}^{0}+(0',\varepsilon)$, for small constant $\varepsilon>0$. 
	For the sake of simplicity, we drop the superscript $\varepsilon$ and denote
	\begin{equation*}
	D_{1}:=D_{1}^{\varepsilon}\, \quad\text{and}\quad  \Omega:=D\setminus\overline{D}_1.
	\end{equation*}
	Then $\overline{D}_{1}\subset D$, $\varepsilon:=\mbox{dist}(D_{1},\partial D)$. 
	We assume that the $C^{3}$ norms of $\partial D$ and $\partial D_1$ are bounded by a constant independent of $\varepsilon$. 
	
	Let
	$$\Psi:=\Big\{{\boldsymbol\psi}\in C^{1}(\mathbb{R}^{d};\mathbb{R}^{d})~|~e({\boldsymbol\psi}):=\frac{1}{2}(\nabla{\boldsymbol\psi}+(\nabla{\boldsymbol\psi})^{\mathrm{T}})=0\Big\},$$
	be the linear space of rigid displacements in $\mathbb{R}^{d}$. It is well known that
	$$\{\boldsymbol{e}_{i},~x_{k}\boldsymbol{e}_{j}-x_{j}\boldsymbol{e}_{k}~|~1\leq\,i\leq\,d,~1\leq\,j<k\leq\,d\}$$
	is a basis of $\Psi$, where $e_{1},\dots,e_{d}$ is the canonical basis of $\mathbb{R}^{d}$. Consider the following Dirichlet problem, describing one rigid particle suspending in the Stokes flow,
	\begin{align}\label{sto}
	\begin{cases}
	\mu \Delta {\bf u}=\nabla p,\quad\quad~~\nabla\cdot {\bf u}=0,&\hbox{in}~~\Omega:=D\setminus\overline{D}_{1},\\
	{\bf u}|_{+}={\bf u}|_{-},&\hbox{on}~~\partial{D_{1}},\\
	e({\bf u})=0, &\hbox{in}~~D_{1},\\
	\int_{\partial{D}_{1}}\frac{\partial {\bf u}}{\partial \nu}\Big|_{+}\cdot{\boldsymbol\psi}_{\alpha}-\int_{\partial{D}_{1}}p\,{\boldsymbol\psi}_{\alpha}\cdot
	\nu=0, &\alpha=1,2,\dots,\frac{d(d+1)}{2},
	\\
	{\bf u}={\boldsymbol\varphi}, &\hbox{on}~~\partial{D},
	\end{cases}
	\end{align}
	where $\mu>0$, ${\boldsymbol\psi}_{\alpha}\in\Psi$, 
	$\frac{\partial {\bf u}}{\partial \nu}\big|_{+}:=\mu(\nabla {\bf u}+(\nabla {\bf u})^{\mathrm{T}})\nu$, $\nu$ is the unit outward normal vector of $D_{1}$, and the subscript $+$ indicates the limit from inside the domain. Furthermore, by using the Gauss theorem and $\nabla\cdot{\bf u}=0$ in $\Omega$, we know the prescribed velocity field ${\boldsymbol\varphi}$ satisfies the compatibility condition:
	\begin{equation}\label{compatibility}
	\int_{\partial{D}}{\boldsymbol\varphi}\cdot \nu\,=0,
	\end{equation}
	which ensures the existence and uniqueness of the solution. See \cite{ADN,Solo,GaldiBook}.
	
	In order to formulate our results precisely,  we assume that near the origin the boundaries $\partial D_1$ and $\partial D$ are expressed by  the graphs of two functions, respectively. Namely, for some $0<R<1$,
	\begin{equation*}
	x_d=\varepsilon+h_1(x')\quad\text{and}\quad x_d=h(x'),\quad \text{for}~ |x'|\leq 2R,
	\end{equation*}
	where $h_1$ and $h$ are $C^{3}$ functions, satisfying 
	\begin{align*}
	&h(x') <\varepsilon+h_{1}(x'),\quad\mbox{for}~~ |x'|\leq 2R,\\
	h_{1}(0)&=h(0)=0,\quad \nabla_{x'} h_{1}(0)=\nabla_{x'}h(0)=0,\\
	h_{1}(x')=\kappa_1|x'|^{2}+&O(|x'|^{3}),\quad h(x')=\kappa|x'|^{2}+O(|x'|^{3}),\quad\mbox{for}~~|x'|<2R,
	\end{align*}
	where we assume that the constants $\kappa_1>\kappa$. For $0\leq r\leq 2R$, we denote the neck region
	\begin{equation}\label{narrow-region}
	\Omega_r:=\left\{(x',x_{d})\in \Omega:~ h(x')<x_{d}<\varepsilon+h_1(x'),~|x'|<r\right\}.
	\end{equation} 
	Throughout this paper, $C$ denotes a {\em{universal constant}}, which means the value of $C$ may vary from line to line, depends only on $d,\mu,\kappa_1,\kappa$, and the upper bounds of the $C^{3}$ norm of $\partial{D}$, $\partial{D}_{1}$, but independent of $\varepsilon$.
	
	\subsection{Upper Bounds of $|\nabla{\bf u}|$ and $|p|$ in 3D}\label{secmain}
	Let  $({\bf u},p)$ be a pair of solution to \eqref{sto}.  We introduce the Cauchy stress tensor
	\begin{equation}\label{defsigma}
	\sigma[{\bf u},p]=2\mu e({\bf u})-p\mathbb{I},
	\end{equation}
	where $\mathbb{I}$ is the identity matrix. Then we reformulate \eqref{sto} as
	\begin{align}\label{Stokessys}
	\begin{cases}
	\nabla\cdot\sigma[{\bf u},p]=0,~~\nabla\cdot {\bf u}=0,&\hbox{in}~~\Omega,\\
	{\bf u}|_{+}={\bf u}|_{-},&\hbox{on}~~\partial{D_{1}},\\
	e({\bf u})=0, &\hbox{in}~~D_{1},\\
	\int_{\partial{D}_{1}}\sigma[{\bf u},p]\cdot{\boldsymbol\psi}_{\alpha}
	\nu=0
	,&\alpha=1,\dots,6,\\
	{\bf u}={\boldsymbol\varphi}, &\hbox{on}~~\partial{D}.
	\end{cases}
	\end{align}
	
	As in \cite{BJL,LXZ}, by the third line in \eqref {Stokessys}, we decompose $\bf u$ in $D_{1}$ as
	\begin{equation*}
	{\bf u}=\sum_{\alpha=1}^{6}C^{\alpha}{\boldsymbol\psi}_{\alpha},\quad\mbox{where}~{\boldsymbol\psi}_{\alpha}\in\Psi,\quad\mbox{in}~{D}_1,
	\end{equation*} 
	where $C^\alpha$, $\alpha=1,2,\dots,6$, are some free constants to be determined by the fourth line in \eqref {Stokessys} later. By virtue of the continuity of the transmission condition on $\partial{D}_{1}$, then $({\bf u},p)$ in $\Omega$ can be split as follows:
	\begin{align}\label{udecom}
	{\bf u}(x)&=\sum_{\alpha=1}^{6}C^{\alpha}{\bf u}_{\alpha}(x)+{\bf u}_{0}(x)\quad\mbox{and}\quad p(x)=\sum_{\alpha=1}^{6}C^{\alpha}p_{\alpha}(x)+p_{0}(x),
	\end{align}
	where ${\bf u}_{\alpha},{\bf u}_{0}\in{C}^{2}(\Omega;\mathbb R^3),~p_{\alpha}, p_0\in{C}^{1}(\Omega)$ satisfy, respectively,
	\begin{equation}\label{equ_v1}
	\begin{cases}
	\nabla\cdot\sigma[{\bf u}_\alpha,p_{\alpha}]=0,\quad\nabla\cdot {\bf u}_{\alpha}=0&\mathrm{in}~\Omega,\\
	{\bf u}_{\alpha}={\boldsymbol\psi}_{\alpha}&\mathrm{on}~\partial{D}_{1},\\
	{\bf u}_{\alpha}=0&\mathrm{on}~\partial{D},
	\end{cases}\quad\alpha=1,2,\dots,6,
	\end{equation}
	and
	\begin{equation}\label{equ_v3}
	\begin{cases}
	\nabla\cdot\sigma[{\bf u}_{0},p_0]=0,\quad\nabla\cdot {\bf u}_{0}=0&\mathrm{in}~\Omega,\\
	{\bf u}_{0}=0&\mathrm{on}~\partial{{D}_{1}},\\
	{\bf u}_{0}={\boldsymbol\varphi}&\mathrm{on}~\partial{D}.
	\end{cases}
	\end{equation}
	Thus,
	\begin{equation}\label{equ_v4}
	\nabla{\bf u}=\sum_{\alpha=1}^{6}C^\alpha\nabla {\bf u}_\alpha+\nabla {\bf u}_0\quad\mathrm{in}~\Omega.
	\end{equation}
	
	For ${\bf u}_\alpha$, $\alpha=1,2,\dots,6$, we will construct auxiliary functions ${\bf v}_{\alpha}$ having the same boundary conditions as ${\bf u}_{\alpha}$ and the associated auxiliary functions ${\bar p}_\alpha$, and then apply the iteration approach as in \cite{BLL,BLL2,LX,LXZ} to prove $\nabla{\bf v}_{\alpha}$ being the main singular terms of $\nabla{\bf u}_{\alpha}$, see Proposition \ref{propu11} below. However, in order to investigate the effect from the boundary data ${\boldsymbol\varphi}$, from \eqref{equ_v3}, it is also important to derive the estimate of $|\nabla{\bf u}_0|$. Let
	\begin{equation*}
	{\bf\Phi}:=\left\{\boldsymbol\varphi\in C^{2,\alpha}(\partial D;\mathbb R^3)~|~\boldsymbol\varphi~\mbox{satisfies}~ \eqref{compatibility}\right\},
	\end{equation*}
	and 
	\begin{equation*}
	\Gamma_{2R}=\left\{(x^\prime,x_{3})\in \Omega:~ x_{3}=h(x^\prime),~|x^\prime|<2R\right\}\subset\partial{D}.
	\end{equation*}
	
	Since the gap is narrowest near the origin, the stress may concentrate there. According to the Taylor expansion of ${\boldsymbol\varphi}$ at the
	origin, we consider two kinds of boundary data: being locally constant on $\Gamma_{2R}$ and being locally polynomial on $\Gamma_{2R}$. 
	Specifically, 
	\begin{equation}\label{defphi1}
	{\bf\Phi}_{1}:=\left\{{\boldsymbol\varphi}\in{\bf\Phi}~|~{\boldsymbol\varphi}=\begin{pmatrix}
	x_i^{l_1}\\
	0\\
	0
	\end{pmatrix},~\mbox{or}~{\boldsymbol\varphi}=\begin{pmatrix}
	0\\
	x_i^{l_1}\\
	0
	\end{pmatrix},~i=1,2,~\mbox{on}~\Gamma_{2R}\right\},
	\end{equation}
	\begin{equation}\label{defphi2}
	{\bf\Phi}_{2}:=\left\{\boldsymbol\varphi\in{\bf\Phi}~|~{\boldsymbol\varphi}=
	\begin{pmatrix}
	0\\
	0\\
	x_i^{l_2}
	\end{pmatrix},~i=1,2,~\mbox{on}~\Gamma_{2R}\right\},
	\end{equation}
	and
	\begin{equation}\label{defphi3}
	{\bf\Phi}_{3}:=\left\{\boldsymbol\varphi\in{\bf\Phi}~|~{\boldsymbol\varphi}=
	\begin{pmatrix}
	x_3^{l_3}\\
	0\\
	0
	\end{pmatrix},~\mbox{or}~{\boldsymbol\varphi}=\begin{pmatrix}
	0\\
	x_3^{l_3}\\
	0
	\end{pmatrix},~\mbox{or}~{\boldsymbol\varphi}=\begin{pmatrix}
	0\\
	0\\
	x_3^{l_3}
	\end{pmatrix},~\mbox{on}~\Gamma_{2R}\right\},
	\end{equation}
	where $l_1=0$ and $l_2=0$ corresponds to the constant case, and $l_1,l_2,l_3\in\mathbb N^+$ corresponds to the polynomial case. We have to construct an auxiliary function ${\bf v}_{0}$ with the same boundary condition as ${\bf u}_{0}$ for each $\boldsymbol\varphi\in{\bf\Phi}_{i}$, $i=1,2,3$, to capture the main singularity of $|\nabla{\bf u}_0|$. The constructions become more complicated and several new difficulties need to overcome. See Propositions \ref{propu15}--\ref{propu162}.
	
	To estimate $C^\alpha$ in \eqref{udecom}, $\alpha=1,2,\dots,6$, from the fourth line of \eqref{Stokessys} and \eqref{equ_v4}, we need to solve the linear system  
	\begin{equation}\label{ce}
	\sum\limits_{\alpha=1}^{6}C^{\alpha}a_{\alpha\beta}=Q_{\beta}[{\boldsymbol\varphi}],
	\end{equation}
	where
	\begin{align}\label{aijbj}
	a_{\alpha\beta}=-\int_{\partial D_1}{\boldsymbol\psi}_\beta\cdot\sigma[{\bf u}_{\alpha},p_{\alpha}]\nu,\quad
	Q_{\beta}[{\boldsymbol\varphi}]=
	\int_{\partial D_1}{\boldsymbol\psi}_\beta\cdot\sigma[{\bf u}_{0},p_{0}]\nu.
	\end{align}
	To this end, here we need to derive their explicit asymptotic expansions of $a_{\alpha\beta}$ and $Q_\beta[\boldsymbol\varphi]$, instead of upper and lower bounds estimates as before. See Lemmas \ref{lema113D}--\ref{lemaQb3} for details. 
	
	Denote
	\begin{equation}\label{55}
	\delta(x^{\prime})=\varepsilon+(\kappa_1-\kappa)|x^{\prime}|^2.
	\end{equation}
	Then we have the following main results on the upper bounds. 
	
	\begin{theorem}\label{mainthm0}
		Assume that $D,D_1,\Omega$, and $\varepsilon$ are defined as in Section \ref{subsec1.1}. For  $\boldsymbol\varphi\in{\bf\Phi}_i$, $i=1,2,3$, let ${\bf u}\in H^1(D;\mathbb R^3)\cap C^1(\bar{\Omega};\mathbb R^3)$ and $p\in L^2(D)\cap C^0(\bar{\Omega})$ be the solution to \eqref{Stokessys}. Then for sufficiently small $0<\varepsilon<1$, the following assertions hold:
		
		(i) for $\boldsymbol\varphi\in{\bf\Phi}_1$, 
		\begin{align}\label{estDu}
		|\nabla{{\bf u}}(x)|\leq \frac{C(1+|\ln\varepsilon||x'|)}{|\ln\varepsilon|\delta(x')},~x\in\Omega_R,
		\end{align}
		\begin{align}\label{D2uDp}
		|\nabla^2{{\bf u}}(x)|+|\nabla p(x)|\leq
		\frac{C(1+|\ln\varepsilon|\sqrt{\delta(x')})}{|\ln\varepsilon|\delta^2(x')},~x\in\Omega_R,
		\end{align}
		and
		\begin{align*}
		\inf_{c\in\mathbb{R}}\|p+c\|_{C^0(\bar{\Omega}_{R})}\leq
		\begin{cases}
		\frac{C}{|\ln\varepsilon|\varepsilon^{3/2}},&  l_1=0,~ l_1\geq2,\\
		\frac{C}{\varepsilon^{3/2}},&  l_1=1,
		\end{cases} 
		\end{align*}
		where $C$ is a {\em{universal} constant} and $\delta(x')$ is defined in \eqref{55};
		
		(ii) for $\boldsymbol\varphi\in{\bf\Phi}_2$,  
		\begin{align*}
		|\nabla{{\bf u}}(x)|\leq
		\begin{cases} \frac{C(1+|\ln\varepsilon||x'|)}{|\ln\varepsilon|\delta(x')},&l_2=0,~ l_2\geq2,\\
		\frac{C}{\delta(x')},& l_2= 1,
		\end{cases}~x\in\Omega_R,
		\end{align*}
		\begin{align*}
		|\nabla^2{{\bf u}}(x)|+|\nabla p(x)|\leq
		\begin{cases} 
		\frac{C(1+|\ln\varepsilon|\sqrt{\delta(x')})}{|\ln\varepsilon|\delta^2(x')},&l_2=0,~l_2\geq2,\\
		\frac{C}{\delta^2(x')},&l_2=1,
		\end{cases}~x\in\Omega_R,
		\end{align*}
		and
		\begin{align*}
		\inf_{c\in\mathbb{R}}\|p+c\|_{C^0(\bar{\Omega}_{R})}\leq
		\begin{cases}
		\frac{C}{|\ln\varepsilon|\varepsilon^{3/2}},&  l_2=0,\\
		\frac{C}{\varepsilon^{3/2}},& l_2\geq1;
		\end{cases} 
		\end{align*}

		(iii) for $\boldsymbol\varphi\in{\bf\Phi}_3$, \eqref{estDu} and \eqref{D2uDp} also hold, and 
		\begin{align*}
		\inf_{c\in\mathbb{R}}\|p+c\|_{C^0(\bar{\Omega}_{R})}\leq
		\frac{C}{|\ln\varepsilon|\varepsilon^{3/2}};
		\end{align*}
		and outside the narrow region, we have
		$$\|\nabla^{k_1}{\bf u}\|_{L^\infty(\Omega\setminus\Omega_R)}+\|\nabla^{k_2} p\|_{L^\infty(\Omega\setminus\Omega_R)}\leq C\|\boldsymbol\varphi\|_{C^{2,\alpha}(\partial D)},\quad k_1=1,2,~k_2=0,1.$$
	\end{theorem}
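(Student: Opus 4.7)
The plan is to follow the decomposition-and-assembly strategy set up in the preceding subsection: split $(\mathbf{u},p)$ according to \eqref{udecom}, control each piece $(\mathbf{u}_\alpha,p_\alpha)$ and $(\mathbf{u}_0,p_0)$ separately via the auxiliary-function method, then determine the free constants $C^\alpha$ from the linear system \eqref{ce} and collect the estimates. Concretely, I would first invoke the pointwise bounds produced by Proposition \ref{propu11} for the six rigid-motion pieces $\nabla\mathbf{u}_\alpha$, $\nabla^2\mathbf{u}_\alpha$, $\nabla p_\alpha$, which already exhibit the characteristic $\delta(x')^{-1}$, $\delta(x')^{-2}$, and $|\ln\varepsilon|\,\varepsilon^{-3/2}$ behavior in three dimensions. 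Simultaneously I would use the corresponding propositions \ref{propu15}--\ref{propu162} to bound the zero-order piece $(\mathbf{u}_0,p_0)$ by comparison with an auxiliary function $\mathbf{v}_0$ tailored to each class $\boldsymbol{\Phi}_i$ (locally constant, locally polynomial in $x_1,x_2$, or polynomial in $x_3$); this is where the three separate case distinctions in the statement originate.

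Next, I would analyze the coefficients $C^\alpha$. By \eqref{ce} they satisfy $\sum_\alpha C^\alpha a_{\alpha\beta}=Q_\beta[\boldsymbol{\varphi}]$, and using the explicit asymptotic expansions of the matrix $(a_{\alpha\beta})$ from Lemmas \ref{lema113D}--\ref{lemaQb3} (for the RHS) together with the symmetry $a_{\alpha\beta}=a_{\beta\alpha}$, the leading singular blocks of $(a_{\alpha\beta})$ are diagonal-dominant to the order $\varepsilon^{-1/2}$ or $|\ln\varepsilon|$ depending on the index. Inverting this structure by Cramer's rule yields $|C^\alpha|\le C$ (or a logarithmically refined constant) uniformly in $\varepsilon$, with some specific improvements for the cases $l_1=0,l_1\ge 2$ (resp.\ $l_2=0,l_2\ge 2$) that come from cancellations between $Q_\beta[\boldsymbol{\varphi}]$ and the symmetric entries of $(a_{\alpha\beta})$. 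Plugging these bounds into \eqref{equ_v4} and combining with the estimates of Step 1 gives the stated pointwise inequalities for $|\nabla\mathbf{u}|$, $|\nabla^2\mathbf{u}|$, and $|\nabla p|$ in $\Omega_R$; the factor $1+|\ln\varepsilon||x'|$ arises from the two distinct blow-up profiles of the rigid-translation pieces versus the rigid-rotation pieces.

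For the $L^\infty$ bound on the pressure, the plan is to integrate $\nabla p$ along a path from $x\in\Omega_R$ to a point outside $\Omega_R$ and add a constant absorbing the exterior value. The integration of the gradient bound $|\nabla p|\lesssim |\ln\varepsilon|^{-1}\delta^{-2}$ over $|x'|\lesssim \sqrt{\varepsilon}$ along the axis produces the $\varepsilon^{-3/2}$ rate, with the $|\ln\varepsilon|^{-1}$ improvement surviving exactly in those cases in which the leading singularity of $p_0$ is absorbed by the symmetry of the auxiliary construction (namely $l_1=0$ or $l_1\ge 2$ for $\boldsymbol{\Phi}_1$, $l_2=0$ for $\boldsymbol{\Phi}_2$, and all $l_3$ for $\boldsymbol{\Phi}_3$), and lost when the leading parity matches a rigid translation (the $l_1=1$, $l_2\ge 1$ cases). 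Finally, the bounds outside $\Omega_R$ follow from standard interior and boundary regularity for the Stokes system, since in $\Omega\setminus\Omega_R$ the distance between $\partial D_1$ and $\partial D$ is bounded below by a universal constant and the data is $C^{2,\alpha}$.

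The main obstacle I anticipate is not any single estimate but the bookkeeping of the many sub-cases: one has to track, for each index $\alpha\in\{1,\dots,6\}$, whether $C^\alpha$ carries a $|\ln\varepsilon|^{-1}$ gain and whether its companion $\nabla\mathbf{u}_\alpha$ has a rotational or translational profile, and then check that the product lands on the right side of the claimed bounds in every one of the three boundary data classes. Matching the sharp rate $\varepsilon^{-3/2}$ for $\|p\|_{C^0}$ rather than an $\varepsilon^{-3/2}|\ln\varepsilon|^{-1}$ or $\varepsilon^{-2}$ rate requires exploiting the cancellation between $\sum_\alpha C^\alpha p_\alpha$ and $p_0$ near the origin, which is precisely what the explicit expansions in Lemmas \ref{lema113D}--\ref{lemaQb3} are designed to make transparent.
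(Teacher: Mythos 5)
Your outline follows the paper's own strategy almost exactly: decompose via \eqref{udecom}, bound $(\nabla{\bf u}_\alpha,p_\alpha)$ and $(\nabla{\bf u}_0,p_0)$ by Propositions \ref{propu11} and \ref{propu15}--\ref{propu162}, estimate the $C^\alpha$ from \eqref{ce} via Cramer's rule and the expansions of Lemmas \ref{lema113D}--\ref{lemaQb3}, and reassemble; the pairing of the $\alpha=1$ (resp.\ $\alpha=3$) piece with the ${\bf u}_0$ piece in the locally constant cases, which you describe as a cancellation forcing $C^1\to 1$, is precisely how the paper handles $l_1=0$ and $l_2=0$. The one genuine methodological difference is the pressure sup-norm: you integrate the already-established bound on $|\nabla p|$ along a radial path through the neck, whereas the paper bounds $|p_\alpha-(q_\alpha)_{\Omega_R}|$ term by term using the explicit auxiliary pressures $\bar p_\alpha\sim x_\alpha/\delta^2$ and the coefficient bounds. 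Your route is legitimate (the radial integral of $|\ln\varepsilon|^{-1}\delta^{-2}+\delta^{-3/2}$ is indeed $O(|\ln\varepsilon|^{-1}\varepsilon^{-3/2})$) and in fact yields bounds at least as strong as those stated.

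One point in your plan is wrong as stated and must be repaired: the matrix $(a_{\alpha\beta})$ is \emph{not} diagonally dominant in its leading block. By Lemma \ref{lema113D} the off-diagonal entries $a_{15}$ and $a_{26}$ are of the same order $|\ln\varepsilon|$ as $a_{11},a_{22},a_{55},a_{66}$, so invertibility cannot be read off from dominance; one must compute the determinant of the $\{1,2,5,6\}$ block explicitly and observe the exact cancellation $(a+b)^2b^2+b^4-2(a+b)b^3=a^2b^2>0$ (the paper's \eqref{cofa34}--\eqref{deta}), which gives $\det\mathbb{A}\gtrsim|\ln\varepsilon|^4/\varepsilon$. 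The same computation is what produces the precise cofactor ratios $\mbox{cof}(\mathbb A)_{11}/\det\mathbb{A}$ and $\mbox{cof}(\mathbb A)_{51}/\det\mathbb{A}$ needed to conclude $C^1=1+O(|\ln\varepsilon|^{-1})$ rather than merely $|C^1|\le C$. Also, the singular diagonal order for $\alpha=3$ is $\varepsilon^{-1}$, not $\varepsilon^{-1/2}$. With these corrections your argument closes.
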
  
	
	\begin{remark}
		In particular, we have
		\begin{equation*}
		\|\nabla{{\bf u}}\|_{L^{\infty}(\Omega)}\leq  
		\begin{cases}
		\frac{1}{\varepsilon|\ln\varepsilon|},&\boldsymbol\varphi\in{\bf\Phi}_i,~i=1,3;~\mbox{or}~\boldsymbol\varphi\in{\bf\Phi}_2,~l_2=0,~l_2\geq 2,\\
		\frac{C}{\varepsilon},&\boldsymbol\varphi\in{\bf\Phi}_2,~l_2=1.
		\end{cases}
		\end{equation*}
		Although the above results for $\boldsymbol\varphi\in{\bf\Phi}_1$ and $\boldsymbol\varphi\in{\bf\Phi}_3$ look the same, the construction of the auxiliary functions are totally different, see Section \ref{sec2outline2D}. 
	\end{remark}

	\begin{remark}
		Our proof also works for more general convex particles only with a minor modification. For example, when
		$$h_1(x)=\kappa_1x_1^2+\kappa_2x_2^2,\quad h(x)=\kappa|x'|^2,\quad\kappa_1,\kappa_2>\kappa,\quad\mbox{in}~\Omega_{2R},$$
		then $\delta(x^{\prime})=\varepsilon+(\kappa_1-\kappa)x_1^2+(\kappa_2-\kappa)x_2^2
		$. The details are given in Section \ref{secellipsoid}. The argument in the proof of Theorem \ref{mainthm0}  works well for the case of $m$-convex inclusions, say, $h_1(x)=\kappa_1|x'|^{m}$ and $h(x)=\kappa|x'|^m$ with $m\geq3$, by modifying the auxiliary functions in Section \ref{sec2outline2D} slightly. We leave the details to the interested readers.
	\end{remark}   
	
	\subsection{Lower Bounds of $|\nabla {\bf u}|$ in 3D} 
	Notice that the upper bounds of $|\nabla{\bf u}|$ established in Theorem \ref{mainthm0}  achieve their  maximum at the shortest line $\{|x'|=0\}\cap\Omega$. We shall prove the lower bounds of $|\nabla {\bf u}|$ at the midpoint of the shortest line in order to show the optimality of the blow-up rate. Assume that $({\bf u}_0^{*},p_0^{*})$ and $({\bf u}_\beta^*,p_\beta^*)$, $\beta=1,\dots,6$, satisfy 
	\begin{align*}
	\begin{cases}
	\nabla\cdot\sigma[{\bf u}_0^{*},p_0^{*}]=0,&\hbox{in}\ \Omega^{0},\\
	\nabla\cdot {\bf u}_0^{ *}=0,&\hbox{in}\ \Omega^{0}\\
	{\bf u}_0^{*}=0,&\hbox{on}\ \partial D_{1}^{0},\\
	{\bf u}_0^{*}={\boldsymbol\varphi},&\hbox{on}\ \partial{D},
	\end{cases}\quad
	\begin{cases}
	\nabla\cdot\sigma[{\bf u}_\beta^*,p_\beta^*]=0,&\mathrm{in}~\Omega^{0},\\
	\nabla\cdot {\bf u}_\beta^*=0,&\mathrm{in}~\Omega^{0},\\
	{\bf u}_\beta^*={\boldsymbol\psi}_{\beta},&\mathrm{on}~\partial{D}_{1}^{0}\setminus\{0\},\\
	{\bf u}_\beta^*=0,&\mathrm{on}~\partial{D},
	\end{cases}
	\end{align*} 
	respectively. 
	
	For  the locally constant boundary data ${\boldsymbol\varphi}\in{\bf\Phi}_{1}$ with $l_1=0$, it follows from Lemma \ref{lemaQb1} that $Q_{\beta}[{\boldsymbol\varphi}]$, defined by \eqref{aijbj}, goes to infinity as $\varepsilon\rightarrow0$, $\beta=1$ or $\beta=5$. For this, we define a new functional by
	\begin{align}\label{defQj}
	Q_{1,\beta}[{\boldsymbol\varphi}]:=Q_{\beta}[{\boldsymbol\varphi}]-a_{1\beta}=\int_{\partial D_{1}}{\boldsymbol\psi}_\beta\cdot\sigma[{\bf u}_0+{\bf u}_{1},p_0+p_1]\nu,\quad\beta=1,\dots,6,
	\end{align}
	where $a_{1\beta}$ is defined by \eqref{aijbj}. Then $|Q_{1,\beta}[{\boldsymbol\varphi}]|\leq C$ since ${\bf u}_0+{\bf u}_{1}$ takes the same data on $\partial D_1$ and $\partial D$. Moreover, we shall prove in Lemma \ref{lemmatildeQ} that $Q_{1,\beta}[{\boldsymbol\varphi}]$ converges to  
	\begin{align}\label{defQj*}
	Q^*_{1,\beta}[{\boldsymbol\varphi}]:=\int_{\partial D_{1}^{0}}{\boldsymbol\psi}_\beta\cdot\sigma[{\bf u}_0^{*}+{\bf u}_{1}^*,p_0^{ *}+p_1^{ *}]\nu,\quad\beta=1,\dots,6.
	\end{align} 
	Assume that ${\boldsymbol\varphi}\in  C^{1,\alpha}(\partial D;\mathbb R^3)$ satisfies the following condition:
	\begin{align*}
	({\mathscr{H}}): &~Q_{1,1}^*[{\boldsymbol\varphi}]-(\kappa_1+\kappa) Q_{1,5}^*[{\boldsymbol\varphi}]-\frac{a^*_{14}-(\kappa_1+\kappa)a^*_{54}}{a^*_{44}}Q^*_{1,4}[{\boldsymbol\varphi}]\neq 0\\
	&~\mbox{for~some}~{\boldsymbol\varphi}\in{\bf\Phi}_{1}~\mbox{with~}l_1=0,
	\end{align*}
	where 
	\begin{equation}\label{equ_a*}
	a^*_{\alpha\beta}:=\int_{\partial D_{1}^{0}}{\boldsymbol\psi}_\beta\cdot\sigma[{\bf u}_\alpha^{*},p_\alpha^{ *}]\nu,\quad\alpha,\beta=1,\dots,6.
	\end{equation}
	Then we have the lower bound as follows.
	
	\begin{theorem}\label{main1thm02}
		Assume that $D,D_1,\Omega$, and $\varepsilon$ are defined as in Section \ref{subsec1.1}. Let ${\bf u}\in H^1(D;\mathbb R^3)\cap C^1(\bar{\Omega};\mathbb R^3)$ and $p\in L^2(D)\cap C^0(\bar{\Omega})$ be the solution to \eqref{Stokessys}. 
		If ${\boldsymbol\varphi}$ satisfies $({\mathscr{H}})$, then for sufficiently small $\varepsilon>0$, we have 
		\begin{align*}
		|\nabla{\bf u}(0,0,\varepsilon/2)|\geq\frac{1}{\varepsilon|\ln\varepsilon|}.
		\end{align*}
	\end{theorem}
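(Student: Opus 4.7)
My plan is to evaluate the decomposition \eqref{udecom} at the midpoint and extract the leading singular behavior. Writing
$$\nabla {\bf u}(0,0,\varepsilon/2) = \sum_{\alpha=1}^6 C^{\alpha} \nabla {\bf u}_{\alpha}(0,0,\varepsilon/2) + \nabla {\bf u}_{0}(0,0,\varepsilon/2),$$
I first perform the substitution $\tilde C^{1} := C^{1} - 1$, $\tilde C^{\alpha} := C^{\alpha}$ for $\alpha \geq 2$, which is precisely the trick behind the definition \eqref{defQj}: subtracting $a_{1\beta}$ from both sides of the linear system \eqref{ce} converts it to
$$\sum_{\alpha=1}^6 \tilde C^{\alpha} a_{\alpha\beta} = Q_{1,\beta}[{\boldsymbol\varphi}], \qquad \beta = 1,\dots,6,$$
whose right-hand side is now bounded and, by Lemma \ref{lemmatildeQ}, converges to $Q_{1,\beta}^{*}[{\boldsymbol\varphi}]$. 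In parallel, ${\bf u} = \sum_{\alpha} \tilde C^{\alpha} {\bf u}_{\alpha} + ({\bf u}_0 + {\bf u}_{1})$; since for ${\boldsymbol\varphi} \in {\bf\Phi}_1$ with $l_1=0$ the combination ${\bf u}_0 + {\bf u}_{1} - {\bf e}_1$ has zero Dirichlet data on $\partial D_1$ and on $\Gamma_{2R} \subset \partial D$, the standard interior/boundary Stokes regularity cited from \cite[Proposition 3.6]{LX} yields $|\nabla({\bf u}_0 + {\bf u}_{1})(0,0,\varepsilon/2)| \leq C$. So the midpoint evaluation reduces to controlling $\sum_{\alpha} \tilde C^{\alpha} \nabla {\bf u}_{\alpha}(0,0,\varepsilon/2)$.

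Next I insert the explicit asymptotic expansions of $a_{\alpha\beta}$ from Lemma \ref{lema113D} to invert the $6 \times 6$ matrix. The singular entries concentrate in the $\{1,5\}$ block (shearing $x_1$--translation coupled with the rotation ${\boldsymbol\psi}_5 = x_3 {\bf e}_1 - x_1 {\bf e}_3$) and by symmetry in the $\{2,4\}$ block, all of order $|\ln\varepsilon|$; the other entries are $O(1)$. The key structural fact is that inside the $\{1,5\}$ block, the leading $|\ln\varepsilon|$-parts are rank one: the dominant contribution of ${\boldsymbol\psi}_5$ on $\partial D_1 \cap \Omega_R$ coincides with $-(\kappa_1+\kappa)\,{\boldsymbol\psi}_1$ up to lower-order terms (because $x_3 \approx \varepsilon + \kappa_1 |x'|^2$ on $\partial D_1$ and $\approx \kappa|x'|^2$ on $\partial D$, and the singular energy integrals project onto this difference), giving $a_{15}/a_{11}, a_{55}/a_{15} \to -(\kappa_1+\kappa)$ at leading order. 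Hence the $\{1,5\}$ block has a one-dimensional leading kernel spanned by $(1,-(\kappa_1+\kappa)^{-1})$, and after eliminating $\tilde C^{3},\tilde C^{6}$ (which couple only to $O(1)$ entries) and solving the $\beta=4$ equation for $\tilde C^{4}$ in terms of the remaining variables via the bounded leading coefficient $a^{*}_{44}$, the reduced system becomes a single scalar equation in the direction transverse to this kernel. Its right-hand side is exactly the quantity
$$Q^{*}_{1,1}[{\boldsymbol\varphi}] - (\kappa_1+\kappa) Q^{*}_{1,5}[{\boldsymbol\varphi}] - \frac{a^{*}_{14} - (\kappa_1+\kappa) a^{*}_{54}}{a^{*}_{44}} Q^{*}_{1,4}[{\boldsymbol\varphi}],$$
appearing in hypothesis $({\mathscr{H}})$, and its coefficient is of order $|\ln\varepsilon|$, so hypothesis $({\mathscr{H}})$ forces $\tilde C^{1} - (\kappa_1+\kappa)^{-1} \tilde C^{5}$ to be bounded below by a nonzero constant divided by $|\ln\varepsilon|$.

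Finally, Proposition \ref{propu11} supplies the pointwise asymptotic expansions $\nabla {\bf u}_{1}(0,0,\varepsilon/2)$ and $\nabla {\bf u}_{5}(0,0,\varepsilon/2)$, both of whose dominant shear components scale like $1/\varepsilon$ and are linearly independent after the kernel projection above; combining with the leading size $|\ln\varepsilon|^{-1}$ of the kernel-direction part of $\tilde C$ produces
$$|\nabla {\bf u}(0,0,\varepsilon/2)| \;\geq\; \frac{c}{\varepsilon|\ln\varepsilon|}\,\Big|Q^{*}_{1,1} - (\kappa_1+\kappa) Q^{*}_{1,5} - \tfrac{a^{*}_{14} - (\kappa_1+\kappa) a^{*}_{54}}{a^{*}_{44}} Q^{*}_{1,4}\Big|,$$
which is nonzero by $({\mathscr{H}})$. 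The main obstacle is the middle step: controlling the nearly singular $6 \times 6$ inversion precisely enough to read off the leading projection onto the kernel direction. A naive application of the upper bounds on $|a_{\alpha\beta}|$ and $|Q_{1,\beta}|$ only reproduces the upper bound from Theorem \ref{mainthm0}; the lower bound requires that the $|\ln\varepsilon|$- and $O(1)$-parts of every entry be tracked simultaneously, which is exactly what the refined expansions in Lemmas \ref{lema113D}--\ref{lemaQb3} are designed to deliver.
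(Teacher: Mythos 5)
Your overall skeleton --- replacing $C^1$ by $\tilde C^1=C^1-1$ so that the system \eqref{ce} has the bounded right-hand side $Q_{1,\beta}[{\boldsymbol\varphi}]$, using the cancellation in ${\bf u}_0+{\bf u}_1$ at the midpoint, and reducing everything to a lower bound on the constants via Cramer's rule --- is the same as the paper's. But your middle step rests on a structural claim that is false, and your final step would not close even if it were true. The leading $|\ln\varepsilon|$-part of the $\{1,5\}$ block is \emph{not} rank one: with $a=\frac{1}{\kappa_1-\kappa}$ and $b=\frac{(\kappa_1+\kappa)^2}{(\kappa_1-\kappa)^3}K(\kappa_1,\kappa)$, Lemma \ref{lema113D} gives
\begin{align*}
\begin{pmatrix}a_{11}&a_{15}\\a_{51}&a_{55}\end{pmatrix}
=\mu\pi|\ln\varepsilon|\begin{pmatrix}a+b&\tfrac{b}{\kappa_1+\kappa}\\[2pt] \tfrac{b}{\kappa_1+\kappa}&\tfrac{b}{(\kappa_1+\kappa)^2}\end{pmatrix}+O(1),
\end{align*}
whose leading determinant equals $(\mu\pi)^2ab(\kappa_1+\kappa)^{-2}|\ln\varepsilon|^2>0$; this nondegeneracy is precisely what \eqref{cofa34}--\eqref{deta} record. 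Your claimed limits $a_{15}/a_{11},\,a_{55}/a_{15}\to-(\kappa_1+\kappa)$ are both wrong (for instance $a_{55}/a_{15}\to(\kappa_1+\kappa)^{-1}$, and all these entries are positive), so there is no one-dimensional leading kernel, and the inversion does \emph{not} reduce to a single scalar equation for $\tilde C^1-(\kappa_1+\kappa)^{-1}\tilde C^5$. The correct outcome (the paper's \eqref{estC11}) is that $\tilde C^1$ \emph{itself} equals $\frac{\kappa_1-\kappa}{\mu\pi|\ln\varepsilon|}$ times the expression appearing in $({\mathscr{H}})$, up to $O(|\ln\varepsilon|^{-2})$: the combination $Q_{1,1}-(\kappa_1+\kappa)Q_{1,5}-\cdots$ lives on the $Q$-side of the system, not the $\tilde C$-side.

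This matters because your final step then fails quantitatively: at the midpoint one has $\nabla{\bf u}_5(0,0,\varepsilon/2)=O(1)$, not $O(1/\varepsilon)$. The rigid motion ${\boldsymbol\psi}_5=x_3e_1-x_1e_3$ nearly vanishes on the shortest segment, so the singular product ${\boldsymbol\psi}_5\otimes\nabla k$ is of size $\frac{x_3}{\delta}+\frac{|x'|}{\delta}=O(1)$ at $x'=0$, $x_3=\varepsilon/2$ (a direct computation from \eqref{v5alpha} gives $\partial_{x_3}{\bf v}_5^{(1)}(0,0,\varepsilon/2)=2$, and Proposition \ref{propu11} controls the remainder). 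Hence $\tilde C^5\nabla{\bf u}_5$ contributes only $O(|\ln\varepsilon|^{-1})$ at the midpoint, the pair $(\nabla{\bf u}_1,\nabla{\bf u}_5)$ is not ``linearly independent at scale $1/\varepsilon$'' there, and a lower bound on $\tilde C^1-(\kappa_1+\kappa)^{-1}\tilde C^5$ alone is compatible with $\tilde C^1=0$, which would destroy the conclusion. What is actually needed --- and what the full Cramer expansion together with Lemmas \ref{lemmatildeQ} and \ref{convera} delivers --- is $|C^1-1|\geq c/|\ln\varepsilon|$ on its own, which then combines with $\partial_{x_3}{\bf u}_1^{(1)}(0,0,\varepsilon/2)=\varepsilon^{-1}+O(1)$ (and the boundedness of the remaining terms) to yield the stated lower bound.
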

	
	\begin{remark}
		Similarly, for ${\boldsymbol\varphi}\in{\bf\Phi}_{2}$ with $l_2=0$, we define for $\beta=1,\dots,6$,
		\begin{align*}
		Q_{2,\beta}[{\boldsymbol\varphi}]:=\int_{\partial D_{1}}{\boldsymbol\psi}_\beta\cdot\sigma[{\bf u}_0+{\bf u}_{2},p_0+p_2]\nu,~ Q^*_{2,\beta}[{\boldsymbol\varphi}]:=\int_{\partial D_{1}^{0}}{\boldsymbol\psi}_\beta\cdot\sigma[{\bf u}_0^{*}+{\bf u}_{2}^*,p_0^{ *}+p_2^{ *}]\nu.
		\end{align*}
		While, for the polynomial cases, $l_1\geq 1 $ and $l_2\geq 2$, we define
		\begin{align*}
		Q^*_{\beta}[{\boldsymbol\varphi}]:=\int_{\partial D_{1}^{0}}{\boldsymbol\psi}_\beta\cdot\sigma[{\bf u}_0^{*},p_0^{ *}]\nu,\quad\beta=1,\dots,6.
		\end{align*}
		By a similar proof as that of Lemma \ref{lemmatildeQ}, we have 
		$$Q_{2,\beta}[{\boldsymbol\varphi}]\rightarrow Q^*_{2,\beta}[{\boldsymbol\varphi}]~\mbox{and}~ Q_{\beta}[{\boldsymbol\varphi}]\rightarrow Q^*_{\beta}[{\boldsymbol\varphi}]\quad\mbox{as}~\varepsilon\rightarrow0.$$
		Then if ${\boldsymbol\varphi}\in  C^{1,\alpha}(\partial D;\mathbb R^3)$ satisfies one of the following conditions
		\begin{align*}
		({\mathscr{H}}_2):&~ Q_{2,1}^*[{\boldsymbol\varphi}]-(\kappa_1+\kappa)	 Q_{2,5}^*[{\boldsymbol\varphi}]-\frac{a^*_{14}-(\kappa_1+\kappa)a^*_{54}}{a^*_{44}}Q^*_{2,4}[{\boldsymbol\varphi}]\neq 0\\
		&~\mbox{for~some}~{\boldsymbol\varphi}\in{\bf\Phi}_{2}~\mbox{with~}l_2=0,
		\end{align*}
		or
		\begin{align*}
		({\mathscr{H}}_3):&~ Q_{1}^*[{\boldsymbol\varphi}]-(\kappa_1+\kappa)	 Q_{5}^*[{\boldsymbol\varphi}]-\frac{a^*_{14}-(\kappa_1+\kappa)a^*_{54}}{a^*_{44}}Q^*_{4}[{\boldsymbol\varphi}]\neq 0\qquad\\
		&~\mbox{for~some}~{\boldsymbol\varphi}\in{\bf\Phi}_{j}~\mbox{with~}l_1\geq 1,~l_2\geq 2,~j=1,2,
		\end{align*}
		the result in Theorem \ref{main1thm02} still holds true.
	\end{remark}
	
	\begin{remark}
		From the perspective of practical application, it is more important to investigate the hydrodynamic force and the hydrodynamic torque acting on the rigid particle $D_1$ defined by
		$${\bf F}=-\int_{\partial D_1}\sigma[{\bf u},p]\nu,\quad{\bf T}= \int_{\partial D_1}(x-x_{D_1})\times\sigma[{\bf u},p]\nu,$$ 
		where $\nu$ is the unit outer normal to $\partial D_1$ and $x_{D_1}$ is the center of the mass of $D_1$. Using the fourth line of \eqref{sto}, here we have ${\bf F}=0$. 
		While, as a consequence of Theorem \ref{mainthm0}, for  given boundary data $\boldsymbol\varphi\in{\bf\Phi}_i$, $i=1,2,3$, we have 
		$$|{\bf T}|\leq C,$$ 
		where the constant $C$ is independent of $\varepsilon$. 
	\end{remark}
	
	\subsection{Upper bounds of $|\nabla{\bf u}|$ and $p$ in dimensions $d\geq4$, and upper bounds of Cauchy stress tensor $\sigma[{\bf u},p]$}
	
	As mentioned before, our method works in any dimension. Next we will present the upper bounds of $|\nabla{\bf u}|$ and $p$ in dimensions $d\geq4$ when the boundary data $\boldsymbol\varphi=x_1^l e_1$ on $\Gamma_{2R}$. Other cases can be handled similarly, we leave it to interested readers.
	
	\begin{theorem}\label{thmhigher}
		Assume that $D,D_1,\Omega$, and $\varepsilon$ are defined as in Section \ref{subsec1.1}. For  $\boldsymbol\varphi=x_1^l e_1$,  let ${\bf u}\in H^1(D;\mathbb R^d)\cap C^1(\bar{\Omega};\mathbb R^d)$ and $p\in L^2(D)\cap C^0(\bar{\Omega})$ be the solution to \eqref{Stokessys}. Then for sufficiently small $0<\varepsilon<1$,  we have  
		\begin{align}\label{uppD4}
		|\nabla{{\bf u}}(x)|\leq \frac{C(\varepsilon+|x'|)}{\delta^2(x')},\quad x\in\Omega_R;\quad\inf_{c\in\mathbb{R}}\|p+c\|_{C^0(\bar{\Omega}_{R})}\leq
		\frac{C}{\varepsilon^{2}},
		\end{align}
		and $\|\nabla{\bf u}\|_{L^\infty(\Omega\setminus\Omega_R)}+\|p\|_{L^\infty(\Omega\setminus\Omega_R)}\leq C\|\boldsymbol\varphi\|_{C^{2,\alpha}(\partial D)}$,
		where $C$ is a {\em{universal} constant} and $\delta(x')$ is defined in \eqref{55}.
	\end{theorem}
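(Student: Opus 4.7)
The plan is to follow the three-step strategy used for Theorem \ref{mainthm0}, adapted to dimension $d\geq 4$. The distinctive feature is that horizontal cross-sections carry an extra factor $|x'|^{d-2}$, so the integrals appearing in $a_{\alpha\beta}$ and $Q_\beta[\boldsymbol\varphi]$ are convergent without any logarithmic cutoff, and the $|\ln\varepsilon|^{-1}$ improvement available in 3D is absent.

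First, I would invoke the linear decomposition ${\bf u}=\sum_{\alpha=1}^{d(d+1)/2}C^\alpha{\bf u}_\alpha+{\bf u}_0$ and $p=\sum_\alpha C^\alpha p_\alpha+p_0$ from \eqref{udecom}, where $({\bf u}_\alpha,p_\alpha)$ and $({\bf u}_0,p_0)$ solve \eqref{equ_v1} and \eqref{equ_v3}. The task then splits into (a) pointwise bounds on $\nabla{\bf u}_\alpha$ and $\nabla{\bf u}_0$ plus pressure-oscillation bounds on $\Omega_R$, and (b) uniform-in-$\varepsilon$ bounds on the free constants $C^\alpha$. For (a), construct divergence-free auxiliary pairs $({\bf v}_\alpha,\bar p_\alpha)$ interpolating $\boldsymbol\psi_\alpha$ on $\partial D_1$ to $0$ on $\partial D$, and $({\bf v}_0,\bar p_0)$ matching the boundary datum $x_1^l e_1$, following the $d$-dimensional analogues of the constructions in Section \ref{sec2outline2D}. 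A direct pointwise computation gives $|\nabla{\bf v}_\alpha(x)|+|\nabla{\bf v}_0(x)|\leq C(\varepsilon+|x'|)/\delta^2(x')$, matching the profile in \eqref{uppD4}. The residuals ${\bf w}_\alpha={\bf u}_\alpha-{\bf v}_\alpha$ and ${\bf w}_0={\bf u}_0-{\bf v}_0$ solve homogeneous Dirichlet Stokes problems with the small sources $\mu\Delta{\bf v}_\alpha-\nabla\bar p_\alpha$ and $\mu\Delta{\bf v}_0-\nabla\bar p_0$; their gradients are absorbed into lower-order contributions via the rescaling-iteration argument of \cite{BLL,BLL2,LX,LXZ}, applied on cylinders of vertical height $\delta(x')$ and horizontal radius $\sqrt{\delta(x')}$.

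For (b), I would derive asymptotic expansions of $a_{\alpha\beta}$ and $Q_\beta[\boldsymbol\varphi]$ analogous to Lemmas \ref{lema113D}--\ref{lemaQb3}. In $d\geq 4$ the model integrals $\int_{|x'|<R}|x'|^k(\varepsilon+|x'|^2)^{-m}\,d|x'|$ are uniformly bounded as $\varepsilon\to 0$ for the relevant $(k,m)$, so every $a_{\alpha\beta}$ and $Q_\beta[\boldsymbol\varphi]$ is of order $O(1)$. After verifying nondegeneracy of the limiting coupling matrix, invert the linear system \eqref{ce} to obtain $|C^\alpha|\leq C$ uniformly in $\varepsilon$. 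Combining (a) and (b) yields the gradient estimate in \eqref{uppD4}. The pressure bound $\inf_c\|p+c\|_{C^0(\bar\Omega_R)}\leq C\varepsilon^{-2}$ follows by summing the oscillations of the auxiliary pressures $\bar p_\alpha,\bar p_0$ with residual contributions controlled by the iteration; the exponent $2$ reflects the natural scaling of the pressure correctors in the narrow neck. Outside the narrow region, classical Stokes regularity as in Proposition 3.6 of \cite{LX} provides the $C^{2,\alpha}(\partial D)$ bound.

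The hardest step will be verifying nondegeneracy of the limiting coupling matrix in dimension $d$. In 3D the entry $a_{11}$ diverges like $|\ln\varepsilon|$ and the hypotheses $(\mathscr H_i)$ exploit the resulting leading-order cancellation in $Q_\beta-a_{1\beta}$-type combinations; in $d\geq 4$ every entry is $O(1)$, so one must identify the leading constants precisely enough to invert the system and obtain a bound on $C^\alpha$ that does not deteriorate as $\varepsilon\to 0$. A secondary technical difficulty is the consistent extension of the divergence-free ${\bf v}_0$ from $\Omega_R$ to all of $\Omega$ while respecting the compatibility condition \eqref{compatibility}, which is handled by a cutoff plus a standard Bogovskii correction supported away from the contact region.
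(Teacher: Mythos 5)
Your proposal follows essentially the same route as the paper: the decomposition \eqref{udecom}, construction of divergence-free auxiliary pairs $({\bf v}_\alpha,\bar p_\alpha)$ and $({\bf v}_0,\bar p_0)$ in $d$ dimensions, the energy-plus-iteration argument on neck cylinders to control the residuals (the paper's Proposition \ref{propud4} and the $d$-dimensional version of Proposition \ref{propu15}), and then boundedness of the $C^\alpha$ from the linear system \eqref{ce}. One inaccuracy worth flagging: your claim that in $d\geq4$ \emph{every} entry $a_{\alpha\beta}$ is $O(1)$ is false for the normal-translation entry $a_{dd}$ when $d=4,5$ — the model integral $\int_0^R r^{d}(\varepsilon+r^2)^{-3}\,dr$ behaves like $\varepsilon^{-1/2}$ for $d=4$ and like $|\ln\varepsilon|$ for $d=5$ — so there is no finite "limiting coupling matrix" in those dimensions. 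This does not break the argument (a divergent diagonal entry with bounded $Q_d[\boldsymbol\varphi]$ only forces $C^d\to0$, exactly as $|C^3|\leq C\varepsilon$ in 3D), but the nondegeneracy step should be phrased via Cramer's rule on the $\varepsilon$-dependent matrix, as in Section \ref{sec4}, rather than via a limiting matrix.
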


	Recalling the definition of $\sigma[{\bf u},p]$ in \eqref{defsigma}, as a consequence of Theorems \ref{mainthm0} and \ref{thmhigher},  we have  the estimates for the Cauchy stress tensor $\sigma[{\bf u},p]$ as follows.
	
	\begin{corollary}
		Assume that $D,D_1,\Omega$, and $\varepsilon$ are defined as in Section \ref{subsec1.1}. Let ${\bf u}\in H^1(D;\mathbb R^d)\cap C^1(\bar{\Omega};\mathbb R^d)$ and $p\in L^2(D)\cap C^0(\bar{\Omega})$ be the solution to \eqref{Stokessys} with $d\geq3$. Then for sufficiently small $0<\varepsilon<1$,  we have  the following assertions:
		
		(i) When $d=3$, it holds that, 
		\begin{enumerate}
			\item
			if $\boldsymbol\varphi\in{\bf\Phi}_1$,  then 
			\begin{align*}
			\inf_{c\in\mathbb{R}}\|\sigma[{\bf u},p+c]\|_{C^0(\bar{\Omega}_{R})}\leq 
			\begin{cases}
			\frac{C}{|\ln\varepsilon|\varepsilon^{3/2}},&  l_1=0,~ l_1\geq2,\\
			\frac{C}{\varepsilon^{3/2}},&  l_1=1;
			\end{cases} 
			\end{align*}
			\item if $\boldsymbol\varphi\in{\bf\Phi}_2$,  then 
			\begin{align*}
			\inf_{c\in\mathbb{R}}\|\sigma[{\bf u},p+c]\|_{C^0(\bar{\Omega}_{R})}\leq 
			\begin{cases}
			\frac{C}{|\ln\varepsilon|\varepsilon^{3/2}},&  l_2=0,\\
			\frac{C}{\varepsilon^{3/2}},&  l_2\geq1;
			\end{cases} 
			\end{align*}
			\item  if $\boldsymbol\varphi\in{\bf\Phi}_3$,  then 
			\begin{align*}
			\inf_{c\in\mathbb{R}}\|\sigma[{\bf u},p+c]\|_{C^0(\bar{\Omega}_{R})}\leq 
			\frac{C}{|\ln\varepsilon|\varepsilon^{3/2}}.
			\end{align*}
		\end{enumerate}
		
		(ii) When $d\geq4$, for example, if $\boldsymbol\varphi=x_1^l e_1$ on $\Gamma_{2R}$, we have 
		\begin{align*}
		\inf_{c\in\mathbb{R}}\|\sigma[{\bf u},p+c]\|_{C^0(\bar{\Omega}_{R})}\leq 
		\frac{C}{\varepsilon^{2}}.
		\end{align*}
	\end{corollary}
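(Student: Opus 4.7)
The plan is to obtain the corollary as a direct consequence of Theorems \ref{mainthm0} and \ref{thmhigher} together with the definition $\sigma[{\bf u},p+c]=2\mu\,e({\bf u})-(p+c)\mathbb{I}$. Since $|e({\bf u})|\le|\nabla{\bf u}|$ pointwise, the triangle inequality immediately yields
\begin{align*}
\inf_{c\in\mathbb{R}}\|\sigma[{\bf u},p+c]\|_{C^0(\bar\Omega_R)}\le 2\mu\,\|\nabla{\bf u}\|_{C^0(\bar\Omega_R)}+\inf_{c\in\mathbb{R}}\|p+c\|_{C^0(\bar\Omega_R)}.
\end{align*}
The whole proof then reduces to verifying, in each of the parameter regimes (i)(1)--(3) and (ii), that the gradient contribution on the right-hand side is absorbed by the pressure contribution already controlled in the two theorems.

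For assertion (i), I would invoke the Remark following Theorem \ref{mainthm0}, which supplies $\|\nabla{\bf u}\|_{L^\infty(\Omega)}\le C/(\varepsilon|\ln\varepsilon|)$ whenever $\boldsymbol\varphi\in{\bf\Phi}_1$, $\boldsymbol\varphi\in{\bf\Phi}_3$, or $\boldsymbol\varphi\in{\bf\Phi}_2$ with $l_2=0$ or $l_2\ge2$, and the bound $C/\varepsilon$ in the single remaining case $\boldsymbol\varphi\in{\bf\Phi}_2$ with $l_2=1$. In every one of the parameter choices appearing in (i)(1)--(3), the pressure bound delivered by Theorem \ref{mainthm0} is of order $\varepsilon^{-3/2}$ (with or without a logarithmic factor matching the gradient regime). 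Since $1/(\varepsilon|\ln\varepsilon|)$ and $1/\varepsilon$ are both $o(\varepsilon^{-3/2})$ as $\varepsilon\to 0$, the pressure term dominates in each case, producing the claimed upper bound on $\sigma[{\bf u},p+c]$.

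For assertion (ii), with $\boldsymbol\varphi=x_1^l e_1$ and $d\ge 4$, I would take the supremum over $\Omega_R$ of the pointwise bound $|\nabla{\bf u}(x)|\le C(\varepsilon+|x'|)/\delta^2(x')$ supplied by Theorem \ref{thmhigher}. Splitting $\Omega_R$ into $|x'|\le\sqrt\varepsilon$, where $\delta(x')\sim\varepsilon$ and the numerator is $O(\sqrt\varepsilon)$, and $|x'|>\sqrt\varepsilon$, where $\delta(x')\sim|x'|^2$ and the ratio collapses to $O(|x'|^{-3})$, shows that $\|\nabla{\bf u}\|_{L^\infty(\Omega_R)}\le C\varepsilon^{-3/2}$. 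Combined with the pressure estimate $\inf_c\|p+c\|_{C^0(\bar\Omega_R)}\le C\varepsilon^{-2}$, the latter dominates (since $\varepsilon^{-3/2}=o(\varepsilon^{-2})$) and gives the stated bound.

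The proof is therefore essentially a tabulation of cases: in every regime the pressure contribution from the preceding theorems is the leading-order quantity, and the stress inherits that rate. I do not anticipate any substantial obstacle beyond carefully matching the logarithmic factors across the three subcases of (i) with those appearing in the gradient and pressure bounds of Theorem \ref{mainthm0}.
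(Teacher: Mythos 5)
Your argument is exactly the paper's: the corollary is stated there as an immediate consequence of Theorems \ref{mainthm0} and \ref{thmhigher} via the definition \eqref{defsigma}, with the pressure term dominating the gradient term in every regime, and your case-by-case verification (including the supremum computation $\sup_{\Omega_R}(\varepsilon+|x'|)/\delta^2(x')\le C\varepsilon^{-3/2}$ in part (ii)) fills in precisely the routine details the paper omits. The proposal is correct.
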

	
	\begin{remark}
		From the proof of Theorem \ref{mainthm0} in Section \ref{sec5}, one can see that it is quite challenging in dimension three to fix the lower bounds of $p$ and $\sigma[{\bf u},p]$ by using our argument in this paper. For example, if $\boldsymbol\varphi\in{\bf\Phi}_1$ and $l_1=0$ in dimension three, then it follows from \eqref{estpl1=0} that the largest term $\frac{1}{|\ln\varepsilon|\varepsilon^{3/2}}$ comes from $|C^1-1|| p_1(x)-(q_1)_{\Omega_R}|$, $|C^2||p_2-(q_2)_{\Omega_R}|$, and $\sum_{\alpha=5}^{6}|C^\alpha||p_\alpha-(q_\alpha)_{\Omega_R}|$. However, it is difficult to determine the lower bounds of these free constants $C^1-1$, $C^2$, $C^5$, and $C^6$. We leave this problem to interested readers and look forward to new idea and technique to prove the lower bounds and the optimal blow-up rates of the Cauchy stress tensor.
	\end{remark}
	
	Our paper is organized as follows. In Section \ref{sec2outline2D}, we construct the auxiliary functions $\bf v_\alpha$ and the associated $p_\alpha$ in dimension three, present the required properties in Lemma \ref{lemauxi} in order to apply the framework for gradient estimates established in \cite{LX}, and obtain the estimates of $|\nabla \bf u_\alpha|$ and $p_\alpha$ in Proposition \ref{propu11}.  Several kinds of prescribed boundary data ${\boldsymbol\varphi}$ are considered and the similar estimates for $|\nabla {\bf u}_0|$ and $p_0$ are also obtained in Section \ref{subsec22}. By virtue of these precise estimates and asymptotic fomulas for $(\nabla{\bf u}_\alpha,p_\alpha)$ and $(\nabla{\bf u}_0,p_0)$, Section \ref{sec4} is devoted to the estimates of $C^\alpha$. Then in Section \ref{sec5} we prove Theorem \ref{mainthm0} for the upper bounds, and Theorem \ref{main1thm02}  for the lower bounds in 3D.  We study the ellipsoid inclusions case, with two different principle curvatures near the origin in Section \ref{secellipsoid}. Finally, the proof of Theorem \ref{thmhigher} for the upper bounds in dimensions greater than three is given in Section \ref{prfhigher4}. 
	
	\section{Estimates of $(\nabla{\bf u}_\alpha,p_\alpha)$ in 3D}\label{sec2outline2D}
	
	In this section, we prove the estimates for $(\nabla{\bf u}_\alpha,p_\alpha)$, $\alpha=1,2,\dots,6$. By the framework for the gradient estimates developed in \cite{LX}, we reduce the estimates for $(\nabla{\bf u}_\alpha,p_\alpha)$ to the constructions of the appropriate auxiliary functions $(\nabla{\bf v}_\alpha,\bar{p}_\alpha)$, which will be proved to be the main singular terms for each part. To express our idea clearly and simplify the calculation, without loss of generality, we assume that $h_{1}$ and $h$ are quadratic, say, $h_1(x^{\prime})=\kappa_1|x^{\prime}|^2$, $h_2(x^{\prime})=\kappa |x^{\prime}|^2$ for $|x^{\prime}|\leq 2R$, where $\kappa_1$, $\kappa$ are two constants and  $\kappa_0=\kappa_1-\kappa>0$. 
	
	We first define a scalar Keller-type auxiliary function $k(x)\in C^{2}(\mathbb{R}^{d})$,
	satisfying $k(x)=\frac{1}{2}$ on $\partial D_{1}$, $k(x)=-\frac{1}{2}$ on $\partial D$, especially,
	\begin{equation}\label{def_vx}
	k(x)=\frac{x_{3}-h(x^{\prime})}{\delta(x^{\prime})}-\frac{1}{2}\quad\hbox{in}\ \Omega_{2R},
	\end{equation}
	and $
	\|k(x)\|_{C^{2}(\mathbb{R}^{d}\backslash\Omega_{R})}\leq C$.
	Clearly,
	\begin{align*}
	\partial_{x_j}k(x)=-\frac{(\kappa_1+\kappa)x_j}{\delta(x^{\prime})}-\frac{2(\kappa_1-\kappa)x_j}{\delta(x^{\prime})}k(x),~\,j=1,2,\quad
	\partial_{x_3}k(x)
	=\frac{1}{\delta(x')},\quad\hbox{in}\ \Omega_{2R}.
	\end{align*}
	
	For $\alpha=1,2$, for convenience, we define 
	\begin{align}\label{defFGH}
	\begin{split}
	F_\alpha(x)&:=-\frac{12}{5}\frac{(\kappa_1+\kappa)x_\alpha^2}{\delta(x^\prime)}+\frac{3}{5}\frac{\kappa_1+\kappa}{\kappa_1-\kappa},\quad
	G(x):=-\frac{12}{5}\frac{(\kappa_1+\kappa)x_1x_2}{\delta(x^\prime)},\\
	H_\alpha(x)&:=(\kappa_1-\kappa)x_\alpha+2(\kappa_1+\kappa)x_\alpha k(x).
	\end{split}
	\end{align}    
	Now we construct ${\bf v}_{\alpha}\in C^{2}(\Omega;\mathbb R^3)$, such that ${\bf v}_{\alpha}={\bf u}_{\alpha}={\boldsymbol\psi}_{\alpha}$ on $\partial{D}_{1}$ and ${\bf v}_{\alpha}={\bf u}_{\alpha}=0$ on $\partial{D}$, especially,
	\begin{align}\label{v1alpha}
	{\bf v}_{\alpha}=\boldsymbol\psi_{\alpha}\big(k(x)+\frac{1}{2}\big)+{\bf E}_\alpha(x)
	\Big(k^2(x)-\frac{1}{4}\Big),\quad\alpha=1,2,\quad \mbox{in}~\Omega_{2R},
	\end{align}
	and $\|{\bf v}_{\alpha}\|_{C^{2}(\Omega\setminus\Omega_{R})}\leq\,C$, where 
	\begin{equation*}
	{\bf E}_1(x)=\Big(F_1(x),G(x),H_1(x)-\delta(x^\prime)\partial_{x_1}k(x)F_1(x)-\delta(x^\prime)\partial_{x_2}k(x)G(x)\Big)^{\mathrm T},
	\end{equation*}
	and 
	\begin{equation*}
	{\bf E}_2(x)=\Big(G(x),F_2(x),H_2(x)-\delta(x^\prime)\partial_{x_1}k(x)G(x)-\delta(x^\prime)\partial_{x_2}k(x)F_2(x)\Big)^{\mathrm T}.
	\end{equation*}
	To cancel out the biggest term in $\Delta{\bf v}_{\alpha}$, $\alpha=1,2$, and to make the technique for gradient estimates developed in \cite{LX} work well, the associated pressure $\bar{p}_\alpha\in C^{1}(\Omega)$ are chosen to be
	\begin{equation}\label{defp113D}
	\bar{p}_\alpha=\frac{6\mu}{5}\frac{\kappa_1+\kappa}{\kappa_1-\kappa}\frac{ x_{\alpha}}{\delta^2(x^\prime)}+\mu\partial_{x_3} {\bf v}_{\alpha}^{(3)},\quad\alpha=1,2,\quad\mbox{in}~\Omega_{2R},
	\end{equation}
	and  $\|\bar{p}_\alpha\|_{C^{1}(\Omega\setminus\Omega_{R})}\leq C$.
	
	For $\alpha=3$, similarly, we seek ${\bf v}_{3}\in C^{2}(\Omega;\mathbb R^3)$ satisfying,
	\begin{align}\label{v2alpha}
	{\bf v}_{3}=\boldsymbol\psi_{3}\big(k(x)+\frac{1}{2}\big)+{\bf E}_3(x)
	\Big(k^2(x)-\frac{1}{4}\Big),\quad \mbox{in}~\Omega_{2R},
	\end{align}
	and $\|{\bf v}_{3}\|_{C^{2}(\Omega\setminus\Omega_{R})}\leq\,C$, where
	\begin{align*}
	{\bf E}_3(x)=\Big(\frac{3x_1}{\delta(x^\prime)},\frac{3x_2}{\delta(x^\prime)},-2k(x)+\frac{3x_1}{\delta(x^\prime)}\tilde H_1(x)+\frac{3x_2}{\delta(x^\prime)}\tilde H_2(x)\Big)^{\mathrm T},
	\end{align*}
	where 
	\begin{align}\label{defH}
	\tilde H_\alpha=(\kappa_1+\kappa)x_\alpha+2(\kappa_1-\kappa)x_\alpha k(x),\quad\alpha=1,2,
	\end{align}
	and the associated $\bar{p}_3\in C^{1}(\Omega)$ satisfying, in $\Omega_{2R}$,
	\begin{equation*}
	\bar{p}_3=-\frac{3\mu}{2(\kappa_1-\kappa)\delta^2(x^\prime)}+\mu\partial_{x_3} {\bf v}_3 ^{(3)}.
	\end{equation*} 
	
	For $\alpha=4$, the construction of ${\bf v}_{4}\in C^{2}(\Omega;\mathbb R^3)$ is easy and as follows:
	\begin{align}\label{v4alpha}
	{\bf v}_{4}=\boldsymbol\psi_{4}\big(k(x)+\frac{1}{2}\big)\quad\mbox{in}~\Omega_{2R},
	\end{align}
	and here we can directly take $\bar{p}_4=0$.
	
	For $\alpha=5,6$, we define 
	\begin{align*}
	F_\alpha(x)= \frac{12}{5}\frac{x_{\alpha-4}^2}{\delta(x^\prime)}-2k(x)x_3-\frac{3x_3^2}{\delta(x^\prime)}+\frac{3}{5(\kappa_1-\kappa)},\quad \tilde G(x)=-\frac{12}{5}\frac{x_1x_2}{\delta(x^\prime)}.
	\end{align*}
	The auxiliary function ${\bf v}_{\alpha}\in C^{2}(\Omega;\mathbb R^3)$ satisfies
	\begin{align}\label{v5alpha}
	{\bf v}_{\alpha}=\boldsymbol\psi_{\alpha}\big(k(x)+\frac{1}{2}\big)+{\bf E}_\alpha(x)
	\Big(k^2(x)-\frac{1}{4}\Big),\quad \alpha=5,6,\quad \mbox{in}~\Omega_{2R},
	\end{align}
	and $ \|{\bf v}_{\alpha}\|_{C^{2}(\Omega\setminus\Omega_{R})}\leq\,C$, where 
	\begin{align*}
	{\bf E}_5(x)=\left(F_5(x),\tilde G(x),2k(x)x_1+\tilde H_1(x)\Big(F_5(x)+2k(x)x_3\Big)-\delta(x')\partial_{x_2}k(x)\tilde G(x)\right)^{\mathrm T},
	\end{align*}
	and 
	\begin{align*}
	{\bf E}_6(x)=\left(\tilde G(x),F_6(x),2k(x)x_2+\tilde H_2(x)\Big(F_6(x)+2k(x)x_3\Big)-\delta(x')\partial_{x_1}k(x)\tilde G(x)\right)^{\mathrm T},
	\end{align*}
	where $\tilde H_1(x)$ and $\tilde H_2(x)$ are defined in \eqref{defH}. Here the associated pressure $\bar{p}_\alpha\in C^{1}(\Omega)$ in $\Omega_{2R}$, are chosen to be
	\begin{equation*}
	\bar{p}_\alpha=\frac{6\mu}{5(\kappa_1-\kappa)}\frac{ x_{\alpha-4}}{\delta^2(x^\prime)}+\mu\partial_{x_3} {\bf v}_{\alpha}^{(3)},\quad\alpha=5,6,\quad \mbox{in}~\Omega_{2R}.
	\end{equation*}
	
	By direct calculations, these auxiliary functions have the following properties, which is necessary to apply the technique in \cite{LLBY,LX} to estimate $(\nabla{\bf u_\alpha}, p_\alpha)$. 
	
	\begin{lemma}\label{lemauxi}
		Let ${\bf v}_\alpha$ and $\bar p_\alpha$ are defined as above, $\alpha=1,\dots,6$. Then in $\Omega_{2R}$, the following assertions hold.
		
		(1) For $\alpha=1,\dots,6$, $\nabla\cdot{\bf v}_\alpha=0$;
		
		(2) 
		\begin{align*}
		|\nabla{\bf v}_{\alpha}(x)|\leq
		\begin{cases}
		\frac{C}{\delta(x')},& \alpha=1,2,5,6,\\
		C\big(\frac{1}{\delta(x^\prime)} +\frac{|x^\prime|}{\delta^2(x^\prime)}\big),& \alpha=3,\\
		C\big(\frac{|x^\prime|}{\delta(x^\prime)} +1\big),& \alpha=4;
		\end{cases}
		\end{align*}
		and
		
		(3) 
		\begin{align*}
		|\mu\Delta{\bf v}_{\alpha}-\nabla\bar{p}_\alpha|\leq 
		\begin{cases}
		\frac{C}{\delta(x^\prime)},& \alpha=1,2,5,6,\\
		\frac{C|x'|}{\delta^{2}(x^\prime)},& \alpha=3,\\
		\frac{C|x'|}{\delta(x^\prime)},& \alpha=4.
		\end{cases}
		\end{align*}
	\end{lemma}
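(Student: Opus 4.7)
The lemma is essentially a verification that the explicit auxiliary pairs $(\mathbf{v}_\alpha, \bar p_\alpha)$ constructed above are incompressible, have the claimed gradient size, and satisfy the Stokes residual bound. My plan is to treat the three items in order, since each reduces to a direct calculation using the explicit formulas for $\mathbf{v}_\alpha$ and $\bar p_\alpha$ together with the identities
\begin{equation*}
\partial_{x_3}k = \frac{1}{\delta(x')}, \qquad \partial_{x_j}k = -\frac{(\kappa_1+\kappa)x_j}{\delta(x')} - \frac{2(\kappa_1-\kappa)x_j}{\delta(x')}\,k(x), \quad j=1,2,
\end{equation*}
together with $\partial_{x_3}^2 k = 0$ and $|\partial_{x_j}^2 k|\leq C/\delta$ for $j=1,2$. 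Throughout, the key bookkeeping principle is that each extra tangential derivative essentially costs at most a factor $|x'|/\delta$, whereas the $x_3$ derivative costs $1/\delta$.

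For item (1) I expand $\nabla\cdot\mathbf{v}_\alpha$ using $\nabla\cdot\boldsymbol{\psi}_\alpha=0$. The third component of $\mathbf{E}_\alpha$ was designed precisely so that the term $2k\partial_{x_3}k \,\mathbf{E}_\alpha^{(3)}/\delta$ coming from $\partial_{x_3}[(k^2-1/4)\mathbf{E}_\alpha^{(3)}]$ matches both the divergence $\boldsymbol{\psi}_\alpha\cdot\nabla k$ produced by the linear-in-$k$ part and the tangential divergences of $\mathbf{E}_\alpha^{(1)},\mathbf{E}_\alpha^{(2)}$; the $-\delta\partial_{x_j}k$ correctors built into $\mathbf{E}_\alpha^{(3)}$ for $\alpha=1,2$ and the analogous ones for $\alpha=3,5,6$ perform exactly this cancellation. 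For $\alpha=4$, one checks that $\boldsymbol{\psi}_4=x_2\mathbf{e}_1-x_1\mathbf{e}_2$ is tangent to the level sets of $k$, so $\boldsymbol{\psi}_4\cdot\nabla k=0$ and no corrector is needed. For item (2), differentiating the explicit expressions and reading off orders, the dominant contribution always comes from $\partial_{x_3}k=1/\delta$ acting on a bounded coefficient, yielding $C/\delta$ for $\alpha=1,2,5,6$; the presence of $x_j/\delta$ inside $\mathbf{E}_3$ produces the additional $|x'|/\delta^2$ term, and the linear growth of $\boldsymbol{\psi}_4$ in $x'$ gives $|x'|/\delta+1$.

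For item (3), which is the crucial point, I compute $\mu\Delta\mathbf{v}_\alpha-\nabla\bar p_\alpha$ component by component. The most singular contribution to $\mu\Delta\mathbf{v}_\alpha$ is $2\mu\mathbf{E}_\alpha(\partial_{x_3}k)^2=2\mu\mathbf{E}_\alpha/\delta^2$, which is \emph{a priori} of size $\delta^{-2}$ in each coordinate, and similar $\delta^{-2}$-terms appear from $\partial_{x_j}^2[(k^2-1/4)\mathbf{E}_\alpha]$. The pressure is tailored so that its $\mu\partial_{x_3}\mathbf{v}_\alpha^{(3)}$ piece kills the $\delta^{-2}$ contribution in the third-component Stokes equation (since $-\partial_{x_3}\bar p_\alpha$ reproduces $-\mu\partial_{x_3}^2\mathbf{v}_\alpha^{(3)}$), while the closed-form leading piece, $\frac{6\mu}{5}\frac{\kappa_1+\kappa}{\kappa_1-\kappa}\frac{x_\alpha}{\delta^2}$ for $\alpha=1,2$ and its analogues for $\alpha=3,5,6$, is precisely calibrated so that $\partial_{x_j}\bar p_\alpha$ cancels the $\delta^{-2}$ portion of $\mu\Delta\mathbf{v}_\alpha^{(j)}$. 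After these cancellations only $O(\delta^{-1})$ remainders survive for $\alpha=1,2,5,6$; the factor $|x'|$ visible in $\mathbf{E}_3$ after one tangential differentiation produces the $|x'|/\delta^2$ bound for $\alpha=3$; and for $\alpha=4$, since there is no $\mathbf{E}_4$ corrector and $\bar p_4\equiv 0$, one only sees the tangential linear terms from $\boldsymbol{\psi}_4$, giving $|x'|/\delta$.

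The main obstacle is the bookkeeping in item (3): the coefficients $F_\alpha,G,H_\alpha,\tilde H_\alpha,\tilde G$ are specifically chosen so that a nontrivial algebraic identity between several $\delta^{-2}$ terms holds, and missing any one of them would leave an uncancelled singularity. I would organize the calculation by splitting $\mathbf{v}_\alpha=\boldsymbol{\psi}_\alpha(k+1/2)+\mathbf{E}_\alpha(k^2-1/4)$ and $\bar p_\alpha=P_\alpha^{\mathrm{lead}}+\mu\partial_{x_3}\mathbf{v}_\alpha^{(3)}$, so that each verification reduces to checking one explicit identity between the leading coefficient of $P_\alpha^{\mathrm{lead}}$, the constants $\kappa_1,\kappa$, and the values of $F_\alpha(0)$, $G(0)$. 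This is a finite, case-by-case computation; the choices of the constants $-12/5$, $3/5$, $6/5$ in the definitions of $F_\alpha$, $G$, $\bar p_\alpha$ are dictated by exactly these identities.
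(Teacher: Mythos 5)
Your proposal is correct and follows essentially the same route as the paper: a direct, case-by-case computation of $\nabla\cdot{\bf v}_\alpha$, $\nabla{\bf v}_\alpha$, and $\mu\Delta{\bf v}_\alpha-\nabla\bar p_\alpha$, using $\partial_{x_3}k=1/\delta(x')$ and the fact that the third component of ${\bf E}_\alpha$ and the leading $x_\alpha/\delta^2$-type term of $\bar p_\alpha$ are calibrated to cancel the divergence and the $\delta^{-2}$ (resp.\ $|x'|\delta^{-3}$ for $\alpha=3$) singularities. The paper carries this out explicitly only for $\alpha=1$ and declares the other cases similar, so your outline of the cancellation mechanism for all six cases, including the observation that ${\boldsymbol\psi}_4\cdot\nabla k=0$ by the radial symmetry of $\delta$, matches its proof in both substance and level of detail.
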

	
	\begin{proof}
		We only consider the case of $\alpha=1$ for instance, since other cases are similar. 
		Recalling the definition of  ${\bf v}_{1}$ and $\bar{p}_1$ in \eqref{v1alpha} and \eqref{defp113D}, by direct calculations, we have, for $x\in\Omega_{2R}$,
		\begin{align}
		\partial_{x_1}{\bf v}_{1}^{(1)}&=\partial_{x_1}k(x)-\frac{24}{5}\frac{(\kappa_1+\kappa)x_1\big(\delta(x^\prime)-(\kappa_1-\kappa)x_1^2\big)}{\delta^2(x^\prime)}\Big(k^2(x)-\frac{1}{4}\Big)\nonumber\\
		&\quad+2k(x)\partial_{x_1}k(x)F_1(x),\label{estv112} \\
		\partial_{x_2}{\bf v}_{1}^{(1)}&=\partial_{x_2}k(x)+\frac{24(\kappa_1+\kappa)(\kappa_1-\kappa)}{5}\frac{x_1^2x_2}{\delta^2(x^\prime)}\Big(k^2(x)-\frac{1}{4}\Big)+2k(x)\partial_{x_2}k(x)F_1(x), \label{estv1121}\\
		\partial_{x_3}{\bf v}_{1}^{(1)}&=\frac{1}{\delta(x^\prime)}+\frac{2k(x)}{\delta(x_{1})}F_1(x),\label{estv1112}\\
		\partial_{x_i}{\bf v}_{1}^{(2)}&=-\frac{12}{5}\frac{(\kappa_1+\kappa)(x_j\delta(x^\prime)-2(\kappa_1-\kappa)x_1x_2x_i)}{5\delta^2(x^\prime)}\Big(k^2(x)-\frac{1}{4}\Big)\nonumber\\
		&\quad+2k(x)\partial_{x_i}k(x)G(x),\quad i,j=1,2, ~i\neq j,\nonumber\\
		\partial_{x_3}{\bf v}_{1}^{(2)}&=\frac{2k(x)}{\delta(x^\prime)}G(x),\quad|\partial_{x_j}{\bf v}_{1}^{(3)}|\leq C,\quad j=1,2,\nonumber\\
		\partial_{x_3}{\bf v}_{1}^{(3)}&=\frac{2}{\delta(x^\prime)}\bigg( (\kappa_1+\kappa)x_{1}+(\kappa_1-\kappa)\big(x_1F_1(x)+x_2G(x)\big)\bigg)\Big(k^2(x)-\frac{1}{4}\Big)\nonumber\\
		&\quad+\frac{2k(x)}{\delta(x^\prime)}{\bf E}_1^{(3)}(x),\label{estv11223}
		\end{align}
		where $F_1(x),G(x)$, and ${\bf E}_1(x)$ are defined in \eqref{defFGH}. These estimates imply that	
		\begin{align}\label{v1u1}
		\nabla\cdot{\bf v}_1=0,\quad |\nabla{\bf v}_{1}|\leq \frac{C}{\delta(x^\prime)}\quad\mbox{in}~\Omega_{2R}.
		\end{align}
		Furthermore, for the second-order partial derivatives, we have, for $i=1,2$,
		\begin{align}
		|\partial_{x_ix_i}{\bf v}_{1}^{(1)}|&\leq\frac{C}{\delta(x^\prime)},\quad\partial_{x_3x_3}{\bf v}_{1}^{(1)}=\frac{2}{\delta^2(x^\prime)}F_1(x),\label{v11-222-2D}\\
		|\partial_{x_ix_i}{\bf v}_{1}^{(2)}|&\leq\frac{C|x^\prime|}{\delta(x^\prime)},\quad \partial_{x_3x_3}{\bf v}_{1}^{(2)}=\frac{2}{\delta^2(x^\prime)}G(x),\label{v12-3D}\\
		|\partial_{x_ix_i}{\bf v}_{1}^{(3)}|&\leq\frac{C|x^\prime|}{\delta(x^\prime)},\quad|\partial_{x_3x_3}{\bf v}_{1}^{(3)}|\leq\frac{C|x^\prime|}{\delta^2(x^\prime)},\quad|\partial_{x_ix_3}{\bf v}_{1}^{(3)}|\leq\frac{C}{\delta(x^\prime)},\label{v12-222-2D}\\
		\partial_{x_1}\bar{p}_1&=\frac{2\mu}{\delta^2(x_{1})}F_1(x)+\mu\partial_{x_1x_3} {\bf v}_{1}^{(3)},\quad
		\partial_{x_2}\bar{p}_1=\frac{2\mu}{\delta^(x^\prime)}G(x)+\mu\partial_{x_2x_3} {\bf v}_{1}^{(3)}.\label{v11-22o2-2g}
		\end{align}
		It is easy to see from the definition of $\bar{p}_1$ in \eqref{defp113D} that
		$$\mu\partial_{x_3x_3}{\bf v}_{1}^{(3)}-\partial_{x_3}\bar{p}_1=0.$$
		Moreover, it follows from \eqref{v11-222-2D}--\eqref{v11-22o2-2g} that
		$$\mu\partial_{x_3x_3}{\bf v}_{1}^{(1)}-\partial_{x_1}\bar{p}_1=-\mu\partial_{x_1x_3}{\bf v}_{1}^{(3)},\quad \mu\partial_{x_3x_3}{\bf v}_{1}^{(2)}-\partial_{x_2}\bar{p}_1=-\mu\partial_{x_2x_3}{\bf v}_{1}^{(3)},$$
		and
		\begin{align}\label{estdivv11p1}
		\left|\mu\Delta{\bf v}_{1}-\nabla\bar{p}_1\right|\leq \frac{C}{\delta(x^\prime)}.
		\end{align}
		Lemma \ref{lemauxi} is proved.
	\end{proof}
	
	For $|x^\prime|\le R$, we define a constant independently of $\varepsilon$,
	\begin{equation}\label{defqalpha}
	(q_{\alpha})_{\Omega_R}=\frac{1}{|\Omega_{R}|}\int_{\Omega_{R}}q_{\alpha}(y)\mathrm{d}y,\quad\alpha=1,\dots,6,
	\end{equation}
	and denote
	\begin{equation}\label{Omegadel}
	\Omega_{\delta}(x^\prime):=\left\{(y^\prime,y_{3})\in\mathbb R^3\big| h(y^\prime)<y_{3}
	<\varepsilon+h_{1}(y^\prime),\,|y^\prime-x^\prime|<\delta \right\},
	\end{equation} 
	where $\delta=\delta(x')$. 
	
	\begin{prop}\label{propu11}
		Let ${\bf u}_{\alpha}\in{C}^{2}(\Omega;\mathbb R^3)$, $p_{\alpha}\in{C}^{1}(\Omega)$ be the solution to \eqref{equ_v1} with $\alpha=1,\dots,6$. Then in $\Omega_{R}$, 
		\begin{equation*}
		\|\nabla({\bf u}_{\alpha}-{\bf v}_{\alpha})\|_{L^{\infty}(\Omega_{\delta/2}(x'))}\leq 
		\begin{cases}
		C,&\alpha=1,2,5,6,\\
		\frac{C}{\sqrt{\delta(x^\prime)}},&\alpha=3,\\
		C\sqrt{\delta(x^\prime)},&\alpha=4,
		\end{cases}
		\end{equation*}
		and 
		\begin{equation*}
		\|\nabla^2({\bf u}_{\alpha}-{\bf v}_{\alpha})\|_{L^{\infty}(\Omega_{\delta/2}(x'))}+\|\nabla q_\alpha\|_{L^{\infty}(\Omega_{\delta/2}(x'))}\leq
		\begin{cases}
		\frac{C}{\delta(x^\prime)},&\alpha=1,2,5,6,\\
		\frac{C}{\delta^{3/2}(x^\prime)},&\alpha=3,\\
		\frac{C}{\sqrt{\delta(x^\prime)}},&\alpha=4.
		\end{cases}
		\end{equation*}
		Consequently, in $\Omega_{R}$, we have 
		\begin{align*}
		|\nabla {\bf u}_{\alpha}(x)|\leq 
		\begin{cases}
		\frac{C}{\delta(x^\prime)},&\alpha=1,2,5,6,\\
		C\left(\frac{1}{\delta(x^\prime)} +\frac{|x^\prime|}{\delta^2(x^\prime)}\right),&\alpha=3,\\
		C\left(\frac{|x^\prime|}{\delta(x^\prime)} +1\right),&\alpha=4,
		\end{cases}
		\end{align*}
		$$|p_{\alpha}(x)-(q_{\alpha})_{\Omega_R}|\leq
		\begin{cases}
		\frac{C}{\varepsilon^{3/2}},&\alpha=1,2,5,6,\\
		\frac{C}{\varepsilon^{2}},&\alpha=3,\\
		\frac{C}{\sqrt{\varepsilon}},&\alpha=4,
		\end{cases}$$
		and
		\begin{align*}
		|\nabla^2 {\bf u}_{\alpha}(x)|+|\nabla p_{\alpha}(x)|\leq 
		\begin{cases}
		\frac{C}{\delta^2(x^\prime)},&\alpha=1,2,5,6,\\
		C\left(\frac{1}{\delta^2(x^\prime)} +\frac{|x^\prime|}{\delta^3(x^\prime)}\right),&\alpha=3,\\
		\frac{C}{\delta(x^\prime)},&\alpha=4.
		\end{cases}
		\end{align*}
	\end{prop}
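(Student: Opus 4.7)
The strategy is to treat the pair $({\bf w}_\alpha,q_\alpha):=({\bf u}_\alpha-{\bf v}_\alpha,\,p_\alpha-\bar p_\alpha)$ as a perturbation of the auxiliary functions and to invoke the iteration framework of \cite{LX}. Since ${\bf v}_\alpha$ shares the boundary data of ${\bf u}_\alpha$ on both $\partial D_1$ and $\partial D$ and is divergence-free by Lemma \ref{lemauxi}(1), the perturbation solves the Stokes system
\begin{equation*}
\mu\Delta{\bf w}_\alpha-\nabla q_\alpha = -{\bf f}_\alpha\quad\mbox{in}~\Omega,\qquad \nabla\cdot{\bf w}_\alpha=0,\qquad {\bf w}_\alpha\big|_{\partial\Omega}=0,
\end{equation*}
with forcing ${\bf f}_\alpha:=\mu\Delta{\bf v}_\alpha-\nabla\bar p_\alpha$ controlled by Lemma \ref{lemauxi}(3). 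The right-hand side is bounded differently in the three regimes $\alpha\in\{1,2,5,6\}$, $\alpha=3$, $\alpha=4$, and those different source bounds are precisely what will produce the three different $\delta$-powers in the conclusion.

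The core of the proof is a localized energy iteration. For $|z'|\leq R$, denote $\delta=\delta(z')$ and consider the cylindrical subregion $\Omega_t(z')$ as in \eqref{Omegadel}. Testing the equation against $\eta^2{\bf w}_\alpha$ for a suitable cutoff and using Bogovskii's operator to correct the test function so that it is divergence-free (this removes the pressure), I would derive a Cacciopoli-type inequality of the form
\begin{equation*}
\int_{\Omega_s(z')}|\nabla{\bf w}_\alpha|^2\leq \frac{C}{(t-s)^2}\int_{\Omega_t(z')}|{\bf w}_\alpha|^2+C\int_{\Omega_t(z')}\delta^2(x')|{\bf f}_\alpha(x)|^2.
\end{equation*}
Iterating between dyadic scales from $t=R$ down to $t\simeq \delta$, exactly as in \cite{LLBY,BLL,BLL2,LX}, together with the narrow-region Poincar\'e inequality in $\Omega_t(z')$, yields the local $L^2$ bound on $\nabla{\bf w}_\alpha$ of the correct $\delta$-order in each regime.

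To pass from $L^2$ to pointwise bounds, I would rescale by $y=(x-z)/\delta$, sending $\Omega_{\delta/2}(z')$ to a domain of unit size. The rescaled system remains Stokes with an $L^\infty$-bounded forcing (after the natural scaling $\delta^2{\bf f}_\alpha$). Applying the interior $W^{2,\infty}$ Stokes estimate of Proposition 3.6 in \cite{LX} and scaling back, one obtains the asserted local bounds on $\nabla({\bf u}_\alpha-{\bf v}_\alpha)$, $\nabla^2({\bf u}_\alpha-{\bf v}_\alpha)$, and $\nabla q_\alpha$. The bounds on $\nabla{\bf u}_\alpha$ and $\nabla^2{\bf u}_\alpha$ then follow by adding the explicit estimates of Lemma \ref{lemauxi}(2) and the direct second-order computation of ${\bf v}_\alpha$ (as in \eqref{v11-222-2D}--\eqref{v12-222-2D}), and the bound on $\nabla p_\alpha$ by combining the bound on $\nabla q_\alpha$ with the explicit formulas for $\bar p_\alpha$. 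For the $L^\infty$ bound on $p_\alpha-(q_\alpha)_{\Omega_R}$, I split it as $\bar p_\alpha+(q_\alpha-(q_\alpha)_{\Omega_R})$: the first term is read off from the explicit formula for $\bar p_\alpha$ (which in the regime $\alpha\in\{1,2,5,6\}$ gives $|x_\alpha|/\delta^2\lesssim |x'|/\delta^2\lesssim \delta^{-3/2}\leq\varepsilon^{-3/2}$), while the second is handled by a Poincar\'e-Sobolev inequality on $\Omega_R$ applied to $q_\alpha$ using the local bound on $\nabla q_\alpha$.

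The main obstacle is the accurate bookkeeping of the $\alpha$-dependent exponents through the iteration and the rescaling, especially for $\alpha=3$ where the source estimate $|{\bf f}_3|\lesssim |x'|/\delta^2$ is not pointwise uniform in $x'$. In that regime the integrand $\delta^2|{\bf f}_3|^2\lesssim |x'|^2/\delta^2$ must be tracked carefully through the dyadic iteration in order to recover the sharp gain of a $\sqrt{\delta}$ factor in the local $L^\infty$ bound on $\nabla({\bf u}_3-{\bf v}_3)$, which is exactly what distinguishes the $\alpha=3$ bound from the other cases.
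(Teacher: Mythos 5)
Your overall route is the same as the paper's: set $({\bf w}_\alpha,q_\alpha)=({\bf u}_\alpha-{\bf v}_\alpha,p_\alpha-\bar p_\alpha)$, run the Caccioppoli-type iteration of \cite{LX} on $\Omega_t(z')$ using the narrow-region Poincar\'e inequality, and upgrade to pointwise bounds by the rescaled local estimate of \cite[Proposition 3.6]{LX}; the treatment of $\nabla{\bf u}_\alpha$, $\nabla^2{\bf u}_\alpha$, $\nabla p_\alpha$ and of $p_\alpha-(q_\alpha)_{\Omega_R}$ by adding back the explicit ${\bf v}_\alpha$, $\bar p_\alpha$ also matches. However, there is one genuine gap: your iteration ``from $t=R$ down to $t\simeq\delta$'' needs a bound on $F$ at the top scale, i.e.\ the global energy estimate $\int_\Omega|\nabla{\bf w}_\alpha|^2\le C$, and you never establish it. This is not a formality. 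The forcing ${\bf f}_\alpha=\mu\Delta{\bf v}_\alpha-\nabla\bar p_\alpha$ is \emph{not} uniformly bounded in $L^2(\Omega)$: for $\alpha=1,2,5,6$ one has $\int_{\Omega_R}|{\bf f}_\alpha|^2\sim\int_{|x'|<R}\delta(x')^{-1}\,\mathrm{d}x'\sim|\ln\varepsilon|$, and for $\alpha=3$ it is of order $\varepsilon^{-1}$, so the naive duality $|\int{\bf f}_\alpha\cdot{\bf w}_\alpha|\le\|{\bf f}_\alpha\|_{L^2}\|{\bf w}_\alpha\|_{L^2}$ does not close uniformly in $\varepsilon$.

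The missing idea is the content of Step 1 of the paper's proof: after cancelling $\mu\partial_{x_3x_3}{\bf v}_\alpha-\partial\bar p_\alpha$ by the very choice of $\bar p_\alpha$, the surviving part of ${\bf f}_\alpha$ consists only of \emph{tangential} derivatives of quantities (such as $\partial_{x_1}{\bf v}^{(1)}$, $\partial_{x_2}{\bf v}^{(1)}$, $\partial_{x_3}{\bf v}^{(3)}$) that are $O(|x'|/\delta(x'))$ and hence uniformly in $L^2(\Omega_R)$. Integrating by parts in $x'$ moves these derivatives onto ${\bf w}_\alpha$, and the resulting boundary term on $\{|x'|=R\}$ is controlled by the trace embedding; this yields $|\int_{\Omega_R}{\bf f}_\alpha\cdot{\bf w}_\alpha|\le C\|\nabla{\bf w}_\alpha\|_{L^2(\Omega)}$ and hence the global bound. (An equivalent fix is a weighted Cauchy--Schwarz using the anisotropic Poincar\'e inequality $\|{\bf w}_\alpha\|_{L^2_{x_3}}\le\delta(x')\|\partial_{x_3}{\bf w}_\alpha\|_{L^2_{x_3}}$ together with $\|\delta\,{\bf f}_\alpha\|_{L^2(\Omega_R)}\le C$, which holds in all regimes.) Without this step your iteration has no admissible starting value, so you should add it explicitly; once it is in place, the rest of your argument, including the $\alpha$-dependent bookkeeping for $\alpha=3,4$, goes through as in the paper.
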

	
	\begin{proof}
		We only consider the case $\alpha=1$ since the proof of the case $\alpha=2,\dots,6$ is similar. For simplicity, we denote ${\bf v}:={\bf v}_1$, $\bar p:=\bar p_1$, ${\bf u}:={\bf u}_1$, $p:=p_1$, and set ${\bf w}={\bf u}-{\bf v}$, $q=p-\bar p$. Then it follows from \eqref{equ_v1} that $({\bf w},q)$ satisfies
		\begin{align}\label{w1}
		\begin{cases}
		-\mu\,\Delta{\bf w}+\nabla q={\bf f}:=\mu\,\Delta{\bf v}-\nabla\bar{p}\quad&\mathrm{in}\,\Omega,\\
		\nabla\cdot {\bf w}=0\quad&\mathrm{in}\,\Omega_{2R},\\
		\nabla\cdot {\bf w}=-\nabla\cdot {\bf v}\quad&\mathrm{in}\,\Omega\setminus\Omega_R,\\
		{\bf w}=0\quad&\mathrm{on}\,\partial\Omega,
		\end{cases}
		\end{align}
		where $\Omega_R$ is defined in \eqref{narrow-region}. We shall divide the proof into three steps.
		
		{\bf Step 1. Global boundedness of $|\nabla{\bf w}|$: 
			\begin{align}\label{w1alpha}
			\int_{\Omega}|\nabla{\bf w}|^{2}\mathrm{d}x\leq\,C.
			\end{align}}  
		
		We first recall that if 
		\begin{align}\label{int-fw}
		\Big| \int_{\Omega_{R}}\sum_{j=1}^{3}{\bf f}^{(j)}{\bf w}^{(j)}\mathrm{d}x\Big|\leq\,C\left(\int_{\Omega}|\nabla {\bf w}|^2\mathrm{d}x\right)^{1/2},
		\end{align}
		then \eqref{w1alpha} follows. See \cite[Lemma 3.7]{LX} for details. To prove \eqref{int-fw}, using the Sobolev trace embedding theorem, we have 
		\begin{align}\label{w1Dw1}
		\int_{\substack{|x'|=R,\\h(x')<x_{3}<\varepsilon+h_{1}(x')}}|{\bf w}|\mathrm{d}x_{3}
		&\leq C\left(\int_{\Omega}|\nabla {\bf w}|^2\mathrm{d}x\right)^{1/2}.
		\end{align}
		By using \eqref{estdivv11p1},  the  integration by parts with respect to $x'$, and  \eqref{w1Dw1}, we obtain
		\begin{align*}
		\left|\int_{\Omega_{R}} {\bf f}^{(1)}{\bf w}^{(1)}\mathrm{d}x\right|
		&=\left|\int_{\Omega_{R}} \mu{\bf w}^{(1)}\big(\partial_{x_1x_1}{\bf v}^{(1)}+\partial_{x_2x_2}{\bf v}^{(1)}-\partial_{x_1x_3}{\bf v}^{(3)}\big)\mathrm{d}x\right|\nonumber\\
		&\leq\,C\int_{\Omega_{R}}|\nabla_{x'}{\bf w}^{(1)}|\big(|\partial_{x_1}{\bf v}^{(1)}|+|\partial_{x_2}{\bf v}^{(1)}|+|\partial_{x_3}{\bf v}^{(3)}|\big)\mathrm{d}x\nonumber\\
		&\quad+C\,\int_{\substack{|x'|=R,\\h(x')<x_{3}<\varepsilon+h_{1}(x')}}|{\bf w}^{(1)}|\mathrm{d}x_{3}\\
		&\leq C\int_{\Omega_{R}}|\nabla_{x'}{\bf w}^{(1)}|\big(|\partial_{x_1}{\bf v}^{(1)}|+|\partial_{x_2}{\bf v}^{(1)}|+|\partial_{x_3}{\bf v}^{(3)}|\big)\mathrm{d}x\nonumber\\
		&\quad+C\left(\int_{\Omega}|\nabla {\bf w}|^2\mathrm{d}x\right)^{1/2}=:\mathrm{I}_{11}+\mathrm{I}_{12}.
		\end{align*}
		From  \eqref{estv112}, \eqref{estv1121},  and \eqref{estv11223}, it follows that
		\begin{equation*}
		|\partial_{x_1}{\bf v}^{(1)}|,|\partial_{x_2}{\bf v}^{(1)}|,|\partial_{x_3}{\bf v}^{(3)}|\leq\frac{C|x'|}{\delta(x')}\quad
		\mbox{ in} ~\Omega_{2R}.
		\end{equation*}
		Combining with H\"{o}lder's inequality, we have
		\begin{align*}
		|\mathrm{I}_{11}|&\leq C\left(\int_{\Omega_{R}}\big(|\partial_{x_1}{\bf v}^{(1)}|^{2}+|\partial_{x_2}{\bf v}^{(1)}|^{2}+|\partial_{x_3}{\bf v}^{(3)}|^2\big)\mathrm{d}x\right)^{1/2}
		\left(\int_{\Omega} |\nabla_{x'}{\bf w}^{(1)}|^2\mathrm{d}x\right)^{1/2}\nonumber\\
		&\leq C \left(\int_{\Omega} |\nabla {\bf w}|^2\mathrm{d}x\right)^{1/2}.
		\end{align*}
		Thus, we conclude that
		\begin{align*}
		\left|\int_{\Omega_{R}} {\bf f}^{(1)}{\bf w}^{(1)}\mathrm{d}x\right|
		\leq C\left(\int_{\Omega}|\nabla {\bf w}|^2\mathrm{d}x\right)^{1/2}.
		\end{align*}
		
		Similarly, using \eqref{v12-3D}, \eqref{v12-222-2D}, \eqref{estdivv11p1},  and \eqref{w1Dw1}, we obtain
		\begin{align*}
		\left|\int_{\Omega_{R}} {\bf f}^{(2)}{\bf w}^{(2)}\mathrm{d}x\right|\leq  C \left(\int_{\Omega} |\nabla {\bf w}|^2\mathrm{d}x\right)^{1/2},
		\end{align*}
		and
		\begin{align*}
		&\left|\int_{\Omega_{R}} {\bf f}^{(3)}{\bf w}^{(3)}\mathrm{d}x\right|=\left|\mu\,\int_{\Omega_{R}} {\bf w}^{(3)}\big(\partial_{x_1x_1}{\bf v}^{(3)}+\partial_{x_2x_2}{\bf v}^{(3)}\big)\mathrm{d}x\right|\nonumber\\
		\leq&\,C\int_{\Omega_{R}}|\nabla_{x'}{\bf w}^{(3)}|\big(|\partial_{x_1}{\bf v}^{(3)}|+|\partial_{x_2}{\bf v}^{(3)}|\big)\mathrm{d}x+C\int_{\substack{|x'|=R,\\h(x_1)<x_{3}<\varepsilon+h_{1}(x')}}|{\bf w}^{(3)}|\mathrm{d}x_{3}\\
		\leq&\,  C \left(\int_{\Omega} |\nabla {\bf w}|^2\mathrm{d}x\right)^{1/2}.
		\end{align*}
		Therefore, we have \eqref{int-fw}.
		
		{\bf Step 2. Local estimate of $|\nabla{\bf w}|$:}
		\begin{align}\label{estw11narrow}
		\int_{\Omega_{\delta}(z')}|\nabla {\bf w}|^{2}\mathrm{d}x\leq C\delta^3(z'),
		\end{align}
		where $\delta:=\delta(z')$, $|z'|\leq R$, and $\Omega_{\delta}(z')$ is defined by \eqref{Omegadel}. Recall that
		\begin{align}\label{iterating1}
		\int_{\Omega_{t}(z')}|\nabla {\bf w}|^{2}\mathrm{d}x\leq\,&
		\frac{C\delta^2(z')}{(s-t)^{2}}\int_{\Omega_{s}(z')}|\nabla {\bf w}|^2\mathrm{d}x+C\left((s-t)^{2}+\delta^{2}(z')\right)\int_{\Omega_{s}(z')}|{\bf f}|^{2}\mathrm{d}x.
		\end{align}
		See \cite[Lemma 3.10]{LX}. Denote
		$$F(t):=\int_{\Omega_{t}(z')}|\nabla {\bf w}|^{2}\mathrm{d}x.$$
		From 	\eqref{estdivv11p1}, it follows that
		\begin{align*}
		\int_{\Omega_{s}(z')}|{\bf f}|^{2}\mathrm{d}x\leq\frac{Cs^2}{\delta(z')}.
		\end{align*}
		Substituting it into \eqref{iterating1}, we have 
		\begin{align}\label{iteration3D}
		F(t)\leq \left(\frac{c_{0}\delta(z')}{s-t}\right)^2 F(s)+C\left((s-t)^{2}+\delta^2(z'\right)\frac{s^2}{\delta(z')},
		\end{align}
		where $c_{0}$ is a constant to be fixed.  Let $k_{0}=\left[\frac{1}{4c_{0}\sqrt{\delta(z')}}\right]+1$ and $t_{i}=\delta(z')+2c_{0}i\delta(z'), i=0,1,2,\dots,k_{0}$. Then we take $s=t_{i+1}$ and $t=t_{i}$ in \eqref{iteration3D} to get 
		\begin{align*}
		F(t_{i})\leq \frac{1}{4}F(t_{i+1})+C(i+1)\delta^3(z').
		\end{align*}
		After $k_{0}$ iterations, and using \eqref{w1alpha}, we obtain, for sufficiently small $\varepsilon$ and $|z'|$,
		\begin{align*}
		F(t_0)\leq \left(\frac{1}{4}\right)^{k}F(t_{k})
		+C\delta^3(z')\sum\limits_{l=0}^{k-1}\left(\frac{1}{4}\right)^{l}(l+1)\leq C\delta^3(z').
		\end{align*}
		This gives \eqref{estw11narrow}.
		
		{\bf Step 3. Local $W^{1,\infty}$ and $W^{2,\infty}$  estimates.} With \eqref{estdivv11p1} and \eqref{estw11narrow}, we are ready to get the desired estimates. 
		We  obtain from \cite[Proposition 3.6]{LX}, \eqref{estdivv11p1}, and \eqref{estw11narrow} that 
		\begin{align*}
		\|\nabla {\bf w}\|_{L^{\infty}(\Omega_{\delta/2}(z'))}
		\leq
		\frac{C}{\delta^{3/2}(z')}\|\nabla{\bf w}\|_{L^{2}(\Omega_{\delta}(z'))}+C\delta(z')\|{\bf f}\|_{L^{\infty}(\Omega_{\delta}(z'))}\leq C,
		\end{align*}
		and
		\begin{align}\label{D2wDq}
		&\|\nabla^2 {\bf w}\|_{L^{\infty}(\Omega_{\delta/2}(z'))}+\|\nabla q\|_{L^{\infty}(\Omega_{\delta/2}(z'))}\nonumber\\
		&\leq
		\frac{C}{\delta^{5/2}(z')}\|\nabla{\bf w}\|_{L^{2}(\Omega_{\delta}(z'))}+C\|{\bf f}\|_{L^{\infty}(\Omega_{\delta}(z'))}+C\delta(z')\|\nabla{\bf f}\|_{L^{\infty}(\Omega_{\delta}(z'))}\leq \frac{C}{\delta(z')}.
		\end{align}
		Recalling ${\bf w}={\bf u}-{\bf v}$, it follows from \eqref{v1u1} that
		\begin{align*}
		|\nabla {\bf u}|\leq|\nabla {\bf v}(z)|+|\nabla {\bf w}(z)|\leq \frac{C}{\delta(z')},\quad z\in\Omega_{R},
		\end{align*}
		and using \eqref{v11-222-2D}--\eqref{v12-222-2D}, we have 
		\begin{align*}
		|\nabla^2 {\bf u}|\leq \frac{C}{\delta(z')}+\frac{C|z'|}{\delta^2(z')},\quad z\in\Omega_{R}.
		\end{align*}
		By $q=p-\bar{p}$, \eqref{defp113D},  the mean value theorem, and the estimate of $|\nabla q|$ in \eqref{D2wDq}, we have
		\begin{align*}
		|p(z)-(q)_{\Omega_R}|\leq |q-(q)_{\Omega_R}|+|\bar{p}|\leq \frac{C}{\varepsilon}+\frac{C|z'|}{\delta^2(z')}\leq\frac{C}{\varepsilon^{3/2}},\quad z\in\Omega_{R}.
		\end{align*}
		Thus, we finish the proof of Proposition \ref{propu11}.
	\end{proof}
	
	\section{Estimates of $(\nabla{\bf u}_0,p_0)$ in 3D}\label{subsec22}
	
	In this section we estimate $|\nabla{\bf u}_{0}|$ and $p_0$ satisfying \eqref{equ_v3}, according to the given different kinds of boundary data $\boldsymbol\varphi$. Define 
	$$(q_0^{\alpha})_{\Omega_R}=\frac{1}{|\Omega_{R}|}\int_{\Omega_{R}}q_0^{\alpha}(x)\mathrm{d}x,\quad\alpha=1,2,3,$$
	which is a constant independent of $\varepsilon$.
	
	(a) For $\boldsymbol\varphi\in{\bf\Phi}_1$. Here we only present the details for the case ${\boldsymbol\varphi}=(x_1^{l_1},0,0)^{\mathrm T}$ on $\Gamma_{2R}$, the other cases are similar. 
	
	(a1) If $l_1=0$, for the locally constant boundary value $\boldsymbol\varphi=(1,0,0)^{\mathrm T}$, we choose ${\bf v}_{0}^1\in C^{2}(\Omega;\mathbb R^3)$ satisfying
	\begin{equation}\label{divfree1}
	{\bf v}_0^{1}+{\bf v}_1=(1,0,0)^{\mathrm T},\quad\mbox{in}~\Omega_{2R},
	\end{equation}
	where ${\bf v}_1$ is defined by \eqref{v1alpha}. 
	We construct $\bar{p}_0^1\in C^{1}(\Omega)$ such that
	\begin{equation*}
	\bar{p}_0^1=-\bar{p}_1=-\frac{6\mu}{5}\frac{(\kappa_1+\kappa)}{(\kappa_1-\kappa)}\frac{ x_{1}}{\delta^2(x^\prime)}+\mu\partial_{x_3} ({\bf v}_0^1) ^{(3)},\quad\mbox{in}~\Omega_{2R}.
	\end{equation*}
	Clearly, $\nabla\cdot{\bf v}_0^1=0$ in $\Omega_{2R}$, and the properties of ${\bf v}_1$ is also useful here.
	
	(a2) If $l_1\geq1$, for the polynomial cases, we seek ${\bf v}_{0}^1\in C^{2}(\Omega;\mathbb R^3)$ satisfying, in $\Omega_{2R}$,
	\begin{align*}
	{\bf v}_0^1=\boldsymbol\varphi\big(\frac{1}{2}-k(x)\big)+\big({\bf E}_0^{1(1)},0,{\bf E}_0^{1(3)}\big)^{\mathrm T}
	\Big(k^2(x)-\frac{1}{4}\Big),
	\end{align*}
	where
	\begin{align*}
	{\bf E}_0^{1(1)}&=\frac{32(\kappa_1-\kappa)}{l_1+2}\frac{x_1^{l_1+2}}{\delta(x')}k(x)+\frac{12\kappa }{l_1+2}\frac{x_1^{l_1+2}}{\delta(x^\prime)}-8x_1^{l_1}k(x)+3x_1^{l_1},\\      
	{\bf E}_0^{1(3)}&=(2k(x)-1)k(x)\delta(x')l_1x_1^{l_1-1}-2x_1^{l_1+1}k(x)\Big((\kappa_1+\kappa)+2(\kappa_1-\kappa)k(x)\Big)\\&\quad-\delta(x^\prime)\partial_{x_1}k(x){\bf E}_0^{1(1)}.
	\end{align*} 
	Then $\nabla\cdot{\bf v}_0^1=0$ in $\Omega_{2R}$, and  the biggest term is $\partial_{x_3x_3}({\bf v}_0^1)^{(1)}\sim \frac{|x^\prime|^{l_1}}{\delta^2(x\prime)}$. It is a good term. So we can directly choose  $\bar{p}_0^1=0$  in $\Omega$. A direct calculation gives, 
	\begin{align}\label{estf01}
	\left|\mu\Delta{\bf v}_0^1-\nabla\bar{p}_0^1\right|=\left|\mu\Delta{\bf v}_0^1\right|\leq         
	\begin{cases}
	\frac{C|x^\prime|}{\delta^2(x^\prime)},&\mbox{if}~l_1=1,\\
	\frac{C}{\delta(x^\prime)},&\mbox{if}~l_1\geq2,
	\end{cases}
	\quad \mbox{in}~\Omega_{2R}.
	\end{align} 
	
	Denote
	\begin{align*}
	\rho_1^{l_1}=
	\begin{cases}
	\frac{1}{\sqrt{\delta(x')}},&\quad l_1=1;\\
	1,&\quad l_1\neq 1. 
	\end{cases}
	\end{align*} 
	Then applying the energy method  and  iteration technique as in the proof of Proposition \ref{propu11}, we derive the following result.         
	
	\begin{prop}\label{propu15}
		Let ${\bf u}_0\in{C}^{2}(\Omega;\mathbb R^3),~p_0\in{C}^{1}(\Omega)$ be the solution to \eqref{equ_v3}. Then in $\Omega_R$, we have 
		\begin{equation*}
		\|\nabla({\bf u}_0-{\bf v}_0^1)\|_{L^{\infty}(\Omega_{\delta/2}(x'))}\leq C\rho_1^{l_1},
		\end{equation*}
		and 
		\begin{equation*}
		\|\nabla^2({\bf u}_0-{\bf v}_0^1)\|_{L^{\infty}(\Omega_{\delta/2}(x'))}+\|\nabla q_0^1\|_{L^{\infty}(\Omega_{\delta/2}(x'))}\leq C
		\begin{cases}
		\frac{C}{\delta^{3/2}(x')},&~ l_1=1,\\
		\frac{1}{\delta(x')},&~l_1\neq 1.
		\end{cases}
		\end{equation*}
		Consequently, for $x\in\Omega_{R}$,
		\begin{align*}
		|\nabla {\bf u}_0(x)|\leq C\rho_1^{l_1}+\frac{C|x'|^{l_1}}{\delta(x')},\quad|p_0(x)-(q_0^1)_{\Omega_R}|\leq
		\begin{cases}
		\frac{C}{\varepsilon^{3/2}},&l_1=0,1,\\
		\frac{1}{\varepsilon},&~l_1\geq2,
		\end{cases}
		\end{align*}	
		and 
		\begin{align*}
		|\nabla^2{\bf u}_0(x)|+|\nabla p_0(x)|\leq
		\begin{cases}
		\frac{C}{\delta^2(x^\prime)},&l_1=0,\\
		\frac{C}{\delta^{3/2}(x')},&l_1=1,\\
		\frac{1}{\delta(x')},&~l_1\geq2.
		\end{cases}
		\end{align*}	
	\end{prop}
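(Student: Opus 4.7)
The plan is to mimic the three-step strategy used for Proposition \ref{propu11}, adapted to the boundary data $\boldsymbol\varphi=(x_1^{l_1},0,0)^{\mathrm T}$. Set $\mathbf{w}:=\mathbf{u}_0-\mathbf{v}_0^1$ and $q:=q_0^1=p_0-\bar p_0^1$; the pair $(\mathbf{w},q)$ then solves a Stokes system of the form \eqref{w1} with source $\mathbf{f}:=\mu\Delta\mathbf{v}_0^1-\nabla\bar p_0^1$, homogeneous Dirichlet data on $\partial\Omega$, and $\nabla\cdot\mathbf{w}=0$ in $\Omega_{2R}$ (since $\nabla\cdot\mathbf{v}_0^1=0$ there by construction). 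For $l_1=0$ the pointwise bound $|\mathbf{f}|\leq C/\delta(x')$ is immediate from Lemma \ref{lemauxi}(3), because $\mathbf{v}_0^1=(1,0,0)^{\mathrm T}-\mathbf{v}_1$ and $\bar p_0^1=-\bar p_1$, while for $l_1\geq 1$ the estimate is supplied by \eqref{estf01}.

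First I would establish the global bound $\int_\Omega|\nabla\mathbf{w}|^2\leq C$ via \cite[Lemma 3.7]{LX}: it suffices to verify $\bigl|\int_{\Omega_R}\mathbf{f}\cdot\mathbf{w}\,\mathrm{d}x\bigr|\leq C(\int_\Omega|\nabla\mathbf{w}|^2)^{1/2}$. I would integrate the $\partial_{x_ix_i}\mathbf{v}_0^1$ pieces of $\mathbf{f}$ by parts in $x'$, move one derivative onto $\mathbf{w}$, and absorb the resulting boundary contribution on $\{|x'|=R\}$ via the trace estimate \eqref{w1Dw1}; the $L^2(\Omega_R)$ norms of the horizontal derivatives of $\mathbf{v}_0^1$ are bounded uniformly in $\varepsilon$ (exactly as in the $\alpha=1$ case of Proposition \ref{propu11}), so H\"older's inequality closes the estimate.

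Next I would run the Campanato-type iteration \eqref{iteration3D} from \cite[Lemma 3.10]{LX}. With $F(t)=\int_{\Omega_t(z')}|\nabla\mathbf{w}|^2$, a direct computation using \eqref{estf01}, $|z'|\lesssim\sqrt{\delta(z')}$, and $\delta(x')\sim\delta(z')+|x'-z'|^2$ yields
\[
\int_{\Omega_s(z')}|\mathbf{f}|^2\,\mathrm{d}x\leq
\begin{cases} Cs^2/\delta(z'),& l_1=0\ \text{or}\ l_1\geq 2,\\[2pt] Cs^2\bigl(\delta(z')+s^2\bigr)/\delta(z')^3,& l_1=1.\end{cases}
\]
Iterating along $t_i=\delta+2c_0 i\delta$ for $k_0\sim 1/\sqrt{\delta}$ steps and combining with the global bound from the previous step produces $F(\delta)\leq C\delta^3(\rho_1^{l_1})^2$. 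Feeding this local $L^2$ bound and $\|\mathbf{f}\|_{L^\infty(\Omega_\delta(z'))}$ into the $W^{1,\infty}$ and $W^{2,\infty}$ estimates of \cite[Proposition 3.6]{LX} on the dilated cell $\Omega_\delta(z')$, the two contributions balance to give exactly the claimed pointwise control of $\nabla\mathbf{w}$, $\nabla^2\mathbf{w}$ and $\nabla q_0^1$. Adding the explicit estimates of $\nabla\mathbf{v}_0^1$ and $\nabla^2\mathbf{v}_0^1$ (obtained by differentiating the defining formulae) then yields the asserted bounds on $|\nabla\mathbf{u}_0|$ and $|\nabla^2\mathbf{u}_0|+|\nabla p_0|$. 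For the pressure oscillation I would split $p_0-(q_0^1)_{\Omega_R}=(q_0^1-(q_0^1)_{\Omega_R})+\bar p_0^1$, bound the first summand by integrating $|\nabla q_0^1|$ along a path in $\Omega_R$, and observe that $\bar p_0^1\equiv 0$ when $l_1\geq 1$, whereas $|\bar p_0^1|\leq C|x_1|/\delta^2\leq C/\varepsilon^{3/2}$ when $l_1=0$.

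The main obstacle is the case $l_1=1$, where $|\mathbf{f}|\lesssim|x'|/\delta^2$ is more singular than the $1/\delta$ bound appearing in Proposition \ref{propu11}, and the iteration must produce the stronger output $F(\delta)\leq C\delta^2$ so that the slow-down factors $\rho_1^{l_1}=1/\sqrt{\delta}$ and $1/\delta^{3/2}$ emerge correctly from the $W^{1,\infty}$ and $W^{2,\infty}$ estimates. The subtlety is to carry the extra factor $|x'|\lesssim\sqrt{\delta(z')}$ through $\int_{\Omega_s(z')}|\mathbf{f}|^2$, split it into the two pieces $Cs^2/\delta$ and $Cs^4/\delta^3$, and verify that both contributions sum to $O(\delta^2)$ after $k_0$ iteration steps. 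Once this bookkeeping is in place, the $l_1=0$ and $l_1\geq 2$ cases, and the remaining polynomial directions in ${\bf\Phi}_1$, follow by the same scheme with the simpler bound $|\mathbf{f}|\lesssim 1/\delta$.
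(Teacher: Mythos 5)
Your proposal is correct and follows essentially the same route as the paper: reduce $l_1=0$ to Proposition \ref{propu11} via \eqref{divfree1}, handle $l_1\geq 2$ identically since \eqref{estf01} matches \eqref{estdivv11p1}, and for $l_1=1$ run the energy bound, the iteration \eqref{iteration3D} with the more singular source to get $\int_{\Omega_\delta(z')}|\nabla\mathbf{w}|^2\leq C\delta^2(z')$, and then the local $W^{1,\infty}$/$W^{2,\infty}$ estimates of \cite[Proposition 3.6]{LX}. The bookkeeping of $\int_{\Omega_s(z')}|\mathbf{f}|^2$ and the final pressure split also agree with the paper's argument.
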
         
	
	\begin{proof}
		The proof is an adaptation of the proof of Proposition \ref{propu11}. One can see from \eqref{divfree1} that the case of $l_1=0$ is essentially the same as that of Proposition \ref{propu11} with $\alpha=1$. Moreover, the proof for $l_1\geq2$ is also similar to the case of $\alpha=1$ in Proposition \ref{propu11}, since the estimate \eqref{estf01} when $l_1\geq2$ is the same as that in \eqref{estdivv11p1}. Thus,  we will only work on the case of $l_1=1$ in the following. As before, we denote ${\bf v}:={\bf v}_0^1$, $\bar p:=\bar p_0$, ${\bf u}:={\bf u}_0$, $p:=p_0$, and set ${\bf w}={\bf u}-{\bf v}$, $q=p-\bar p$. Then it follows from \eqref{equ_v3} that $({\bf w},q)$ satisfies \eqref{w1}.
		
		A direct calculation yields
		\begin{equation*}
		|\nabla{\bf v}|\leq \frac{C|x'|^{l_1}}{\delta(x')}.
		\end{equation*}
		Then using the argument in the proof of Proposition \ref{propu11} (see Step 1 for the details), we find that \eqref{int-fw} holds true. By \eqref{estf01}, the estimate \eqref{iteration3D} becomes
		\begin{align*}
		F(t)\leq \left(\frac{c_{1}\delta(z')}{s-t}\right)^2 F(s)+C\left((s-t)^{2}+\delta^2(z'\right)\frac{s^2}{\delta^3(z')}(s+|z'|^2),
		\end{align*}
		where $c_{1}$ is a constant to be fixed. Then applying the iteration technique as used in the proof of \eqref{estw11narrow}, we obtain 
		\begin{align*}
		\int_{\Omega_{\delta}(z')}|\nabla {\bf w}|^{2}\mathrm{d}x\leq C\delta^2(z'),
		\end{align*}
		where $\delta:=\delta(z')$, $|z'|\leq R$, and $\Omega_{\delta}(z')$ is defined in \eqref{Omegadel}. This together with \cite[Proposition 3.6]{LX} and \eqref{estf01} gives
		\begin{align*}
		\|\nabla {\bf w}\|_{L^{\infty}(\Omega_{\delta/2}(z'))}
		\leq
		\frac{C}{\delta^{3/2}(z')}\|\nabla{\bf w}\|_{L^{2}(\Omega_{\delta}(z'))}+C\delta(z')\|{\bf f}\|_{L^{\infty}(\Omega_{\delta}(z'))}\leq \frac{C}{\sqrt{\delta(z')}},
		\end{align*}
		and
		\begin{align*}
		&\|\nabla^2 {\bf w}\|_{L^{\infty}(\Omega_{\delta/2}(z'))}+\|\nabla q\|_{L^{\infty}(\Omega_{\delta/2}(z'))}\nonumber\\
		&\leq
		\frac{C}{\delta^{5/2}(z')}\|\nabla{\bf w}\|_{L^{2}(\Omega_{\delta}(z'))}+C\|{\bf f}\|_{L^{\infty}(\Omega_{\delta}(z'))}+C\delta(z')\|\nabla{\bf f}\|_{L^{\infty}(\Omega_{\delta}(z'))}\leq \frac{C}{\delta^{3/2}(z')}.
		\end{align*}
		The proof is finished.
	\end{proof}
	
	(b) For $\boldsymbol\varphi\in{\bf\Phi}_2$. Here we only consider the case $i=1$ in \eqref{defphi2}, since the case $i=2$ is the same. 
	
	(b1) If $l_2=0$, for the locally constant $\boldsymbol\varphi=(0,0,1)^{\mathrm T}$,  then we choose ${\bf v}_{0}^2\in C^{2}(\Omega;\mathbb R^3)$ satisfying
	\begin{equation*}
	{\bf v}_0^2+{\bf v}_3=(0,0,1)^{\mathrm T} \quad \mbox{in}~\Omega_{2R}.
	\end{equation*}
	We construct $\bar{p}_0^2\in C^{1}(\Omega)$ such that, in $\Omega_{2R}$,
	\begin{equation*}
	\bar{p}_0^2=-\frac{3\mu}{2(\kappa_1-\kappa)\delta^2(x^\prime)}+\mu\partial_{x_3} ({\bf v}_0^2) ^{(3)} .
	\end{equation*}         
	
	(b2) If $l_2=1$, we construct ${\bf v}_{0}^2\in C^{2}(\Omega;\mathbb R^3)$ satisfying, in $\Omega_{2R}$,
	\begin{align*}
	{\bf v}_0^2=\boldsymbol\varphi\big(\frac{1}{2}-k(x)\big)+\big({\bf E}_0^{2(1)},{\bf E}_0^{2(2)},{\bf E}_0^{2(3)}\big)^{\mathrm T}
	\Big(k^2(x)-\frac{1}{4}\Big)\quad \mbox{in}~\Omega_{2R},
	\end{align*}           
	where                   
	\begin{align*}
	{\bf E}_0^{2(1)}=&-\frac{12}{5}\frac{x_1^2}{\delta(x^\prime)}+\frac{3}{5(\kappa_1-\kappa)},\quad{\bf E}_0^{2(2)}=-\frac{12}{5}\frac{x_1x_2}{\delta(x^\prime)},\\
	{\bf E}_0^{2(3)}=&2k(x)x_1-\delta(x^\prime)\partial_{x_1}k(x){\bf E}_0^{2(1)}-\delta(x^\prime)\partial_{x_2}k(x){\bf E}_0^{2(2)}.
	\end{align*}
	
	(b3) If $l_2\geq 2$, we seek ${\bf v}_{0}^2\in C^{2}(\Omega;\mathbb R^3)$ satisfying, in $\Omega_{2R}$,
	\begin{align*}
	{\bf v}_0^2=\boldsymbol\varphi\big(\frac{1}{2}-k(x)\big)+\big({\bf E}_0^{2(1)},0,{\bf E}_0^{2(3)}\big)^{\mathrm T}
	\Big(k^2(x)-\frac{1}{4}\Big)\quad \mbox{in}~\Omega_{2R}.
	\end{align*}
	Here
	$$  {\bf E}_0^{2(1)}= -\frac{6}{l_2+1}\frac{x_1^{l_2+1}}{\delta(x^\prime)},\quad{\bf E}_0^{2(3)}=2k(x)x_1-\delta(x^\prime)\partial_{x_1}k(x){\bf E}_0^{2(1)}.$$
	It also holds that $\nabla\cdot{\bf v}_0^2=0$ in $\Omega_{2R}$ and 
	\begin{align*}
	|\nabla{\bf v}_0^2|\leq 
	\begin{cases}
	\frac{C}{\delta(x^\prime)}+\frac{|x^\prime|}{\delta^2(x^\prime)},&\quad l_2=0,\\
	\frac{C|x^\prime|^{l_2-1}}{\delta(x^\prime)},&\quad l_2\geq1.
	\end{cases}
	\end{align*}
	In order to control the biggest term in $\Delta{\bf v}_0^2$, we take
	\begin{align*}
	\bar{p}_0^2=
	\begin{cases}
	\frac{6\mu }{5(\kappa_1-\kappa)}\frac{ x_1 }{\delta^2(x')}+\mu \partial_{x_3}({\bf v}_0^2)^{(3)},&\quad l_2=1;\\
	0,&\quad l_2\geq 2.
	\end{cases}
	\end{align*}
	Furthermore, we deduce, in $\Omega_{2R}$,
	\begin{align*}
	\left|\mu\Delta{\bf v}_0^2-\nabla\bar{p}_0^2\right|\leq
	\begin{cases}
	\frac{C}{\delta(x^\prime)},&\quad l_2=1,~l_2\geq3,\\
	\frac{C|x^\prime|}{\delta^2(x^\prime)},&\quad l_2=0,2.
	\end{cases}
	\end{align*} 
	
	Let
	\begin{align*}
	\rho_2^{l_2}=
	\begin{cases}
	\frac{1}{\sqrt{\delta(x')}},&\quad l_2=0, 2;\\
	1,&\quad l_2=1,~l_2\geq 3. 
	\end{cases}
	\end{align*}
	Then apply the same argument as used in the proof of Proposition \ref{propu15}, we derive Proposition \ref{propu16} as follows.
	
	\begin{prop}\label{propu16}
		Let ${\bf u}_0\in{C}^{2}(\Omega;\mathbb R^3),~p_0\in{C}^{1}(\Omega)$ be the solution to \eqref{equ_v3}. Then in $\Omega_{R}$,
		\begin{align*}
		\|\nabla({\bf u}_0-{\bf v}_0^2)\|_{L^{\infty}(\Omega_{\delta/2}(x'))}\leq C\rho_{2}^{l_2},
		\end{align*}
		and 
		\begin{align*}
		\|\nabla^2({\bf u}_0-{\bf v}_0^2)\|_{L^{\infty}(\Omega_{\delta/2}(x'))}+\|\nabla q_0^2\|_{L^{\infty}(\Omega_{\delta/2}(x'))}\leq 
		\frac{C}{\delta(x^\prime)}+\frac{C|x^\prime|}{\delta^2(x^\prime)}.
		\end{align*}
		Consequently, for $x\in\Omega_R$,
		\begin{align*}
		|\nabla {\bf u}_0(x)|\leq \begin{cases}
		\frac{C}{\delta(x^\prime)}+\frac{C|x^\prime|}{\delta^2(x^\prime)},&\quad l_2=0,\\
		\frac{C}{\delta(x^\prime)},&\quad l_2=1,\\
		\frac{C}{\sqrt{\delta(x^\prime)}}+\frac{C|x^\prime|}{\delta(x^\prime)},&\quad l_2=2,\\
		C,&\quad l_2\geq 3,
		\end{cases}
		\end{align*}
		\begin{align*}
		|p_0(x)-(q_0^2)_{\Omega_R}|\leq
		\begin{cases}
		\frac{C}{\varepsilon^2},&\quad l_2=0,\\
		\frac{C}{\varepsilon^{3/2}},&\quad l_2\geq 1,
		\end{cases}
		\end{align*}
		and 
		\begin{align*}
		|\nabla^2 {\bf u}_0(x)|+|\nabla p_0(x)|\leq
		\begin{cases}
		\frac{C}{\delta^2(x^\prime)}+\frac{C|x'|}{\delta^3(x^\prime)},&\quad l_2=0,\\
		\frac{C}{\delta^2(x^\prime)},&\quad l_2=1,\\
		\frac{C}{\delta(x^\prime)}+\frac{C|x^\prime|}{\delta^2(x^\prime)},&\quad l_2\geq2.
		\end{cases}
		\end{align*}
	\end{prop}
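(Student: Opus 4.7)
The plan is to follow the three-step strategy used in the proofs of Propositions \ref{propu11} and \ref{propu15}. Set ${\bf w} = {\bf u}_0 - {\bf v}_0^2$ and $q = p_0 - \bar p_0^2$. Because ${\bf v}_0^2$ has been constructed so that $\nabla\cdot {\bf v}_0^2 = 0$ in $\Omega_{2R}$ and it matches the boundary data of ${\bf u}_0$ on $\partial D_1 \cup \partial D$, the pair $({\bf w},q)$ satisfies exactly the Stokes system \eqref{w1} with forcing ${\bf f} = \mu\Delta {\bf v}_0^2 - \nabla\bar p_0^2$. The only case-dependent input is the pointwise bound on ${\bf f}$ already recorded before the statement: $|{\bf f}| \leq \frac{C}{\delta(x')}$ when $l_2 = 1$ or $l_2 \geq 3$, and the more singular $|{\bf f}| \leq \frac{C|x'|}{\delta^2(x')}$ when $l_2 \in \{0,2\}$.

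For the global energy bound $\int_\Omega |\nabla {\bf w}|^2 \leq C$, I would reduce matters to verifying \eqref{int-fw} exactly as in Step~1 of Proposition \ref{propu11}: integrate by parts in the $x'$ variables to move one derivative off ${\bf v}_0^2$ onto ${\bf w}$, control the resulting $|x'|=R$ boundary term via the trace inequality \eqref{w1Dw1}, and absorb the interior contribution using H\"older's inequality after observing that the first derivatives of ${\bf v}_0^2$ are square-integrable on $\Omega_R$ with bounds uniform in $\varepsilon$ in each case. Plugging the pointwise bound on $|{\bf f}|$ into the Caccioppoli-type iteration inequality \eqref{iteration3D} from \cite[Lemma~3.10]{LX} then produces $\int_{\Omega_\delta(z')}|\nabla {\bf w}|^2 \leq C\delta^3(z')$ when $l_2 = 1$ or $l_2\geq 3$, reproducing the computation from Proposition \ref{propu11}, and only $\int_{\Omega_\delta(z')}|\nabla {\bf w}|^2 \leq C\delta^2(z')$ when $l_2\in\{0,2\}$, reproducing the computation from the $l_1 = 1$ case of Proposition \ref{propu15}.

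Step~3 is then an invocation of \cite[Proposition~3.6]{LX}: the $L^2$ gradient bound on $\Omega_\delta(z')$ combined with the $L^\infty$ bounds on ${\bf f}$ and $\nabla{\bf f}$ delivers the $L^\infty$ estimates of $\nabla{\bf w}$, $\nabla^2 {\bf w}$ and $\nabla q$ claimed in the proposition, from which the estimates of $|\nabla {\bf u}_0|$ and $|\nabla^2 {\bf u}_0| + |\nabla p_0|$ follow by adding the direct $L^\infty$ estimates of $\nabla {\bf v}_0^2$ and $\nabla^2 {\bf v}_0^2$ read off from the explicit formulas. The bound on $|p_0-(q_0^2)_{\Omega_R}|$ follows from the triangle inequality $|p_0 - (q_0^2)_{\Omega_R}| \leq |q - (q)_{\Omega_R}| + |\bar p_0^2|$, where the first term is controlled by the mean value theorem together with the $L^\infty$ bound on $\nabla q$, and the size of $\bar p_0^2$ near the origin contributes $\varepsilon^{-2}$ when $l_2 = 0$ and $\varepsilon^{-3/2}$ otherwise.

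The main obstacle is arithmetic rather than conceptual: one has to verify that the cancellations designed into $\bar p_0^2$ actually reduce $|\mu\Delta {\bf v}_0^2 - \nabla\bar p_0^2|$ from the naive $\delta^{-2}$ size down to the claimed $|x'|/\delta^2$ or $1/\delta$, and then keep careful track of the $|x'|^{l_2}$ factors in $\nabla {\bf v}_0^2$ through the iteration. The parity-dependent split $l_2\in\{0,2\}$ versus $l_2\in\{1\}\cup\{l_2\geq 3\}$ is not an artifact of the method but reflects precisely how these cancellations interact with the quadratic leading term of $\delta(x')$, and is what produces the two different rates $\rho_2^{l_2}$.
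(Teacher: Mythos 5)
Your proposal is correct and follows exactly the route the paper itself takes, namely repeating the three-step scheme of Propositions \ref{propu11} and \ref{propu15} with the forcing bounds on $\mu\Delta{\bf v}_0^2-\nabla\bar p_0^2$ recorded before the statement, so that the split $l_2\in\{0,2\}$ versus $l_2=1,\ l_2\geq 3$ yields the local energies $C\delta^2(z')$ and $C\delta^3(z')$ and hence the two rates $\rho_2^{l_2}$. One small imprecision: it is not true that \emph{all} first derivatives of ${\bf v}_0^2$ are square-integrable uniformly in $\varepsilon$ (for $l_2=0$ the components $\partial_{x_3}({\bf v}_0^2)^{(1)},\partial_{x_3}({\bf v}_0^2)^{(2)}\sim |x'|/\delta^2$ give $\int_{\Omega_R}|\nabla{\bf v}_0^2|^2\sim\varepsilon^{-1}$); only the specific components surviving the integration by parts in $x'$ need to be controlled, exactly as in Step~1 of the proof of Proposition \ref{propu11}.
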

	
	(c) If $\boldsymbol\varphi\in{\bf\Phi}_3$ with $j=1$ in \eqref{defphi3},  we seek ${\bf v}_{0}^3\in C^{2}(\Omega;\mathbb R^3)$ satisfying, in $\Omega_{2R}$,
	\begin{align*}
	{\bf v}_0^3=\boldsymbol\varphi\big(\frac{1}{2}-k(x)\big)+\begin{pmatrix}
	2x_3^{l_3}k(x)+\frac{6}{l_3+1}\frac{x_3^{l_3+1}}{\delta(x^\prime)}\\\\
	0\\\\
	{\bf E}_0^{3(3)}
	\end{pmatrix}
	\Big(k^2(x)-\frac{1}{4}\Big)\quad \mbox{in}~\Omega_{2R}.
	\end{align*}
	Here
	$$ {\bf E}_0^{3(3)}=\big((\kappa_1+\kappa)x_1+2(\kappa_1-\kappa)x_1k(x)\big)\frac{6}{l_3+1}\frac{x_3^{l_3+1}}{\delta(x^\prime)}.$$
	Using a direct computation, we have $\nabla\cdot{\bf v}_0^3=0$ in $\Omega_{2R}$, $|\nabla{\bf v}_0^3|\leq C$, and $|\Delta {\bf v}_0^3|\leq \frac{C}{\delta(x^\prime)}$, so that we choose ${\bar p}_0^3=0$. 
	Also, in $\Omega_{2R}$,
	\begin{align*}
	\left|\mu\Delta{\bf v}_0^3-\nabla\bar{p}_0^3\right|\leq \frac{C}{\delta(x^\prime)}.
	\end{align*}
	
	\begin{prop}\label{propu162}
		Let ${\bf u}_{0}\in{C}^{2}(\Omega;\mathbb R^3),~p_{0}\in{C}^{1}(\Omega)$ be the solution to \eqref{equ_v3}. Then there holds
		\begin{equation*}
		\|\nabla({\bf u}_{0}-{\bf v}_{0}^3)\|_{L^{\infty}(\Omega_{\delta/2}(x'))}\leq C,
		\end{equation*}
		and 
		\begin{equation*}
		\|\nabla^2({\bf u}_{0}-{\bf v}_{0}^3)\|_{L^{\infty}(\Omega_{\delta/2}(x'))}+\|\nabla q_0^3\|_{L^{\infty}(\Omega_{\delta/2}(x'))}\leq \frac{C}{\delta(x^\prime)},~~ \,x\in\Omega_{R}.
		\end{equation*}
		Consequently, for $x\in\Omega_{R}$, 
		\begin{align*}
		|\nabla {\bf u}_{0}(x)|\leq C,\quad |p_{0}(x)-(q_{0}^3)_{\Omega_R}|\leq\frac{C}{\varepsilon},\quad|\nabla p_{0}(x)|+|\nabla^2{\bf u}_{0}(x)|\leq \frac{C}{\delta(x^\prime)}.
		\end{align*}
	\end{prop}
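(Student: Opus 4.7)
The plan is to follow the three-step framework of Proposition \ref{propu11} with $\alpha=1$, which goes through with essentially no change because the auxiliary pair $({\bf v}_0^3,\bar p_0^3)$ already enjoys the structural properties needed: $\nabla\cdot{\bf v}_0^3=0$ in $\Omega_{2R}$, $|\nabla{\bf v}_0^3|\le C$, and $|\mu\Delta{\bf v}_0^3-\nabla\bar p_0^3|\le C/\delta(x')$ with $\bar p_0^3\equiv 0$. Setting ${\bf w}:={\bf u}_0-{\bf v}_0^3$ and $q:=p_0-\bar p_0^3=p_0$, and recalling that ${\bf v}_0^3=\boldsymbol\varphi$ on $\partial D$ and ${\bf v}_0^3=0$ on $\partial D_1$ by construction, the pair $({\bf w},q)$ solves the perturbed Stokes system \eqref{w1} with source ${\bf f}=\mu\Delta{\bf v}_0^3-\nabla\bar p_0^3$ and homogeneous boundary data.

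The first step is the global energy estimate $\int_\Omega|\nabla{\bf w}|^2\le C$. Following Step~1 in the proof of Proposition \ref{propu11}, I would test the weak form against ${\bf w}$, transfer one derivative from ${\bf v}_0^3$ to ${\bf w}$ via integration by parts in $x'$, and handle the boundary slice $\{|x'|=R\}$ by the Sobolev trace embedding as in \eqref{w1Dw1}. The key integrand $|\nabla_{x'}{\bf w}|\,|\nabla{\bf v}_0^3|$ is controlled by H\"older together with the uniform bound $|\nabla{\bf v}_0^3|\le C$. The second step is the local $L^2$ estimate $\int_{\Omega_\delta(z')}|\nabla{\bf w}|^2\le C\delta^3(z')$, obtained from the Caccioppoli-type iteration \cite[Lemma 3.10]{LX} together with $\int_{\Omega_s(z')}|{\bf f}|^2\le Cs^2/\delta(z')$, which coincides with the bound used in \eqref{iteration3D}; the iteration constants are therefore identical. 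The third step invokes \cite[Proposition 3.6]{LX} to promote the local $L^2$ control and the $L^\infty$ bound on ${\bf f}$ to $\|\nabla{\bf w}\|_{L^\infty(\Omega_{\delta/2}(z'))}\le C$ and $\|\nabla^2{\bf w}\|_{L^\infty(\Omega_{\delta/2}(z'))}+\|\nabla q\|_{L^\infty(\Omega_{\delta/2}(z'))}\le C/\delta(z')$.

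The pointwise bounds on $|\nabla{\bf u}_0|$, $|\nabla^2{\bf u}_0|$ and $|\nabla p_0|$ then follow by the triangle inequality once we add back $\nabla{\bf v}_0^3$, $\nabla^2{\bf v}_0^3$, and $\nabla\bar p_0^3=0$; the oscillation estimate $|p_0-(q_0^3)_{\Omega_R}|\le C/\varepsilon$ is immediate from the mean value theorem applied to $q=p_0$ on $\Omega_R$, using $\delta(x')\ge\varepsilon$. The only point requiring genuine verification is that the explicit formula for ${\bf v}_0^3$ really delivers $|\mu\Delta{\bf v}_0^3|\le C/\delta(x')$ with the choice $\bar p_0^3=0$: the potentially singular $\partial_{x_3x_3}$ contributions from the $(k^2-\tfrac14)$-multiplier must cancel against the $\boldsymbol\varphi(\tfrac12-k)$ piece, which in turn hinges on the prefactor $6\,x_3^{l_3+1}/((l_3+1)\delta(x'))$ in ${\bf E}_0^{3(1)}$ being paired correctly with the polynomial $x_3^{l_3}$ coming from $\boldsymbol\varphi$, and on the analogous cancellation in the third component through ${\bf E}_0^{3(3)}$. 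This is the one concrete computation I expect to be slightly delicate; once verified, the remainder of the argument is a verbatim repetition of Proposition \ref{propu11}.
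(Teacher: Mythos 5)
Your proposal is correct and follows exactly the route the paper intends: the paper gives no separate proof of Proposition \ref{propu162}, relying (as you do) on the verified properties $\nabla\cdot{\bf v}_0^3=0$, $|\nabla{\bf v}_0^3|\le C$, $|\mu\Delta{\bf v}_0^3-\nabla\bar p_0^3|\le C/\delta(x')$ with $\bar p_0^3=0$, and then repeating verbatim the three-step energy/iteration/$W^{2,\infty}$ argument of Propositions \ref{propu11} and \ref{propu15}. The one computation you flag as delicate is in fact routine here, since $x_3\sim\delta(x')$ in $\Omega_{2R}$ and $l_3\ge1$ make each term of $\Delta{\bf v}_0^3$ individually of order $\delta^{-1}$ without needing cancellation (the cancellations built into ${\bf E}_0^{3(1)}$ and ${\bf E}_0^{3(3)}$ serve the divergence-free condition rather than the Laplacian bound).
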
           
	
	The auxiliary functions for $j=2,3$ in \eqref{defphi3} are constructed similarly, we left it to the interested reader.

	\section{Estimates of $C^{\alpha}$}\label{sec4}
	In this  section, we proceed to estimate $C^\alpha$, by using the estimates of $(\nabla{\bf u}_\alpha,p_\alpha)$, $\alpha=1,\dots,6$, and $(\nabla{\bf u}_0,p_0)$. We have the following Proposition.
	
	\begin{prop}\label{propu17}
		(1) Assume that ${\boldsymbol\varphi}\in{\bf\Phi}_1$, where ${\bf\Phi}_1$ is defined in \eqref{defphi1}. Then
		if $l_1=0$, we have
		\begin{align*}
		|C^1-1|, |C^\alpha|\leq\frac{C}{|\ln\varepsilon|},~\alpha=2,5,6,\quad |C^3|\leq C\varepsilon,\quad
		|C^4|\leq C;
		\end{align*}
		if $l_1\geq 1$, we have
		\begin{align*}
		|C^\alpha|\leq\frac{C}{|\ln\varepsilon|},~\alpha=1,2,5,6,\quad |C^3|\leq C\varepsilon,\quad	|C^4|\leq C.
		\end{align*}
		
		(2) Assume that ${\boldsymbol\varphi}\in{\bf\Phi}_2$, where ${\bf\Phi}_2$ is defined in \eqref{defphi2}. Then if $l_2=0$, we have
		\begin{align*}
		|C^\alpha|\leq\frac{C}{|\ln\varepsilon|},~\alpha=1,2,5,6,\quad |C^3-1|\leq C\varepsilon, \quad	|C^4|\leq C;
		\end{align*}
		if $l_2=1$, we have 
		\begin{align*}
		|C^\alpha|\leq C, ~\alpha=1,4,5,\quad  |C^\alpha|\leq\frac{C}{|\ln\varepsilon|},~\alpha=2,6,\quad |C^3|\leq C\varepsilon;
		\end{align*}
		if $l_2\geq 2$, we have
		\begin{align*}
		|C^\alpha|\leq\frac{C}{|\ln\varepsilon|},~\alpha=1,2,5,6,\quad |C^3|\leq C\varepsilon,\quad	|C^4|\leq C.
		\end{align*}
		
		(3) Assume that ${\boldsymbol\varphi}\in{\bf\Phi}_3$, where ${\bf\Phi}_3$ is defined in \eqref{defphi3}. Then we have
		\begin{align*}
		|C^\alpha|\leq\frac{C}{|\ln\varepsilon|},~\alpha=1,2,5,6,\quad |C^3|\leq C\varepsilon,\quad	|C^4|\leq C.
		\end{align*}
	\end{prop}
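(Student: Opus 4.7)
The strategy is to solve the $6\times 6$ linear system \eqref{ce}, namely
$$\sum_{\alpha=1}^{6}C^{\alpha}a_{\alpha\beta}=Q_{\beta}[{\boldsymbol\varphi}],\qquad \beta=1,\dots,6,$$
by first establishing sharp asymptotic expansions of the matrix entries $a_{\alpha\beta}$ and of the right-hand sides $Q_\beta[\boldsymbol\varphi]$ for each class of boundary data, and then inverting the resulting nearly block-diagonal system. The expansions of $a_{\alpha\beta}$ and $Q_\beta[\boldsymbol\varphi]$ announced in Lemmas \ref{lema113D}--\ref{lemaQb3} are obtained by plugging the decomposition ${\bf u}_\alpha={\bf v}_\alpha+{\bf w}_\alpha$, $p_\alpha=\bar p_\alpha+q_\alpha$ from Section \ref{sec2outline2D} into the surface integrals in \eqref{aijbj}, using the pointwise control on $\nabla{\bf w}_\alpha$ and $q_\alpha$ from Proposition \ref{propu11}, and extracting the main singular part of $\int_{\Omega_R}(\nabla{\bf v}_\alpha:\nabla{\bf v}_\beta-\bar p_\alpha\nabla\cdot{\bf v}_\beta)$ via integration by parts. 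This yields the leading orders $a_{11},a_{22},a_{55},a_{66}\sim|\ln\varepsilon|$, $a_{33}\sim 1/\varepsilon$, $a_{44}\sim 1$, together with smaller off-diagonal contributions; the expansions of $Q_\beta[\boldsymbol\varphi]$ are obtained similarly using ${\bf v}_0^i$, $\bar p_0^i$ from Section \ref{subsec22}.

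Once these asymptotics are in hand, the proof of Proposition \ref{propu17} becomes essentially algebraic. The key observation is that the leading part of $A=(a_{\alpha\beta})$ is block-diagonal after accounting for the symmetries of $\partial D_1$ and the parities of $\boldsymbol\psi_\alpha$: only a small set of off-diagonal entries couple ($a_{15}$, $a_{26}$ of order $|\ln\varepsilon|$, and lower order cross terms). Thus inversion can be carried out by a Schur complement / perturbation argument, and $C^\alpha$ is essentially $Q_\alpha[\boldsymbol\varphi]/a_{\alpha\alpha}$ up to harmless corrections.

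Case (1), $\boldsymbol\varphi\in\boldsymbol\Phi_1$ with $l_1=0$: here ${\bf v}_0^1+{\bf v}_1=(1,0,0)^{\mathrm T}$ by \eqref{divfree1}, so $Q_\beta[\boldsymbol\varphi]=a_{1\beta}+Q_{1,\beta}[\boldsymbol\varphi]$ with $|Q_{1,\beta}[\boldsymbol\varphi]|\le C$. Substituting in the linear system and grouping with $C^1-1$ on the diagonal gives
\begin{equation*}
(C^1-1)a_{11}+\sum_{\alpha\neq 1}C^\alpha a_{\alpha 1}=O(1),
\end{equation*}
so $|C^1-1|\le C/|\ln\varepsilon|$ since $a_{11}\sim|\ln\varepsilon|$; the other $C^\alpha$ follow by the same mechanism. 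For $l_1\ge 1$, $Q_\beta[\boldsymbol\varphi]$ remains bounded and the estimates are immediate. Cases (2) and (3) are treated identically, with one genuine difference: when $\boldsymbol\varphi\in\boldsymbol\Phi_2$, $l_2=0$, the construction gives ${\bf v}_0^2+{\bf v}_3=(0,0,1)^{\mathrm T}$ so the shifted quantity $C^3-1$ (rather than $C^3$) is what sits on the diagonal entry $a_{33}\sim 1/\varepsilon$, yielding $|C^3-1|\le C\varepsilon$; when $l_2=1$, the right-hand sides $Q_1,Q_4,Q_5$ are of order $1$ and give only $|C^1|,|C^4|,|C^5|\le C$.

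The main technical obstacle is the rigorous justification of the leading-order diagonal structure. A naive diagonal approximation loses factors of $|\ln\varepsilon|$ or $\varepsilon^{-1}$ through the coupling between $C^1,C^5$ (and similarly $C^2,C^6$) whose diagonal entries are of the \emph{same} order $|\ln\varepsilon|$. To handle this I would first diagonalize the $2\times 2$ block $(a_{\alpha\beta})_{\alpha,\beta\in\{1,5\}}$ (and $\{2,6\}$) explicitly using the expansions of Lemma \ref{lema113D}, verifying that the $2\times 2$ determinant is genuinely of order $|\ln\varepsilon|^2$ (not degenerate), then invert the remaining $4\times 4$ and $2\times 2$ pieces separately. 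Once invertibility at the correct order is established, the claimed bounds on each $C^\alpha$ follow by reading off the asymptotic expansion of $A^{-1}Q$.
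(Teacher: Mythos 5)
Your proposal follows essentially the same route as the paper: expand $a_{\alpha\beta}$ and $Q_\beta[\boldsymbol\varphi]$ via the auxiliary functions, exploit the near-block structure of $\mathbb{A}$, and invert. The non-degeneracy check you flag for the coupled $\{1,5\}$ and $\{2,6\}$ blocks is exactly the paper's computation \eqref{cofa34}, which is the product of the two $2\times2$ block determinants and reduces to the cancellation $(a+b)b-b^2=ab\neq0$; the paper then extracts each $C^\alpha$ by Cramer's rule and cofactor asymptotics rather than by an explicit block diagonalization, but this is only a presentational difference. One small correction: in the case $\boldsymbol\varphi\in{\bf\Phi}_2$ with $l_2=1$, the bounds $|C^1|,|C^5|\leq C$ do not come from $Q_1,Q_5$ being $O(1)$ (that would give $C/|\ln\varepsilon|$); by Lemma \ref{lemaQb2} one has $|Q_1[\boldsymbol\varphi]|,|Q_5[\boldsymbol\varphi]|\leq C|\ln\varepsilon|$, and dividing by the diagonal entries of order $|\ln\varepsilon|$ yields only boundedness.
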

	
	From \eqref{ce}, to prove Proposition \ref{propu17}, we need to establish the following estimates and asymptotic expansions of $a_{\alpha\beta}$ and $Q_\beta[{\boldsymbol\varphi}]$. 
	
	\subsection{Estimates of $a_{\alpha\beta}$.}
	By \eqref{aijbj}, using the integration by parts, we have 
	\begin{align}\label{defaij}
	a_{\alpha\beta}=\int_{\Omega} \left(2\mu e({\bf u}_{\alpha}), e({\bf u}_{\beta})\right)\mathrm{d}x,\quad
	Q_{\beta}[{\boldsymbol\varphi}]=-\int_{\Omega} \left(2\mu e({\bf u}_{0}),e({\bf u}_{\beta})\right)\mathrm{d}x.
	\end{align}
	We will make use of the estimates of $|\nabla{\bf u}_\alpha|$ in Proposition \ref{propu11} to derive the following asymptotic expansions and estimates for  $a_{\alpha\beta}$. Set
	$$K(\kappa_1,\kappa)=\frac{9}{25}\Big(\frac{\kappa_1+\kappa}{\kappa_1-\kappa}\Big)^2+\frac{72}{25}\frac{\kappa_1\kappa}{(\kappa_1-\kappa)^2}+\frac{6}{25}.$$
	
	\begin{lemma}\label{lema113D}
		We have
		\begin{align*}
		a_{ii}&= \frac{\mu\pi}{(\kappa_1-\kappa)}\left(1+\Big(\frac{\kappa_1+\kappa}{\kappa_1-\kappa}\Big)^{2}K(\kappa_1,\kappa)\right)|\ln\varepsilon|+O(1),\quad i=1,2;\nonumber\\
		a_{33}&=\frac{\mu \pi}{24(\kappa_1-\kappa)^2}\frac{1}{\varepsilon}+O(|\ln\varepsilon|),\quad\frac{1}{C}\leq a_{44}\leq \ C;\nonumber\\
		a_{ii}&=\frac{\mu\pi\,}{(\kappa_1-\kappa)^3}K(\kappa_1,\kappa)|\ln\varepsilon|+O(1),\quad i=5,6;\nonumber\\
		a_{ij}&=\frac{\mu\pi(\kappa_1+\kappa)}{(\kappa_1-\kappa)^3}K(\kappa_1,\kappa)|\ln\varepsilon|+O(1),\quad (i,j)=(1,5)~\mbox{and}~(2,6).
		\end{align*}
	\end{lemma}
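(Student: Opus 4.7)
\textbf{Proof Proposal for Lemma \ref{lema113D}.}

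The plan is to start from the symmetric-gradient form $a_{\alpha\beta} = \int_{\Omega}(2\mu\,e({\bf u}_\alpha), e({\bf u}_\beta))\,dx$ obtained in \eqref{defaij} via integration by parts, and then reduce everything to explicit integrals involving only the auxiliary functions ${\bf v}_\alpha$ constructed in Section \ref{sec2outline2D}. First, I split $\Omega = \Omega_R \cup (\Omega\setminus\Omega_R)$; the contribution from $\Omega\setminus\Omega_R$ is $O(1)$ by the standard $W^{1,\infty}$-estimate in that region. Inside $\Omega_R$, write ${\bf u}_\alpha = {\bf v}_\alpha + {\bf w}_\alpha$ and expand
\begin{equation*}
\int_{\Omega_R}(2\mu\, e({\bf u}_\alpha), e({\bf u}_\beta))\,dx = \int_{\Omega_R}(2\mu\, e({\bf v}_\alpha), e({\bf v}_\beta))\,dx + \text{cross terms} + \int_{\Omega_R}(2\mu\, e({\bf w}_\alpha), e({\bf w}_\beta))\,dx.
\end{equation*}
By Proposition \ref{propu11}, $\|\nabla{\bf w}_\alpha\|_{L^\infty(\Omega_{\delta/2}(x'))}$ is $O(1)$ for $\alpha=1,2,5,6$, $O(\delta^{-1/2})$ for $\alpha=3$, and $O(\delta^{1/2})$ for $\alpha=4$. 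Combined with the pointwise bounds on $|\nabla{\bf v}_\beta|$ from Lemma \ref{lemauxi} and the measure $|\Omega_R|\sim R^2\cdot \text{(width)}$, both the cross terms and $\int|\nabla{\bf w}_\alpha|^2$ integrate to $O(1)$ for the diagonal entries $\alpha=\beta\in\{1,2,5,6\}$, and their order is easily checked to be lower than the leading $|\ln\varepsilon|$ or $\varepsilon^{-1}$ terms below. Thus it suffices to compute the main term with ${\bf v}_\alpha$ in closed form.

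The explicit computation then proceeds component by component using formulas \eqref{v1alpha}--\eqref{v5alpha}. For $\alpha=\beta=1$, the dominant contributions to $e({\bf v}_1)$ come from $\partial_{x_3}{\bf v}_1^{(1)} = \delta^{-1}(1+2k F_1)$, $\partial_{x_3}{\bf v}_1^{(2)} = 2k\,\delta^{-1}G$, and $\partial_{x_1}{\bf v}_1^{(1)}$; integrating $|e({\bf v}_1)|^2$ first in $x_3\in(h(x'),\varepsilon+h_1(x'))$ using $\int_{-1/2}^{1/2}k^2\,dk = 1/12$ and $\int k^4\,dk = 1/80$ reduces the volume integral to a planar one of the form $\int_{|x'|<R}\frac{P(x'/|x'|)\,|x'|^{2j}}{\delta(x')^{m}}\,dx'$. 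Passing to polar coordinates $r=|x'|$ and using $\int_0^R \frac{r\,dr}{\varepsilon + (\kappa_1-\kappa)r^2} = \frac{1}{2(\kappa_1-\kappa)}|\ln\varepsilon|+O(1)$ gives the $|\ln\varepsilon|$ coefficient; collecting the contributions of the three terms produces the constant $1+\left(\frac{\kappa_1+\kappa}{\kappa_1-\kappa}\right)^2K(\kappa_1,\kappa)$. The same scheme applied to $\alpha=\beta=2$ is symmetric, while for $(\alpha,\beta)=(1,5),(2,6)$ and $\alpha=\beta=5,6$ one uses the rotation-type formula \eqref{v5alpha} with $\tilde H_i$ as in \eqref{defH}; the shared scalar profile $F_\alpha$ produces the common factor $K(\kappa_1,\kappa)$ and the extra powers of $(\kappa_1-\kappa)^{-1}$ and $(\kappa_1+\kappa)$. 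For $\alpha=\beta=3$ the singularity is stronger: $\partial_{x_3}{\bf v}_3^{(3)}\sim -\delta^{-1} + O(|x'|^2/\delta^2)$ dominates, yielding after the same reduction the radial integral $\int_0^R \frac{r\,dr}{(\varepsilon+(\kappa_1-\kappa)r^2)^2} = \frac{1}{2(\kappa_1-\kappa)\varepsilon}+O(1)$, hence the $\varepsilon^{-1}$ blow-up with the stated coefficient. For $a_{44}$, the construction \eqref{v4alpha} gives $\nabla{\bf v}_4$ uniformly bounded and $e({\bf v}_4)$ non-degenerate near the origin, so the two-sided bound $1/C\le a_{44}\le C$ follows from energy comparison (note ${\bf v}_4$ is itself an admissible competitor for the Dirichlet energy).

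The main obstacle will be verifying that the cross terms and the ${\bf w}_\alpha$-squared terms, once all powers of $\delta$ are accounted for, truly land at the $O(1)$ level for the logarithmic cases rather than contributing to the $|\ln\varepsilon|$ coefficient; in particular, for $\alpha=3$ where $|\nabla{\bf w}_3|\lesssim \delta^{-1/2}$, the integral $\int_{\Omega_R}|\nabla{\bf w}_3|^2 \le C\int_{|x'|<R}\frac{dx'}{\delta(x')}\cdot\delta(x') = C\cdot R^2$ is $O(1)$, which is much smaller than $\varepsilon^{-1}$; and for the cross term $\int e({\bf v}_3)\cdot e({\bf w}_3)$ we use $|\nabla{\bf v}_3|\lesssim \delta^{-1}+|x'|\delta^{-2}$ together with H\"older, which gives $O(|\ln\varepsilon|)$, absorbed in the error term. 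The second delicate point is the careful bookkeeping of the scalar integrals $\int_0^{2\pi}\cos^{2j}\theta\sin^{2k}\theta\,d\theta$ arising from the $F_\alpha, G, \tilde G$ terms in the auxiliary functions; these combinatorial factors produce exactly the constants $9/25$, $72/25$ and $6/25$ in the definition of $K(\kappa_1,\kappa)$, and cross-checking those is where most of the computation lives.
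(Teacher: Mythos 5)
Your proposal follows essentially the same route as the paper's proof: reduce $a_{\alpha\beta}$ to $\int_{\Omega_R}(2\mu\,e({\bf v}_\alpha),e({\bf v}_\beta))$ by discarding the outer region and the cross/$\mathbf{w}$-terms via Proposition \ref{propu11}, identify the dominant components $\partial_{x_3}{\bf v}_\alpha^{(j)}$, and evaluate the resulting radial integrals $\int_0^R r\,dr/\delta(r)^m$ to extract the $|\ln\varepsilon|$ and $\varepsilon^{-1}$ leading terms together with the two-sided bound for $a_{44}$. The argument and the error bookkeeping (including the $O(|\ln\varepsilon|)$ cross term for $\alpha=3$) match the paper, so the proposal is correct.
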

	
	\begin{proof}
		We follow the technique used in \cite{LXZ}. By means of  \eqref{defaij}, Proposition \ref{propu11}, and \eqref{v1u1}, we have 
		\begin{align*}
		a_{11}&=\int_{\Omega_R} \left(2\mu e({\bf u}_{1}), e({\bf u}_1)\right)\mathrm{d}x+O(1)\\
		&=\int_{\Omega_R} \left(2\mu e({\bf v}_1), e({\bf v}_1)\right)\mathrm{d}x+\int_{\Omega_R} \left(2\mu e({\bf v}_1), e({\bf w}_1)\right)\mathrm{d}x\\
		&\quad+\int_{\Omega_R} \left(2\mu e({\bf w}_1), e({\bf v}_1)\right)\mathrm{d}x+\int_{\Omega_R} \left(2\mu e({\bf w}_1), e({\bf w}_1)\right)\mathrm{d}x+O(1)\\
		&=\int_{\Omega_R} \left(2\mu e({\bf v}_1), e({\bf v}_1)\right)\mathrm{d}x+O(1),
		\end{align*}    
		where ${\bf w}_1={\bf u}_1-{\bf v}_1$. From \eqref{estv112}--\eqref{estv11223}, we find that the biggest term in $e({\bf v}_1)$ is $\partial_{x_3}{\bf v}_1^{(1)}$ and $\partial_{x_3}{\bf v}_1^{(2)}$. Then we have 
		\begin{align*}
		a_{11}&=\mu\int_{\Omega_R} \left(\frac{1}{\delta(x^\prime)}+\frac{6(\kappa_1+\kappa)}{5}\frac{k(x)}{\delta(x^\prime)}\Big(-\frac{5x_1^2}{\delta(x^\prime)}+\frac{1}{\kappa_1-\kappa}\Big)\right)^2\\
		&\quad+\left(\frac{24(\kappa_1+\kappa)}{5}\frac{x_1x_2}{\delta^2(x^\prime)}k(x)\right)^2\mathrm{d}x+O(1)\\  
		&= \frac{\mu\pi}{(\kappa_1-\kappa)}\left(1+\Big(\frac{\kappa_1+\kappa}{\kappa_1-\kappa}\Big)^{2}K(\kappa_1,\kappa)\right)|\ln\varepsilon|+O(1).
		\end{align*}
		The proof of the estimates of $a_{22}$, $a_{33}$, $a_{55}$, $a_{66}$, $a_{15}$, and $a_{26}$ follows from the same argument, so we omit the details here. 
		For the  estimate of $a_{44}$,
		\begin{align*}
		a_{44}\leq C\int_{\Omega}|\nabla {\bf u}_4|^2\mathrm{d}x\leq C\int_{\Omega_R}\frac{|x'|^2}{\delta^2(x')}\, \mathrm{d}x+C\leq C,
		\end{align*}
		and
		\begin{align*}
		a_{44}\geq \int_{\Omega_R\setminus\Omega_{R/2}}|\partial_{x_3}{\bf v}_{4}^{(2)}|^{2}\geq\int_{\Omega_R\setminus\Omega_{R/2}}\frac{|x'|^{2}}{\delta^{2}(x')}\mathrm{d}x\geq \frac{1}{C}.
		\end{align*} 
		Consequently, the lemma is proved.     	
	\end{proof}

	\begin{lemma}\label{lema114563D}
		For other cases, we have 
		\begin{align*}
		|a_{\alpha\beta}|=|a_{\beta\alpha}|\leq C,\quad \alpha,\beta=1,\dots,6.
		\end{align*}
	\end{lemma}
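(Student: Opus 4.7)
The plan is to follow the structure of the proof of Lemma \ref{lema113D}. The equality $a_{\alpha\beta}=a_{\beta\alpha}$ is immediate from the symmetric bilinear representation in \eqref{defaij}, so I only need the upper bound. Writing ${\bf u}_\gamma={\bf v}_\gamma+{\bf w}_\gamma$ with ${\bf w}_\gamma:={\bf u}_\gamma-{\bf v}_\gamma$, I expand $a_{\alpha\beta}$ into four terms and show each is $O(1)$.

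For every term containing at least one factor of $\nabla{\bf w}_\gamma$, Cauchy--Schwarz together with the global energy bound $\int_\Omega|\nabla{\bf w}_\gamma|^2\le C$ (established in Step 1 of the proof of Proposition \ref{propu11} and obtained analogously for each $\gamma=1,\dots,6$) together with the pointwise bound $\|\nabla{\bf w}_\gamma\|_{L^\infty(\Omega_{\delta/2}(x'))}\le C$ from Proposition \ref{propu11} and the size bounds on $\nabla{\bf v}_\gamma$ from Lemma \ref{lemauxi} convert these into $O(1)$ contributions. The integral over $\Omega\setminus\Omega_R$ is bounded since ${\bf v}_\gamma$ is uniformly $C^2$ there. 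Thus it remains to control
\[
I_{\alpha\beta}:=\int_{\Omega_R} 2\mu\,(e({\bf v}_\alpha),e({\bf v}_\beta))\,\mathrm dx
\]
when $\alpha\neq\beta$ and the pair $\{\alpha,\beta\}$ is different from both $\{1,5\}$ and $\{2,6\}$.

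The main step is a parity argument. Using the explicit formulas \eqref{v1alpha}, \eqref{v2alpha}, \eqref{v4alpha}, \eqref{v5alpha} together with the derivatives computed in \eqref{estv112}--\eqref{estv11223}, the leading $\delta^{-1}$ or $|x'|\delta^{-2}$ contributions to each entry $e_{ij}({\bf v}_\gamma)$ have a definite parity in $x_1$ and $x_2$: for $\gamma\in\{1,2,5,6\}$ the dominant parts of $e_{13}({\bf v}_1)$, $e_{23}({\bf v}_2)$, $e_{13}({\bf v}_5)$, $e_{23}({\bf v}_6)$ are even in $(x_1,x_2)$, while the remaining off-diagonal entries acquire an odd $x_1 x_2/\delta^2$ factor coming from $G(x)$ or $\tilde G(x)$; for $\gamma=3$ every dominant off-axis component carries an explicit $x_1\delta^{-2}$ or $x_2\delta^{-2}$ factor from ${\bf E}_3$; for $\gamma=4$ the rigid-motion factor $x_2 e_1-x_1 e_2$ present in $\boldsymbol\psi_4$ introduces an odd multiplier into every nontrivial entry. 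A case-by-case check of the admissible pairs $(\alpha,\beta)$ then shows that after summing over $(i,j)$, every leading product in $(e({\bf v}_\alpha),e({\bf v}_\beta))$ carries an odd factor of $x_1$ or $x_2$; integrating over the $x'$-symmetric region $\{|x'|<R\}$ eliminates these pieces, and the remaining subleading contributions fall under absolutely convergent bounds of the form $\int_{\Omega_R}|x'|^k\delta^{-m}(x')\,\mathrm dx\le C$ with $k\ge 1$ large enough to beat the singularity.

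The main obstacle is the combinatorial bookkeeping: each admissible off-diagonal pair requires inspecting a handful of matrix entries $e_{ij}({\bf v}_\alpha)\,e_{ij}({\bf v}_\beta)$ and verifying that the odd factor persists after summation over $(i,j)$. Once the parity table for the auxiliary vector fields of Section \ref{sec2outline2D} is set up, each individual estimate is mechanical and no new singular integrals arise beyond those already handled in Lemma \ref{lema113D}.
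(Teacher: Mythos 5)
Your proposal is correct and follows essentially the same route as the paper: reduce $a_{\alpha\beta}$ to $\int_{\Omega_R}\left(2\mu e({\bf v}_\alpha),e({\bf v}_\beta)\right)\mathrm{d}x$ using the bounds on $\nabla{\bf w}_\gamma$ from Proposition \ref{propu11}, then kill the non-integrable leading products (e.g.\ $\partial_{x_3}{\bf v}_1^{(1)}\cdot\partial_{x_3}{\bf v}_2^{(1)}$) by their oddness in $x_1$ or $x_2$ over the symmetric region, and bound the remaining terms absolutely. The paper's proof carries out exactly this for $a_{12}$ and omits the other pairs, just as your parity-table bookkeeping would.
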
     	
	
	\begin{proof}
		By using \eqref{defaij} and Proposition \ref{propu11}, we have 
		\begin{align*}
		a_{12}&=\int_{\Omega_R} \left(2\mu e({\bf u}_{1}), e({\bf u}_{2})\right)\mathrm{d}x+C\\
		&=\int_{\Omega_R} \left(2\mu e({\bf v}_{1}), e({\bf v}_{2})\right)\mathrm{d}x+\int_{\Omega_R} \left(2\mu e({\bf v}_{1}), e({\bf w}_{2})\right)\mathrm{d}x\\
		&\quad+\int_{\Omega_R} \left(2\mu e({\bf w}_{1}), e({\bf v}_{2})\right)\mathrm{d}x+\int_{\Omega_R} \left(2\mu e({\bf w}_{1}), e({\bf w}_{2})\right)\mathrm{d}x+C\\
		&=\int_{\Omega_R} \left(2\mu e({\bf v}_{1}), e({\bf v}_{2})\right)\mathrm{d}x+C.
		\end{align*}
		Recalling the definitions of ${\bf v}_{\alpha}$ in \eqref{v1alpha}, $\alpha=1,2$, a direct calculation yields that the the biggest terms in $(2\mu e({\bf v}_{1}), e({\bf v}_{2}))$ are $\partial_{x_3}{\bf v}_1^{(1)}\cdot\partial_{x_3}{\bf v}_2^{(1)}$ and $\partial_{x_3}{\bf v}_1^{(2)}\cdot\partial_{x_3}{\bf v}_2^{(2)}$. Note that 
		\begin{align*}
		\partial_{x_3}{\bf v}_1^{(1)}\cdot\partial_{x_3}{\bf v}_2^{(1)}=\frac{2k(x)}{\delta(x^\prime)}G(x)\left(\frac{1}{\delta(x^\prime)}+\frac{2k(x)}{\delta(x_{1})}F_1(x)\right),
		\end{align*} 
		which is an odd function with respect to $x_j$, $j=1,2$, where $F_1(x)$ and $G(x)$ are defined in \eqref{defFGH}. Similarly, $\partial_{x_3}{\bf v}_1^{(2)}\cdot\partial_{x_3}{\bf v}_2^{(2)}$ is also an odd function with respect to $x_j$, $j=1,2$. The integral of the rest terms is bounded. Thus, we derive 
		$$|a_{12}|\leq C.$$
		The rest of the terms is proved  similarly, so we omit the details here. 
	\end{proof}

	\begin{lemma}\label{lemaQb1}
		Assume ${\boldsymbol{\varphi}}\in{\bf\Phi}_1$, where ${\bf\Phi}_1$ is defined in \eqref{defphi1}. If $l_1=0$, then we have
		\begin{align*}
		Q_1[\boldsymbol\varphi]&=\frac{\mu\pi}{(\kappa_1-\kappa)}\left(1+\Big(\frac{\kappa_1+\kappa}{\kappa_1-\kappa}\Big)^{2}K(\kappa_1,\kappa)\right)|\ln\varepsilon|+O(1),\\
		Q_5[\boldsymbol\varphi]&=\frac{\mu\pi(\kappa_1+\kappa)}{(\kappa_1-\kappa)^3}K(\kappa_1,\kappa)|\ln\varepsilon|+O(1),\quad|Q_\beta[\boldsymbol\varphi]|\leq C,\quad \beta=2,3,4,6.
		\end{align*}
		If $l_1\geq1$, then we have
		\begin{align*}
		|Q_\beta[\boldsymbol\varphi]|&\leq C,\quad \beta=1,2,4,5,6,
		\end{align*}    	
		and
		\begin{align*}
		|Q_3[\boldsymbol\varphi]|\leq 
		\begin{cases}
		C|\ln\varepsilon|,&\quad l_1=1,\\
		C,&\quad l_1\geq2.
		\end{cases}
		\end{align*}
	\end{lemma}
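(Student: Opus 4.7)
The plan is to begin from the identity \eqref{defaij}, namely
\[
Q_\beta[\boldsymbol\varphi] \;=\; -\int_\Omega \bigl(2\mu\, e({\bf u}_0),\, e({\bf u}_\beta)\bigr)\,\mathrm{d}x,
\]
and observe that the contribution from $\Omega\setminus\Omega_R$ is $O(1)$ by standard regularity away from the neck. Inside $\Omega_R$ I write ${\bf u}_0 = {\bf v}_0^1 + {\bf w}_0$ and ${\bf u}_\beta = {\bf v}_\beta + {\bf w}_\beta$ with ${\bf w}_\ast := {\bf u}_\ast - {\bf v}_\ast$, and expand the inner product into four pieces. The three pieces containing at least one remainder ${\bf w}$ will be shown to be $O(1)$: the pointwise remainder bounds in Propositions \ref{propu11} and \ref{propu15}, paired with the bounds on $|\nabla{\bf v}_\beta|$ from Lemma \ref{lemauxi} (and analogous bounds on $|\nabla{\bf v}_0^1|$ read off from the explicit formulas in (a1)--(a2)), reduce those cross terms to weighted integrals $\int_{\Omega_R}|x'|^k\delta(x')^{-m}\,\mathrm{d}x \lesssim \int_0^R r^{k+1}(\varepsilon+r^2)^{-(m-1)}\,\mathrm{d}r$ whose finiteness is checked case by case.

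For $l_1 = 0$, the key observation is the identity ${\bf v}_0^1 + {\bf v}_1 = (1,0,0)^{\mathrm T}$ in $\Omega_{2R}$ from construction (a1), which forces $e({\bf v}_0^1) = -e({\bf v}_1)$ on $\Omega_R$. Hence the main term becomes $\int_{\Omega_R}\bigl(2\mu\, e({\bf v}_1),\, e({\bf v}_\beta)\bigr)\,\mathrm{d}x$, which is precisely $a_{1\beta}$ up to $O(1)$ by the same calculations carried out in the proofs of Lemmas \ref{lema113D} and \ref{lema114563D}. The stated asymptotics for $Q_1[\boldsymbol\varphi]$ and $Q_5[\boldsymbol\varphi]$ then inherit the leading terms of $a_{11}$ and $a_{15}$, while $|Q_\beta[\boldsymbol\varphi]|\leq C$ for $\beta\in\{2,3,4,6\}$ follows from $|a_{1\beta}|\leq C$.

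For $l_1 \geq 1$, no such reduction is available, so I evaluate $-\int_{\Omega_R}\bigl(2\mu\, e({\bf v}_0^1),\, e({\bf v}_\beta)\bigr)\,\mathrm{d}x$ directly using the explicit construction (a2). The dominant entry of $e({\bf v}_0^1)$ is $e_{13}({\bf v}_0^1)$, governed by $\partial_{x_3}({\bf v}_0^1)^{(1)}$ with leading behaviour of order $x_1^{l_1}/\delta(x')$. Coupled with the known leading behaviour of $e({\bf v}_\beta)$ recorded in Section \ref{sec2outline2D}, the resulting integrand is odd in $x_1$ or $x_2$ when $\beta\in\{1,2,4,5,6\}$, producing only $O(1)$ contributions. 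The case $\beta=3$ is different: $e_{13}({\bf v}_3)$ has leading behaviour $3x_1/\delta^2(x')$, so the integrand is even and proportional to $x_1^{l_1+1}/\delta^3(x')$, and
\[
\int_{\Omega_R}\frac{x_1^{l_1+1}}{\delta^3(x')}\,\mathrm{d}x \;\sim\; \int_0^R \frac{r^{l_1+2}}{(\varepsilon+r^2)^2}\,\mathrm{d}r,
\]
which is $|\ln\varepsilon|$ when $l_1=1$ and bounded when $l_1\geq 2$, matching the claim.

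The main obstacle is the bookkeeping. The polynomial ${\bf v}_0^1$ in (a2) is the sum of several pieces of comparable size, and I must verify that the parity-based cancellations really hold at leading order for each $\beta\neq 3$ — this requires checking that no subleading piece of $e({\bf v}_\beta)$ pairs with an asymmetric piece of $e({\bf v}_0^1)$ to produce a hidden $|\ln\varepsilon|$ term. A careful tracking of all the monomial pieces of $\partial_{x_i}({\bf v}_0^1)^{(j)}$ and a comparison with the explicit expressions \eqref{estv112}--\eqref{estv11223} (and their analogues for $\alpha\neq 1$) should suffice.
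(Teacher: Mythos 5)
Your treatment of the case $l_1=0$ is exactly the paper's: the identity ${\bf v}_0^1+{\bf v}_1=(1,0,0)^{\mathrm T}$ reduces $Q_\beta[\boldsymbol\varphi]$ to $a_{1\beta}+O(1)$, and the asymptotics are imported from Lemmas \ref{lema113D} and \ref{lema114563D}. For $l_1\geq1$, however, you take a genuinely different and needlessly delicate route. The paper does not compute the leading term of $-\int_{\Omega_R}(2\mu e({\bf v}_0^1),e({\bf v}_\beta))$ at all: it simply inserts the pointwise bounds $|\nabla{\bf u}_0|\leq C\rho_1^{l_1}+C|x'|^{l_1}/\delta(x')$ and $|\nabla{\bf u}_\beta|$ from Propositions \ref{propu15} and \ref{propu11} and integrates absolute values over $\Omega_R$ (whose thickness is $\delta(x')$), e.g. $\int_{\Omega_R}\delta^{-1}(\delta^{-1/2}+|x'|\delta^{-1})\,\mathrm{d}x\leq C$ for $\beta=1$, $l_1=1$, and $\int_{\Omega_R}\delta^{-1/2}(\delta^{-1}+|x'|\delta^{-2})\,\mathrm{d}x\leq C|\ln\varepsilon|$ for $\beta=3$, $l_1=1$. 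No cancellation is needed for any $\beta$ when $l_1\geq1$.

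This matters because your parity claims, as stated, are not uniformly correct. The leading integrand for $\beta\in\{1,2,5,6\}$ is proportional to $x_i^{l_1}/\delta^2$, which is odd in $x_i$ only when $l_1$ is odd; for even $l_1$ it is even and nothing cancels — the integral is just bounded because $\int_{\Omega_R}|x'|^{l_1}\delta^{-2}\,\mathrm{d}x\leq C$ for $l_1\geq1$. Symmetrically, for $\beta=3$ the integrand $\sim x_1^{l_1+1}/\delta^3$ is even only when $l_1$ is odd; for even $l_1$ it is odd and the leading term vanishes, while your displayed integral $\int_0^R r^{l_1+2}(\varepsilon+r^2)^{-2}\,\mathrm{d}r$ is the bound on its absolute value. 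So the "main obstacle" you identify (verifying cancellations for every monomial piece) is self-inflicted: replace the parity argument by the crude absolute-value estimate you already use for the cross terms and the proof closes, with the single logarithm surviving only for $(\beta,l_1)=(3,1)$. With that correction the proposal is sound.
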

	
	\begin{proof}
		If $l_1=0$, then by using the definition of $Q_\beta[\boldsymbol\varphi]$ in \eqref{aijbj}, we find that the estimates of $|Q_\beta[\boldsymbol\varphi]|$ are the same as that of $|a_{1\beta}|$. Next we consider the case of $l_1\geq1$.
		
		If $l_1=1$, then by using Propositions \ref{propu11} and  \ref{propu15}, we have 
		\begin{align}\label{Q11} 
		|Q_1[\boldsymbol\varphi]|&=\left|\int_{\Omega} \left(2\mu e({\bf u}_0),e({\bf u}_{1})\right)\mathrm{d}x\right|\leq\left|\int_{\Omega_R} \left(2\mu e({\bf u}_0),e({\bf u}_{1})\right)\mathrm{d}x\right|+C\nonumber\\
		&\leq C\int_{\Omega_R}\frac{1}{\delta(x^\prime)} \left(\frac{1}{\sqrt{\delta(x^\prime)}}+\frac{|x^\prime|}{\delta(x^\prime)}\right)\mathrm{d}x+C\leq C.  
		\end{align} 
		If $l_1\geq 2$, similar to \eqref{Q11}, we have
		\begin{align*}
		Q_1[\boldsymbol\varphi]|&=\left|\int_{\Omega} \left(2\mu e({\bf u}_{0}),e({\bf u}_{1})\right)\mathrm{d}x\right|\leq C\int_{\Omega_R}\frac{\mathrm{d}x}{\delta(x^\prime)}\ +C\leq C.
		\end{align*}      	
		The estimates of $Q_2[\boldsymbol\varphi]$, $Q_5[\boldsymbol\varphi]$, and $Q_6[\boldsymbol\varphi]$ in the case of $l_1\geq1$ are the same as that of $Q_1[\boldsymbol\varphi]$. We thus omit the details.
		
		In order to estimate $Q_3[\boldsymbol\varphi]$, we first have from Propositions \ref{propu11} and  \ref{propu15} that, for $l_1=1$,
		\begin{align*}
		|Q_3[\boldsymbol\varphi]|&\leq\left|\int_{\Omega_R} \left(2\mu e({\bf u}_0),e({\bf u}_{3})\right)\mathrm{d}x\right|+C\nonumber\\
		&\leq C\int_{|x'|\leq R}\frac{1}{\sqrt{\delta(x^\prime)}} \left(1+\frac{|x^\prime|}{\delta(x^\prime)}\right)\mathrm{d}x+C\leq C|\ln\varepsilon|.
		\end{align*}
		If $l_1\geq2$, using the above process, we have
		$$|Q_3[\boldsymbol\varphi]|\leq C.$$       
		The process of estimating $|Q_4[\boldsymbol\varphi]|$ is easy by using Propositions \ref{propu11} and  \ref{propu15}, we omit the proof here. We thus prove the lemma.
	\end{proof}   
	
	\begin{lemma}\label{lemaQb2}
		Assume ${\boldsymbol{\varphi}}\in{\bf\Phi}_2$, where ${\bf\Phi}_2$ is defined in \eqref{defphi2}. The following assertions hold.
		
		(a) If $l_2=0$, then
		\begin{align*}
		Q_3[\boldsymbol\varphi]=\frac{\mu \pi}{24(\kappa_1-\kappa)^2}\frac{1}{\varepsilon}+O(|\ln\varepsilon|);\quad
		|Q_\beta[\boldsymbol\varphi]|\leq C,\quad \beta=1,2,4,5,6.
		\end{align*}
		
		(b) If $l_2\geq 1$, then 
		\begin{align*}
		|Q_\beta[\boldsymbol\varphi]|\leq
		\begin{cases}
		C|\ln\varepsilon|,&l_2=1,\\
		C,& l_2\geq 2,
		\end{cases} \quad \beta=1,5;\quad
		|Q_\beta[\boldsymbol\varphi]|\leq
		C,\quad l_2\geq 1,\quad \beta=2,3,4,6.
		\end{align*}    	
	\end{lemma}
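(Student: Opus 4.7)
The plan is to start from the integration-by-parts identity
\begin{equation*}
Q_\beta[\boldsymbol\varphi] = -\int_{\Omega} 2\mu\, e(\mathbf{u}_0) : e(\mathbf{u}_\beta)\, \mathrm{d}x
\end{equation*}
from \eqref{defaij}, and note that the contribution from $\Omega\setminus\Omega_R$ is universally bounded by standard regularity, so everything reduces to analyzing the integral over $\Omega_R$. Inside $\Omega_R$ I would split $\mathbf{u}_0 = \mathbf{v}_0^2 + \mathbf{w}_0^2$ and $\mathbf{u}_\beta = \mathbf{v}_\beta + \mathbf{w}_\beta$, with the error parts controlled by Propositions \ref{propu16} and \ref{propu11}, and treat the $\mathbf{v}$--$\mathbf{v}$, $\mathbf{v}$--$\mathbf{w}$, and $\mathbf{w}$--$\mathbf{w}$ pieces separately.

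For the constant case $l_2=0$, the key is the built-in identity $\mathbf{v}_0^2 + \mathbf{v}_3 = (0,0,1)^{\mathrm T}$ in $\Omega_{2R}$ from Section \ref{subsec22}(b1), which forces $e(\mathbf{v}_0^2) = -e(\mathbf{v}_3)$ there. The leading $\mathbf{v}$--$\mathbf{v}$ piece of $Q_\beta[\boldsymbol\varphi]$ then becomes $2\mu\int_{\Omega_R} e(\mathbf{v}_3):e(\mathbf{v}_\beta)\,\mathrm{d}x$, which is exactly the leading piece of $a_{3\beta}$ computed in Lemmas \ref{lema113D} and \ref{lema114563D}. This delivers the asymptotic expansion $Q_3[\boldsymbol\varphi] = \tfrac{\mu \pi}{24(\kappa_1-\kappa)^2\varepsilon} + O(|\ln\varepsilon|)$ and $|Q_\beta[\boldsymbol\varphi]| \le C$ for $\beta \neq 3$. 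The mixed $\mathbf{v}$--$\mathbf{w}$ and $\mathbf{w}$--$\mathbf{w}$ integrals are $O(1)$ by H\"older, using $|\nabla \mathbf{w}_0^2| \le C/\sqrt{\delta(x')}$ from Proposition \ref{propu16} (since $\rho_2^0 = 1/\sqrt{\delta}$).

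For $l_2 \ge 1$ the first attempt is the pointwise H\"older-type bound
\begin{equation*}
|Q_\beta[\boldsymbol\varphi]| \le C + C\int_{\Omega_R}|\nabla \mathbf{u}_0|\,|\nabla \mathbf{u}_\beta|\,\mathrm{d}x,
\end{equation*}
feeding in $|\nabla \mathbf{u}_0|$ from Proposition \ref{propu16} and $|\nabla \mathbf{u}_\beta|$ from Proposition \ref{propu11}. Combined with $\int_{\Omega_R} \delta^{-2}(x')\,\mathrm{d}x \sim |\ln\varepsilon|$, this crude estimate already yields the stated $C|\ln\varepsilon|$ bound for $(\beta,l_2)\in\{(1,1),(5,1)\}$, the $C$ bound for $\beta = 4$ (where the extra factor $|x'|$ in $|\nabla \mathbf{u}_4|$ gains one power of $\delta$), and the $C$ bound whenever $l_2 \ge 3$ (where $|\nabla \mathbf{u}_0| \le C$).

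The main obstacle is sharpening the crude estimate down to $C$ for $\beta \in \{2,3,6\}$ with $l_2 = 1$ and for $\beta = 3$ with $l_2 = 2$, where the pointwise bound is too generous; for instance, for $\beta=3$, $l_2=1$ the crude bound even degenerates to order $\varepsilon^{-1/2}$ because of the $|x'|/\delta^2$ term in $|\nabla \mathbf{u}_3|$. The expected remedy is a parity/symmetry argument: since $\boldsymbol\varphi = (0,0,x_i^{l_2})^{\mathrm T}$ forces each component of $\mathbf{v}_0^2$ to have a definite even/odd parity in $x_i$, and the components of $\mathbf{v}_2,\mathbf{v}_3,\mathbf{v}_6$ inherit definite parities from their rigid-motion generators $e_2,e_3,x_3 e_2-x_2 e_3$, a direct bookkeeping shows that each leading singular product $e(\mathbf{v}_0^2)_{jk}\,e(\mathbf{v}_\beta)_{jk}$ is an odd function of $x_1$ or $x_2$. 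Integration over the $x_1$- and $x_2$-symmetric region $\Omega_R$ then annihilates the singular part and leaves only an $O(1)$ remainder from the lower-order pieces of the auxiliary functions. This componentwise parity analysis, analogous to the cancellations behind Lemma \ref{lema114563D}, is the technical heart of the proof.
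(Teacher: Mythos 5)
Your overall strategy is the same as the paper's: the paper also reduces the case $l_2=0$ to the computation of $a_{3\beta}$ via the identity ${\bf v}_0^2+{\bf v}_3=(0,0,1)^{\mathrm T}$, and for $l_2\ge1$ it uses exactly the crude H\"older-type bound $\int_{\Omega_R}|\nabla{\bf u}_0||\nabla{\bf u}_\beta|$ with the pointwise estimates of Propositions \ref{propu11} and \ref{propu16} (the paper only writes out $\beta=1$ and dismisses the rest with ``other cases are similar''). You are right, and more careful than the paper, in observing that the crude bound does not deliver the stated constant bound for $\beta\in\{2,3,6\}$: e.g.\ for $\beta=3$, $l_2=1$ it degenerates to $\varepsilon^{-1/2}$. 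Your proposed remedy --- a parity cancellation of the leading singular products, in the spirit of Lemma \ref{lema114563D} --- is the correct mechanism for $l_2=1$, where ${\bf E}_0^{2(1)}\sim x_1^2/\delta(x')$ is \emph{even} in $x_1$ while the dominant entries $e({\bf v}_\beta)_{13}\sim kx_1/\delta^2$, $e({\bf v}_\beta)_{23}\sim kx_2/\delta^2$ of $e({\bf v}_2),e({\bf v}_3),e({\bf v}_6)$ are odd, so the products integrate to zero over the symmetric region.

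However, your blanket claim that ``each leading singular product is an odd function of $x_1$ or $x_2$'' fails in the one remaining case you flagged, $(\beta,l_2)=(3,2)$ with $\boldsymbol\varphi=(0,0,x_1^2)^{\mathrm T}$. There ${\bf E}_0^{2(1)}=-2x_1^3/\delta(x')$ is \emph{odd} in $x_1$, so the dominant parts of the two factors,
\begin{equation*}
e({\bf v}_0^2)_{13}\sim -\frac{2k(x)x_1^{3}}{\delta^{2}(x')},\qquad e({\bf v}_3)_{13}\sim \frac{3k(x)x_1}{\delta^{2}(x')},
\end{equation*}
have the same parity; their product is even and
\begin{equation*}
\int_{\Omega_R}\frac{k^2(x)\,x_1^{4}}{\delta^{4}(x')}\,\mathrm{d}x
=\frac{1}{12}\int_{|x'|<R}\frac{x_1^{4}}{\delta^{3}(x')}\,\mathrm{d}x'\sim |\ln\varepsilon|,
\end{equation*}
so no cancellation by symmetry occurs and the integral is genuinely logarithmic at face value. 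You therefore need either a different cancellation (between the $e_{13}$, $e_{23}$, and $e_{33}$ contributions, which your componentwise bookkeeping would have to exhibit explicitly) or to accept the weaker bound $|Q_3[\boldsymbol\varphi]|\le C|\ln\varepsilon|$ for $l_2=2$, which still yields $|C^3|\le C\varepsilon|\ln\varepsilon|$ and does not affect the main theorems. As it stands, this single case is a concrete gap in your argument (and, to be fair, one the paper's ``other cases are similar'' glosses over as well).
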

	
	\begin{proof}
		The proof is similar to Lemma \ref{lemaQb1}. If $l_2=0$, then the estimates of $|Q_\beta[\boldsymbol\varphi]|$ is simiar to that of $|a_{3\beta}|$. For $l_2\geq 1$, we only consider the case of $|Q_1[\boldsymbol\varphi]|$, since other cases are similar. If $l_2=1$, then combining Propositions \ref{propu11} and \ref{propu16}, we have
		\begin{align*}
		|Q_1[\boldsymbol\varphi]|&=\left|\int_{\Omega} \left(2\mu e({\bf u}_0),e({\bf u}_{1})\right)\mathrm{d}x\right|\leq\left|\int_{\Omega_R} \left(2\mu e({\bf u}_0),e({\bf u}_{1})\right)\mathrm{d}x\right|+C\nonumber\\
		&\leq C\int_{\Omega_R}\frac{1}{\delta^{2}(x')} \mathrm{d}x+C\leq C|\ln\varepsilon|.  
		\end{align*}
		If $l_2= 2$, 
		\begin{align*}
		|Q_1[\boldsymbol\varphi]|&=\left|\int_{\Omega} \left(2\mu e({\bf u}_0),e({\bf u}_{1})\right)\mathrm{d}x\nonumber\right|\leq C\int_{\Omega_R}\frac{1}{\delta(x')}\left(\frac{1}{\sqrt{\delta(x')}}+\frac{|x'|}{\delta(x')}\right)\mathrm{d}x+C\leq C.
		\end{align*}
		Similarly, 	if $l_2\geq3$, 
		$$|Q_1[\boldsymbol\varphi]|\leq C.$$
		We thus complete the proof of the estimate of $|Q_1[\boldsymbol\varphi]|$.    
	\end{proof}
	
	Similar to Lemma \ref{lemaQb2}, we have the following result. 
	
	\begin{lemma}\label{lemaQb3}
		If ${\boldsymbol{\varphi}}\in{\bf\Phi}_3$, where ${\bf\Phi}_3$ is defined in \eqref{defphi3}, then we have
		\begin{align*}
		|Q_\beta[\boldsymbol\varphi]|&\leq C, \quad \beta=1,\dots,6.
		\end{align*}     	
	\end{lemma}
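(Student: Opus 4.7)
The strategy mirrors the proofs of Lemmas \ref{lemaQb1} and \ref{lemaQb2}: via integration by parts we rewrite
\begin{equation*}
Q_\beta[\boldsymbol\varphi]=-\int_{\Omega}\bigl(2\mu\, e({\bf u}_0),e({\bf u}_\beta)\bigr)\,\mathrm{d}x,
\end{equation*}
decompose the domain as $\Omega=\Omega_R\cup(\Omega\setminus\Omega_R)$, and handle the contribution from $\Omega\setminus\Omega_R$ by the usual interior bounds (it is $O(1)$ by standard Stokes estimates away from the narrow neck). Thus everything reduces to bounding the integral of $|\nabla{\bf u}_0|\,|\nabla{\bf u}_\beta|$ on the neck $\Omega_R$.

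The crucial input specific to $\boldsymbol\varphi\in{\bf\Phi}_3$ is Proposition \ref{propu162}, which gives the \emph{uniform} bound $|\nabla{\bf u}_0(x)|\leq C$ for $x\in\Omega_R$. Combining this with Proposition \ref{propu11}, I will bound
\begin{equation*}
|Q_\beta[\boldsymbol\varphi]|\leq C\int_{\Omega_R}|\nabla{\bf u}_\beta(x)|\,\mathrm{d}x+C,
\end{equation*}
and then estimate the right-hand side case by case according to $\beta$. For $\beta=1,2,5,6$, Proposition \ref{propu11} yields $|\nabla{\bf u}_\beta|\leq C/\delta(x')$, so Fubini in $x_3$ gives $\int_{\Omega_R}\delta(x')^{-1}\mathrm{d}x=\int_{|x'|<R}\mathrm{d}x'\leq C$. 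For $\beta=3$ the extra term $|x'|/\delta^2(x')$ is controlled by passing to polar coordinates in $x'$: $\int_0^R\frac{r^2}{\varepsilon+\kappa_0 r^2}\,\mathrm{d}r\leq C$. For $\beta=4$ the bound $|\nabla{\bf u}_4|\leq C(1+|x'|/\delta(x'))$ trivially integrates to a constant over $\Omega_R$.

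The main (minor) point to verify is that the uniform $|\nabla{\bf u}_0|\leq C$ really does suffice for all six values of $\beta$; this is what separates this lemma from Lemmas \ref{lemaQb1}--\ref{lemaQb2}, where $|\nabla{\bf u}_0|$ can blow up and one has to absorb factors of $|x'|^{l_i}$ or $\rho_i^{l_i}$ against $|\nabla{\bf u}_\beta|$ to produce the same bound. Here the absence of any singularity in ${\bf u}_0$ makes the estimate straightforward, and no $|\ln\varepsilon|$ factor arises for any $\beta$, including $\beta=3$ (contrast with the case ${\bf\Phi}_1$, $l_1=1$ in Lemma \ref{lemaQb1}, or ${\bf\Phi}_2$, $l_2=1$ in Lemma \ref{lemaQb2}, where the logarithm appears because the bound on $|\nabla{\bf u}_0|$ degenerates like $\delta^{-1/2}$). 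I expect the proof to occupy only a few lines, essentially remarking that the argument of Lemma \ref{lemaQb1} with $l_1\geq 2$ carries over verbatim after invoking Proposition \ref{propu162}, and omitting the details.
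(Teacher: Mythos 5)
Your proposal is correct and follows exactly the route the paper intends: the paper gives no written proof for this lemma, merely remarking that it is "similar to Lemma \ref{lemaQb2}", and your argument—reducing to $\int_{\Omega_R}|\nabla{\bf u}_0||\nabla{\bf u}_\beta|$, invoking the uniform bound $|\nabla{\bf u}_0|\leq C$ from Proposition \ref{propu162}, and integrating the bounds of Proposition \ref{propu11} case by case (with the correct observation that the neck has thickness $\delta(x')$ in $x_3$)—is precisely that omitted computation, carried out correctly.
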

	
	Now let us prove Proposition \ref{propu17}.
	\begin{proof}[Proof of Proposition \ref{propu17}.]  
		Denote
		$$ X=(C^1,\dots,C^6)^{\mathrm T},\quad P=(Q_{1}[{\boldsymbol\varphi}],\dots,Q_{6}[{\boldsymbol\varphi}]),\quad \mathbb{A}=(a_{\alpha\beta})_{6\times6}.$$
		Denote $$a:=\frac{1}{\kappa_1-\kappa},~ b:=\frac{(\kappa_1+\kappa)^2}{(\kappa_1-\kappa)^3}K(\kappa_1,\kappa).$$
		Then, in view of Lemma  \ref{lema113D}, it follows that    	
		\begin{align}\label{cofa34}
		& a_{11}a_{22}a_{55}a_{66}+a_{15}^2a_{26}^2-	a_{15}a_{22}a_{51}a_{66}-a_{11}a_{26}a_{55}a_{62}\nonumber\\
		&=\frac{(\mu\pi)^4}{(\kappa_1+\kappa)^4}\Big((a+b)^2b^2+b^4-2(a+b)b^3\Big)|\ln\varepsilon|^4+O(|\ln\varepsilon|^3)\nonumber\\
		&=\frac{(\mu\pi)^4}{(\kappa_1+\kappa)^4}a^2b^2|\ln\varepsilon|^4+O(|\ln\varepsilon|^3).
		\end{align}
		Combining with the fact that $a,b\textgreater 0$, we derive 
		\begin{align}\label{deta}
		\det \mathbb{A}&=
		a_{33}a_{44}\Big(a_{11}a_{22}a_{55}a_{66}+a_{15}^2a_{26}^2-	a_{15}a_{22}a_{51}a_{66}-a_{11}a_{26}a_{55}a_{62}\Big)+O\Big(\frac{|\ln\varepsilon|^3}{\varepsilon}\Big)\nonumber\\
		&\ge \frac{|\ln\varepsilon|^4}{C\varepsilon}.
		\end{align}
		Thus, $\mathbb{A}$ is invertible, and 
		\begin{equation*}
		C^\alpha=\frac{\det\mathbb{A}_{\alpha}}{\det\mathbb{A}},
		\end{equation*} 
		where $\mathbb{A}_{\alpha}$ is a matrix after replacing the $\alpha$-th column of $\mathbb{A}$ with $P$. Next we only prove the case of ${\boldsymbol\varphi}\in{\bf\Phi}_1$, since others are similar.    	
		
		(a) If $l_1=0$,  denote by $\mbox{cof}(\mathbb A)_{ij}$  the cofactor of $\mathbb A$. By making use of \eqref{ce} and Cramer's rule,  we have
		\begin{align}\label{estC1}
		C^1=\frac{1}{\det\mathbb{A}}\Big(&\mbox{cof}(\mathbb A)_{11} Q_{1}[{\boldsymbol\varphi}]-\mbox{cof}(\mathbb A)_{21} Q_{2}[{\boldsymbol\varphi}]+\mbox{cof}(\mathbb A)_{31}Q_{3}[{\boldsymbol\varphi}]-\mbox{cof}(\mathbb A)_{41} Q_{4}[{\boldsymbol\varphi}]\nonumber\\
		&+\mbox{cof}(\mathbb A)_{51} Q_{5}[{\boldsymbol\varphi}]-\mbox{cof}(\mathbb A)_{61} Q_{6}[{\boldsymbol\varphi}] \Big).
		\end{align}
		From Lemma \ref{lemaQb1}, \eqref{cofa34}, and \eqref{deta}, it follows that
		\begin{align*}
		\mbox{cof}(\mathbb A)_{11}=a_{22}a_{33}a_{44}a_{55}a_{66}-a_{26}a_{33}a_{44}a_{55}a_{62}+O(|\ln\varepsilon|^2\varepsilon^{-1}),
		\end{align*}
		and thus
		\begin{align}\label{cofa11}
		\frac{\mbox{cof}(\mathbb A)_{11}}{\det\mathbb{A}}=\frac{\kappa_1-\kappa}{\mu\pi}\frac{1}{|\ln\varepsilon|}+O(|\ln\varepsilon|^{-2}).
		\end{align}
		Similarly, 
		\begin{align}\label{cofa21}
		|\mbox{cof}(\mathbb A)_{21}|,|\mbox{cof}(\mathbb A)_{61}|\leq C|\ln\varepsilon|^2\varepsilon^{-1},~|\mbox{cof}(\mathbb A)_{31}|\leq|\ln\varepsilon|^3,~|\mbox{cof}(\mathbb A)_{41}|\leq C|\ln\varepsilon|^3\varepsilon^{-1},
		\end{align}
		and 
		\begin{equation}\label{cofA}
		\frac{\mbox{cof}(\mathbb A)_{51}}{\det\mathbb{A}}=\frac{(\kappa_1-\kappa)(\kappa_1+\kappa)}{\mu\pi}\frac{1}{|\ln\varepsilon|}+O(|\ln\varepsilon|^{-2}).
		\end{equation}
		Substituting \eqref{cofa11}--\eqref{cofA} into \eqref{estC1}, we have 
		\begin{align}\label{estCi}
		C^1=\frac{\kappa_1-\kappa}{\mu\pi}\left(Q_{1}[{\boldsymbol\varphi}]-(\kappa_1+\kappa)Q_{5}[{\boldsymbol\varphi}]\right)\frac{1}{|\ln\varepsilon|}+O(|\ln\varepsilon|^{-1}).
		\end{align}
		By using  Lemma \ref{lemaQb1}, we have
		$$Q_{1}[{\boldsymbol\varphi}]-(\kappa_1+\kappa)Q_{5}[{\boldsymbol\varphi}]=\frac{\mu\pi}{\kappa_1-\kappa}|\ln\varepsilon|+O(1).$$
		This together with \eqref{estCi} implies that 
		$$C^1=1+O(|\ln\varepsilon|^{-1}).$$
		Similary, using Lemmas \ref{lema113D}--\ref{lemaQb1}, and \eqref{deta}, we obtain 
		\begin{align*}
		|C^2|&=\frac{1}{\det\mathbb{A}}\Big(-\mbox{cof}(\mathbb A)_{12} Q_{1}[{\boldsymbol\varphi}]+\mbox{cof}(\mathbb A)_{22} Q_{2}[{\boldsymbol\varphi}]-\mbox{cof}(\mathbb A)_{32}Q_{3}[{\boldsymbol\varphi}]\nonumber\\
		&\qquad\qquad\quad+\mbox{cof}(\mathbb A)_{42} Q_{4}[{\boldsymbol\varphi}]-\mbox{cof}(\mathbb A)_{52} Q_{5}[{\boldsymbol\varphi}]+\mbox{cof}(\mathbb A)_{62} Q_{6}[{\boldsymbol\varphi}] \Big)\nonumber\\
		&\leq \frac{C\varepsilon}{|\ln\varepsilon|^4}\Big(\frac{|\ln\varepsilon|^3}{\varepsilon}+|\ln\varepsilon|^3\Big)\leq \frac{C}{|\ln\varepsilon|}.
		\end{align*}
		The estimates of $|C^\alpha|$, $\alpha=3,\dots,6$, are proved similarly, we thus omit the details.
		
		(b)  For $l_1\geq 1$, we only prove the estimate of $|C^1|$ for instance, since other terms are proved in the same way. It follows from \eqref{estC1}, Cramer's rule,  \eqref{deta}, Lemmas \ref{lema113D}--\ref{lemaQb1} that 
		\begin{align*}
		|C^1|\leq \frac{C\varepsilon}{|\ln\varepsilon|^4}\Big(\frac{|\ln\varepsilon|^3}{\varepsilon}+|\ln\varepsilon|^4\Big)\leq \frac{C}{|\ln\varepsilon|}.
		\end{align*}
		The proof of Proposition \ref{propu17} is finished.
	\end{proof}
	
	\section{Proof of Upper bounds and Lower bounds}\label{sec5}
	
	In this section we prove the upper bounds of $(\nabla{\bf u},p)$ in Subsection \ref{subsec41} and the lower bounds  in Subsection \ref{subsec42}.
	
	\subsection{Upper  bounds: Proof of  Theorem \ref{mainthm0}}\label{subsec41}
	
	In this Subsection, we  complete the proof of  Theorem \ref{mainthm0} by using Propositions \ref{propu11},  \ref{propu15}, and  \ref{propu17}.  
	
	\begin{proof}[Proof of Theorem \ref{mainthm0}.]   
		We only prove the case of ${\boldsymbol\varphi}\in{\bf\Phi}_1$ for instance, since the case  of ${\boldsymbol\varphi}\in{\bf\Phi}_3$ is the same. 
		
		(a1) If $l_1=0$, then we note that ${\bf u}_{1}+{\bf u}_{0}$ takes the same value on the top and bottom boundaries of $\Omega_{2R}$. By using \eqref{divfree1}, and the proof of Propositions \ref{propu11} and  \ref{propu15}, we find that in $\Omega_{R}$,
		$$|\nabla^{k_1}({\bf u}_{1}+{\bf u}_{0})|+|\nabla^{k_2}(p_{1}+p_{0})|\leq C,\quad k_1=1,2,~k_2=0,1.$$
		This together with  \eqref{udecom}  yields 
		$$|\nabla^{k_1}{\bf u}(x)|\leq|C^1-1||\nabla^{k_1}{\bf u}_{1}(x)|+\sum_{\alpha=2}^{6}|C^{\alpha}||\nabla^{k_1}{\bf u}_{\alpha}(x)|+C,$$
		and 
		\begin{equation}\label{formula-p}
		\nabla^{k_2}p(x)=(C^1-1)\nabla^{k_2}p_{1}(x)+\sum_{\alpha=2}^{6}C^{\alpha}\nabla^{k_2}p_{\alpha}(x)+\nabla^{k_2}(p_{1}+p_{0}).
		\end{equation}
		By applying Propositions \ref{propu11},  \ref{propu15}, and  \ref{propu17}, we have
		\begin{align*}
		|\nabla{\bf u}(x)|&\leq \frac{C}{|\ln\varepsilon|\delta(x^\prime)}+C\varepsilon\left(\frac{1}{\delta(x')}+\frac{|x'|}{\delta^2(x')}\right)+C\left(\frac{|x'|}{\delta(x')}+1\right)\\
		&\leq \frac{C(1+|\ln\varepsilon||x'|)}{|\ln\varepsilon|\delta(x')},
		\end{align*}
		and 
		\begin{align*}
		|\nabla^2{\bf u}(x)|\leq \frac{C}{|\ln\varepsilon|\delta^2(x^\prime)}+C\varepsilon\left(\frac{1}{\delta^2(x')}+\frac{|x'|}{\delta^3(x')}\right)+\frac{C}{\delta(x')}\leq \frac{C(1+|\ln\varepsilon||x'|)}{|\ln\varepsilon|\delta^2(x')}.
		\end{align*}
		In order to estimate $|p|$, we first denote 
		\begin{equation*}\label{defq}
		q(x):=\sum_{\alpha=1}^{6}C^\alpha q_\alpha(x)+q_0(x),\quad(q)_{\Omega_R}:=\frac{1}{|\Omega_{R}|}\int_{\Omega_{R}}q(x)\mathrm{d}x,
		\end{equation*}
		where $q_\alpha=p_\alpha-\bar p_\alpha$ and $q_0=p_0-\bar p_0$. Then by using \eqref{formula-p}, Propositions \ref{propu11}, \ref{propu15}, and  \ref{propu17} again, we have
		\begin{align}\label{estpl1=0}
		|p(x)-(q)_{\Omega_R}|&\leq|C^1-1|| p_1(x)-(q_1)_{\Omega_R}|+\sum_{\alpha=2}^{6}|C^\alpha||p_\alpha-(q_\alpha)_{\Omega_R}|+C\nonumber\\
		&\leq \frac{C}{|\ln\varepsilon|\varepsilon^{3/2}},
		\end{align}
		and 
		\begin{align*}
		|\nabla p(x)|\leq|C^1-1||\nabla p_1(x)|+\sum_{\alpha=2}^{6}|C^\alpha||\nabla p_\alpha|+C\leq \frac{C(1+|\ln\varepsilon||x'|)}{|\ln\varepsilon|\delta^2(x^\prime)}.
		\end{align*}
		
		(a2) If $l_1\geq 1$, using \eqref{equ_v4}, Propositions \ref{propu11},  \ref{propu15}, and  \ref{propu17} again, we obtain
		\begin{align*}
		|\nabla{\bf u}(x)|\leq\sum_{\alpha=1}^{6}|C^{\alpha}||\nabla{\bf u}_{\alpha}(x)|+|\nabla{\bf u}_{0}(x)|
		\leq \frac{C(1+|\ln\varepsilon||x'|)}{|\ln\varepsilon|\delta(x')},
		\end{align*}
		\begin{align*}
		|p(x)-(q)_{\Omega_R}|\leq
		\begin{cases}
		\frac{C}{\varepsilon^{3/2}},&l_1=1,\\ \frac{C}{|\ln\varepsilon|\varepsilon^{3/2}},&l_1\geq2,
		\end{cases}
		\end{align*}
		and 
		\begin{align*}
		|\nabla^2{\bf u}(x)|+|\nabla p(x)|\leq 
		\frac{C(1+|\ln\varepsilon|\sqrt{\delta(x')})}{|\ln\varepsilon|\delta^2(x')}.
		\end{align*}
	\end{proof}
	
	\subsection{Lower bounds: Proof of Theorem \ref{main1thm02}}\label{subsec42}
	In this section, we are devoted to the proof of Theorem \ref{main1thm02}. Before proving the main result, we give a lemma whose proof is a slight modification of that in \cite[Lemmas 7.2 and 7.3]{LXZ}, and we omit the details here.
	
	\begin{lemma}\label{lemmatildeQ}
		For $\beta=1,4,5$, as $\varepsilon\to 0$, we have  
		\begin{equation*}
		Q_{1,\beta}[{\boldsymbol\varphi}]\to Q^*_{1,\beta}[{\boldsymbol\varphi}],
		\end{equation*}
		where $Q_{1,\beta}[{\boldsymbol\varphi}]$ and $Q^*_{1,\beta}[{\boldsymbol\varphi}]$ are defined in  \eqref{defQj} and \eqref{defQj*}, respectively.
	\end{lemma}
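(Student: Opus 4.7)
The plan is to adapt the argument of \cite[Lemmas 7.2 and 7.3]{LXZ} to three dimensions. Write $({\bf W}_\varepsilon, P_\varepsilon) := ({\bf u}_0 + {\bf u}_1, p_0 + p_1)$, with candidate limit $({\bf W}^*, P^*) := ({\bf u}_0^* + {\bf u}_1^*, p_0^* + p_1^*)$ on $\Omega^0$. The crux of the proof is the cancellation of the singular parts of the auxiliary functions in the sum: by \eqref{divfree1} and the formula for $\bar p_0^1$ stated just below it, we have ${\bf v}_1 + {\bf v}_0^1 \equiv \boldsymbol\psi_1 = (1,0,0)^{\mathrm T}$ and $\bar p_1 + \bar p_0^1 \equiv 0$ throughout $\Omega_{2R}$. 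Combined with Propositions \ref{propu11} and \ref{propu15} (taking $\alpha = 1$ and $l_1 = 0$, whence $\rho_1^0 = 1$), this yields the uniform bound $|\nabla {\bf W}_\varepsilon| \leq C$ in $\Omega_R$ together with $|\nabla^2 {\bf W}_\varepsilon| + |\nabla P_\varepsilon| \leq C/\delta(x')$ there, while standard Stokes regularity supplies uniform $C^{1,\alpha}$-bounds on $\Omega \setminus \Omega_R$.

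For any compact $K \Subset \overline{\Omega^0} \setminus \{0\}$, $\delta(x')$ is bounded below by a positive constant on a neighborhood of $K$, so the preceding estimates furnish uniform $C^{1,\alpha} \times C^{0,\alpha}$ control of $({\bf W}_\varepsilon, P_\varepsilon - c_\varepsilon^K)$ there, for a $K$-dependent pressure normalization $c_\varepsilon^K$. A diagonal Arzel\`a--Ascoli argument then produces a subsequential limit on $\overline{\Omega^0} \setminus \{0\}$ solving the Stokes system with boundary data $\boldsymbol\psi_1$ on $\partial D_1^0$ and $\boldsymbol\varphi$ on $\partial D$; these agree at the contact point $\{0\}$ (both equal $(1,0,0)^{\mathrm T}$ there), so the trace is continuous across the cusp, and uniqueness of the Stokes problem on $\Omega^0$ forces the limit to coincide with $({\bf W}^*, P^* + c_\infty)$ for some constant $c_\infty$. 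Uniqueness of the limit implies that the entire family converges.

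Finally, for the boundary integral defining $Q_{1,\beta}[{\boldsymbol\varphi}]$, fix small $r > 0$ and split
\[
Q_{1,\beta}[{\boldsymbol\varphi}] = \int_{\partial D_1 \cap B_r} \boldsymbol\psi_\beta \cdot \sigma[{\bf W}_\varepsilon, P_\varepsilon]\nu \, dS + \int_{\partial D_1 \setminus B_r} \boldsymbol\psi_\beta \cdot \sigma[{\bf W}_\varepsilon, P_\varepsilon]\nu \, dS.
\]
Because each $\boldsymbol\psi_\beta$ is a rigid motion and therefore divergence free, $\int_{\partial D_1} \boldsymbol\psi_\beta \cdot \nu \, dS = 0$, which permits an additive pressure renormalization in each integral. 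On the far piece, the Step~2 convergence, combined with the $C^1$-smallness of the perturbation between $\partial D_1 \setminus B_r$ and $\partial D_1^0 \setminus B_r$ via their graph parameterizations, yields convergence to $\int_{\partial D_1^0 \setminus B_r} \boldsymbol\psi_\beta \cdot \sigma[{\bf W}^*, P^*]\nu \, dS$. On the near piece, the uniform Step~1 bound on $e({\bf W}_\varepsilon)$ controls the viscous contribution by $O(r^2)$, while a local pressure normalization (exploiting that $|\nabla P_\varepsilon| \leq C/\delta(x')$ integrates tangentially along $\partial D_1 \cap B_r$ to a contribution vanishing as $r \to 0$) handles the pressure part. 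Sending $\varepsilon \to 0$ first, then $r \to 0$, completes the proof.

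The principal analytic obstacle is the unboundedness of $P_\varepsilon$ in the narrow region, which precludes a naive $L^\infty$-compactness argument for the pressure; it is resolved by the divergence-freeness of $\boldsymbol\psi_\beta$ (permitting local pressure renormalization) together with the cancellation from \eqref{divfree1} that keeps $\nabla {\bf W}_\varepsilon$ uniformly bounded all the way to the contact point. This is precisely the scheme carried out in dimension two in \cite[Lemmas 7.2 and 7.3]{LXZ}.
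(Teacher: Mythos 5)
The paper itself omits the proof of this lemma, citing \cite[Lemmas 7.2 and 7.3]{LXZ}, and your overall scheme (cancellation via \eqref{divfree1}, compactness away from the contact point, splitting the boundary integral into near and far pieces) is exactly the intended adaptation. However, there is a quantitative gap in your treatment of the pressure on the near piece. From Propositions \ref{propu11} and \ref{propu15} as stated you only get $|\nabla P_\varepsilon|\leq C/\delta(x')$, and integrating $1/\delta(x')$ radially across the neck gives an oscillation of order $\varepsilon^{-1/2}$; hence your bound for $\int_{\partial D_1\cap B_r}\boldsymbol\psi_\beta\cdot(P_\varepsilon-c)\nu$ is $O(r^2\varepsilon^{-1/2})$, which blows up in the order of limits you prescribe ($\varepsilon\to0$ first, then $r\to0$). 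The fix is available and is precisely what the paper records in Step (a1) of the proof of Theorem \ref{mainthm0}: since ${\bf v}_1+{\bf v}_0^1\equiv(1,0,0)^{\mathrm T}$ and $\bar p_1+\bar p_0^1\equiv0$, the combined difference ${\bf W}_\varepsilon-(1,0,0)^{\mathrm T}$ solves \eqref{w1} with ${\bf f}\equiv0$ in $\Omega_{2R}$, and re-running Steps 1--3 of Proposition \ref{propu11} for this quantity yields the stronger bounds $|\nabla^2{\bf W}_\varepsilon|+|\nabla P_\varepsilon|\leq C$ and $\inf_c\|P_\varepsilon+c\|_{L^\infty(\Omega_R)}\leq C$. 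With these, the near-piece pressure contribution is $O(r^2)$ uniformly in $\varepsilon$ and your argument closes.

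A second, smaller point: the identity $\int_{\partial D_1}\boldsymbol\psi_\beta\cdot\nu\,dS=0$ licenses subtracting a \emph{single} constant from $P_\varepsilon$ over all of $\partial D_1$, not independent constants on $\partial D_1\cap B_r$ and its complement as you assert ("an additive pressure renormalization in each integral"); two different constants $c_1\neq c_2$ introduce the error $(c_1-c_2)\int_{\partial D_1\cap B_r}\boldsymbol\psi_\beta\cdot\nu\,dS$, which need not vanish. Once you have the uniform bound $|P_\varepsilon-c_\varepsilon|\leq C$ in $\Omega_R$ for a single normalizing constant $c_\varepsilon$, this issue disappears, since the same $c_\varepsilon$ works on both pieces. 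The remaining steps (uniform gradient bound giving an $O(r^2)$ viscous contribution on the near piece, Arzel\`a--Ascoli plus uniqueness on $\Omega^0$ for the far piece) are sound.
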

	
	Using \eqref{aijbj} and the integration by parts, we have 
	\begin{align*}
	a_{\alpha\beta}=\int_{\partial D_{1}}{\boldsymbol\psi}_\beta\cdot\sigma[{\bf u}_\alpha-{\boldsymbol\psi}_\alpha,p_\alpha]\nu=\int_{\partial D}{\boldsymbol\psi}_\alpha\cdot\sigma[{\bf u}_\beta,p_\beta]\nu.
	\end{align*}
	Similarly, 
	\begin{align*}
	a^*_{\alpha\beta}=\int_{\partial D}{\boldsymbol\psi}_\alpha\cdot\sigma[{\bf u}_\beta^{*},p_\beta^{ *}]\nu,
	\end{align*}
	where $a^*_{\alpha\beta}$ is defined in  \eqref{equ_a*}. Following the argument in the proof of \cite[Lemma 7.1]{LXZ}, we get 
	\begin{align*}
	|{\bf u}_4-{\bf u}_4^*|\leq C\varepsilon^{1/2}\quad\mbox{in}~V\setminus\mathcal{C}_{\varepsilon^{1/4}}, 
	\end{align*}
	where $V:=D\setminus\overline{D_{1}\cup D_{1}^0}$ and $$\mathcal{C}_{r}:=\left\{x\in\mathbb R^{3}\big| |x'|<r,~0\leq x_{3}\leq\varepsilon+\kappa_1r^2\right\},\quad r<R.$$
	
	\begin{lemma}\label{convera}
		For $\alpha=1,\dots,6$, as $\varepsilon\to 0$,  we have
		\begin{equation*}
		a_{\alpha 4}\to a^*_{\alpha 4}.
		\end{equation*}
	\end{lemma}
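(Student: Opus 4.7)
My plan is to adapt the argument from \cite[Lemmas 7.2--7.3]{LXZ} to the present three-dimensional setting, combining the boundary integral representation
\[
a_{\alpha 4}-a^*_{\alpha 4}=\int_{\partial D}{\boldsymbol\psi}_\alpha\cdot\bigl(\sigma[{\bf u}_4,p_4-c_\varepsilon]-\sigma[{\bf u}_4^*,p_4^*-c^*]\bigr)\nu\,dS
\]
with the pointwise bound $|{\bf u}_4-{\bf u}_4^*|\leq C\varepsilon^{1/2}$ on $V\setminus\mathcal{C}_{\varepsilon^{1/4}}$ just established above. Since ${\boldsymbol\psi}_\alpha$ is a rigid motion and hence divergence-free on $D$, one has $\int_{\partial D}{\boldsymbol\psi}_\alpha\cdot\nu\,dS=0$, and I am therefore free to subtract arbitrary additive constants $c_\varepsilon,c^*$ from $p_4,p_4^*$; I would choose them so that the two normalized pressures share the same average on a fixed reference region of $V$ lying away from the narrow neck.

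Next, I would split $\partial D=\Sigma_{\rm in}\cup\Sigma_{\rm out}$ with $\Sigma_{\rm in}:=\partial D\cap B_{\varepsilon^{1/4}}$ and treat the two pieces separately. On $\Sigma_{\rm out}$ the pair $({\bf u}_4-{\bf u}_4^*,p_4-p_4^*-(c_\varepsilon-c^*))$ solves the homogeneous Stokes system with zero Dirichlet data in a tubular neighborhood contained in $V\setminus\mathcal{C}_{\varepsilon^{1/4}/2}$. Boundary Schauder estimates applied on balls of radius $\sim\varepsilon^{1/4}$ then upgrade the $L^\infty$ smallness $C\varepsilon^{1/2}$ to $C^1$-control of the velocity difference and $C^0$-control of the normalized pressure difference of order $\varepsilon^{1/4}$, so the contribution from $\Sigma_{\rm out}$ is $O(\varepsilon^{1/4})\to 0$.

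The delicate piece is $\Sigma_{\rm in}$, whose surface area is $O(\varepsilon^{1/2})$ while the pressures may grow as $\varepsilon^{-1/2}$. I would use two ingredients: (i) $a_{44}=O(1)$ by Lemma \ref{lema113D}, which gives uniform $H^1$ control of ${\bf u}_4,{\bf u}_4^*$ and therefore uniform $H^{-1/2}(\partial\Omega)$ control of the normalized tractions; (ii) the rigid-motion structure of ${\boldsymbol\psi}_\alpha$, which allows the decomposition ${\boldsymbol\psi}_\alpha(x)={\boldsymbol\psi}_\alpha(0)+({\boldsymbol\psi}_\alpha(x)-{\boldsymbol\psi}_\alpha(0))$ on $\Sigma_{\rm in}$ with $|{\boldsymbol\psi}_\alpha(x)-{\boldsymbol\psi}_\alpha(0)|\leq C\varepsilon^{1/4}$. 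The variable part, paired against the uniform trace bound, contributes $o(1)$, while the constant part is converted by the divergence theorem applied to $B_{\varepsilon^{1/4}}\cap\Omega$ (and its starred counterpart) into a bulk integral over a shrinking thin region, which vanishes by the uniform energy control. The main obstacle will be this constant-part contribution, because $\Omega$ and $\Omega^0$ differ by the $\varepsilon$-translation of $D_1^0$, so the two bulk domains and the two surfaces $\partial D_1,\partial D_1^0$ are not identical; matching the geometries and showing that the resulting bulk terms truly tend to zero calls for careful bookkeeping along the lines of the two-dimensional argument in \cite{LXZ}.
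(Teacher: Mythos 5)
Your overall strategy is the one the paper intends: the paper itself gives no written proof of this lemma, but the displayed identities $a_{\alpha\beta}=\int_{\partial D}{\boldsymbol\psi}_\alpha\cdot\sigma[{\bf u}_\beta,p_\beta]\nu$, $a^*_{\alpha\beta}=\int_{\partial D}{\boldsymbol\psi}_\alpha\cdot\sigma[{\bf u}_\beta^*,p_\beta^*]\nu$ together with the bound $|{\bf u}_4-{\bf u}_4^*|\leq C\varepsilon^{1/2}$ on $V\setminus\mathcal{C}_{\varepsilon^{1/4}}$ are exactly the ingredients of the \cite{LXZ}-style argument you outline, and your normalization of the pressures using $\int_{\partial D}{\boldsymbol\psi}_\alpha\cdot\nu=0$ and the splitting $\partial D=\Sigma_{\rm in}\cup\Sigma_{\rm out}$ are correct. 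Two of your quantitative claims, however, do not hold as stated. First, on $\Sigma_{\rm out}$ you cannot apply Schauder estimates on balls of radius $\sim\varepsilon^{1/4}$ uniformly: near the annulus $\varepsilon^{1/4}\leq|x'|\leq 2\varepsilon^{1/4}$ the domain $V\setminus\mathcal{C}_{\varepsilon^{1/4}/2}$ only admits balls of radius comparable to the local gap $\delta(x')\sim\varepsilon^{1/2}$, so the rescaled estimate yields $|\nabla({\bf u}_4-{\bf u}_4^*)|\lesssim\varepsilon^{1/2}/\delta(x')$, which is $O(1)$ there rather than $O(\varepsilon^{1/4})$. The conclusion survives only after integrating this local bound over $\Sigma_{\rm out}$ (giving $O(\varepsilon^{1/2}|\ln\varepsilon|)$), so you must work with the gap-dependent radius rather than a fixed one.

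Second, your treatment of $\Sigma_{\rm in}$ via $H^{-1/2}(\partial\Omega)$ duality is shakier than necessary. A uniform $H^{-1/2}$ bound on the normalized traction requires a uniform-in-$\varepsilon$ pressure estimate of the type $\|p_4-c\|_{L^2(\Omega)}\leq C\|\nabla p_4\|_{H^{-1}(\Omega)}$, and the constant in that inequality degenerates as the neck closes; moreover, pairing an $H^{-1/2}$ distribution against ${\boldsymbol\psi}_\alpha-{\boldsymbol\psi}_\alpha(0)$ localized to $\Sigma_{\rm in}$ requires controlling the $H^{1/2}$ norm of the cut-off test function, not merely its $L^\infty$ norm. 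Both difficulties are avoidable: Proposition \ref{propu11} already gives the pointwise bounds $|\nabla{\bf u}_4|\leq C\bigl(1+|x'|/\delta(x')\bigr)$ and $|p_4-(q_4)_{\Omega_R}|\lesssim\delta(x')^{-1/2}$, so a direct integration
\begin{equation*}
\int_{|x'|<\varepsilon^{1/4}}\Bigl(1+\frac{|x'|}{\delta(x')}+\frac{1}{\sqrt{\delta(x')}}\Bigr)\,\mathrm{d}x'\leq C\varepsilon^{1/4}
\end{equation*}
kills the entire contribution of $\Sigma_{\rm in}$, constant part of ${\boldsymbol\psi}_\alpha$ included, with no divergence theorem needed. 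What genuinely remains — and what you correctly flag at the end — is establishing the analogous pointwise (or integrable) bounds for $({\bf u}_4^*,p_4^*)$ near the touching point of the limit domain $\Omega^0$; that is the part that truly requires the bookkeeping of \cite[Lemmas 7.1--7.3]{LXZ} adapted to 3D, and your proposal leaves it as a plan rather than a proof.
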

	
	\begin{proof}[Proof of Theorem \ref{main1thm02}.]  
		We have  from the definition of $k(x)$ in \eqref{def_vx} that
		$$k(0,0,\varepsilon/2)=0.$$
		It follows from \eqref{estv1112} and \eqref{divfree1} that
		\begin{equation*}
		\partial_{x_3}{\bf v}_1^{(1)}(0,0,\varepsilon/2)=\frac{1}{\varepsilon},\quad \partial_{x_3}({\bf v}_{0}^1)^{(1)}(0,0,\varepsilon/2)=-\frac{1}{\varepsilon}.
		\end{equation*}
		Moreover,  from Propositions \ref{propu11} and \ref{propu15}, it follows that
		\begin{align}\label{lower1}
		&|\partial_{x_3}{\bf u}_1^{(1)}(0,0,\varepsilon/2)+\partial_{x_3}{\bf u}_0^{(1)}(0,0,\varepsilon/2)|\nonumber\\\
		&\leq |\partial_{x_3}{\bf v}_1^{(1)}(0,0,\varepsilon/2)+\partial_{x_3}({\bf v}_{0}^1)^{(1)}(0,0,\varepsilon/2)|\nonumber\\
		&\quad+|\partial_{x_3}{\bf w}_1^{(1)}(0,0,\varepsilon/2)+\partial_{x_3}{\bf w}_0^{(1)}(0,0,\varepsilon/2)|\leq C.
		\end{align}
		On the other hand, in view of  Propositions \ref{propu11} and \ref{propu17}, \eqref{v1alpha}, \eqref{v2alpha}, \eqref{v4alpha}, and \eqref{v5alpha}, we obtain
		\begin{equation*}
		\left|\sum_{\alpha=2}^{6}C^\alpha\partial_{x_3}{\bf u}_\alpha^{(1)}(0,0,\varepsilon/2)\right|\leq C.
		\end{equation*}
		This, in combination with \eqref{equ_v4} and \eqref{lower1}, leads to
		\begin{align}\label{Dulower}
		|\nabla{\bf u}(0,0,\varepsilon/2)|&=\left|\sum_{\alpha=1}^{6}C^\alpha\nabla{\bf u}_\alpha(0,0,\varepsilon/2)+\nabla{\bf u}_0(0,0,\varepsilon/2)\right|\nonumber\\
		&\geq \left|\sum_{\alpha=1}^{6}C^\alpha\partial_{x_3}{\bf u}_\alpha^{(1)}(0,0,\varepsilon/2)+\partial_{x_3}{\bf u}_0^{(1)}(0,0,\varepsilon/2)\right|\nonumber\\
		&\geq |C^1\partial_{x_3}{\bf u}_1^{(1)}(0,0,\varepsilon/2)+\partial_{x_3}{\bf u}_0^{(1)}(0,0,\varepsilon/2)|-C\nonumber\\
		&\geq |(C^1-1)\partial_{x_3}{\bf u}_1^{(1)}(0,0,\varepsilon/2)|-C\geq \frac{|C^1-1|}{\varepsilon}-C.
		\end{align}
		From \eqref{defQj} and \eqref{ce}, it follows that 
		$$a_{1\beta}(C^1-1)+\sum_{\alpha=2}^{6}a_{\alpha\beta}C^\alpha=Q_{1,\beta}[{\boldsymbol\varphi}],$$
		where $Q_{1,\beta}[{\boldsymbol\varphi}]$ is defined in \eqref{defQj}. 
		By using Cramer's rule,  \eqref{deta}, and \eqref{cofa11}--\eqref{cofA}, we obtain 
		\begin{align}\label{estC11}
		C^1-1&=\frac{1}{\det\mathbb{A}}\Big(\mbox{cof}(\mathbb A)_{11} Q_{1,1}[{\boldsymbol\varphi}]-\mbox{cof}(\mathbb A)_{21} Q_{1,2}[{\boldsymbol\varphi}]+\mbox{cof}(\mathbb A)_{31}Q_{1,3}[{\boldsymbol\varphi}]\nonumber\\
		&\qquad\qquad-\mbox{cof}(\mathbb A)_{41} Q_{1,4}[{\boldsymbol\varphi}]+\mbox{cof}(\mathbb A)_{51} Q_{1,5}[{\boldsymbol\varphi}]-\mbox{cof}(\mathbb A)_{61} Q_{1,6}[{\boldsymbol\varphi}] \Big)\nonumber\\
		&=\frac{\kappa_1-\kappa}{\mu\pi}\left( Q_{1,1}[{\boldsymbol\varphi}]-(\kappa_1+\kappa) Q_{1,5}[{\boldsymbol\varphi}]-\frac{a_{14}-(\kappa_1+\kappa)a_{54}}{a_{44}}Q_{1,4}[{\boldsymbol\varphi}]\right)\frac{1}{|\ln\varepsilon|}\nonumber\\
		&\quad+O(|\ln\varepsilon|^{-2}).
		\end{align}
		If $Q^*_{1,1}[{\boldsymbol\varphi}]-(\kappa_1+\kappa)Q^*_{1,5}[{\boldsymbol\varphi}]-\frac{a^*_{14}-(\kappa_1+\kappa)a^*_{54}}{a^*_{44}}Q^*_{1,4}[{\boldsymbol\varphi}]\neq 0$, then by using Lemmas \ref{lemmatildeQ} and  \ref{convera}, there exists a small enough constant $\varepsilon_0>0$ such that for $0<\varepsilon<\varepsilon_0$,
		\begin{align*}
		&|Q_{1,1}[{\boldsymbol\varphi}]-(\kappa_1+\kappa)Q_{1,5}[{\boldsymbol\varphi}]-\frac{a_{14}-(\kappa_1+\kappa)a_{54}}{a_{44}}Q_{1,4}[{\boldsymbol\varphi}]|\\
		&\geq \frac{1}{2}|Q^*_{1,1}[{\boldsymbol\varphi}]-(\kappa_1+\kappa)Q^*_{1,5}[{\boldsymbol\varphi}]-\frac{a^*_{14}-(\kappa_1+\kappa)a^*_{54}}{a^*_{44}}Q^*_{1,4}[{\boldsymbol\varphi}]|>0.
		\end{align*}
		Thus, from \eqref{estC11}, we have 
		\begin{align*}
		|C^1-1|\geq\frac{|Q^*_{1,1}[{\boldsymbol\varphi}]-(\kappa_1+\kappa)Q^*_{1,5}[{\boldsymbol\varphi}]-\frac{a^*_{14}-(\kappa_1+\kappa)a^*_{54}}{a^*_{44}}Q^*_{1,4}[{\boldsymbol\varphi}]|}{C}\frac{1}{|\ln\varepsilon|}.
		\end{align*}
		In view of \eqref{Dulower}, we have
		\begin{align*}
		|\nabla{\bf u}(0,0,\varepsilon/2)|\geq\frac{1}{\varepsilon|\ln\varepsilon|}.
		\end{align*}
		The proof of Theorem \ref{main1thm02} is complete.
	\end{proof}

	\section{Ellipsoid suspending particle case}\label{secellipsoid}
	
	In this section, we show that the boundary gradient estimates in Theorem \ref{mainthm0} hold also for the ellipsoid inclusion case. We assume that $D_1$ is an ellipsoid, and near the origin, the part of $\partial D_1$ can be represented by 
	$$x_3=\varepsilon+h_1(x'),\quad\mbox{where}~h_1(x')=\kappa_1x_1^2+\kappa_2x_2^2.$$
	We replace $\delta(x')$ by 
	\begin{equation}\label{newd}
	\delta(x^{\prime})=\varepsilon+(\kappa_1-\kappa)x_1^2+(\kappa_2-\kappa)x_2^2.
	\end{equation}
	The proof is similar to the previous ones, we will only list the main differences in the following.
	
	From the argument in Subsection \ref{secmain}, it follows that the key point is to construct auxiliary functions with the same boundary conditions as ${\bf u}_\alpha$ and ${\bf u}_0$ defined in \eqref{equ_v1} and \eqref{equ_v3}, respectively. For this, we seek ${\bf v}_{1}\in C^{2}(\Omega;\mathbb R^3)$, such that ${\bf v}_{1}={\bf u}_{1}={\boldsymbol\psi}_{1}$ on $\partial{D}_{1}$ and ${\bf v}_{1}={\bf u}_{1}=0$ on $\partial{D}$, 
	\begin{align*}
	{\bf v}_{1}=\boldsymbol\psi_{1}\big(k(x)+\frac{1}{2}\big)+\big({\bf E}_1^{(1)},
	{\bf E}_1^{(2)},{\bf E}_1^{(3)}\big)^{\mathrm T}
	\Big(k^2(x)-\frac{1}{4}\Big)\quad \mbox{in}~\Omega_{2R},
	\end{align*}
	where 
	\begin{align*}
	{\bf E}_1^{(1)}&=-\frac{12(\kappa_1^2-\kappa^2)}{(3\kappa_1+2\kappa_2-5\kappa)}\frac{x_1^2}{\delta(x^\prime)}+\frac{3(\kappa_1+\kappa)}{3\kappa_1+2\kappa_2-5\kappa},\\
	{\bf E}_1^{(2)}&=-\frac{12(\kappa_1+\kappa)(\kappa_2-\kappa)}{(3\kappa_1+2\kappa_2-5\kappa)}\frac{x_1x_2}{\delta(x^\prime)},\\
	{\bf E}_1^{(3)}&=-\delta(x^\prime)\partial_{x_1}k(x){\bf E}_1^{(1)}+\delta(x^\prime)\partial_{x_2}k(x){\bf E}_1^{(2)}+(\kappa_1-\kappa)x_1+2(\kappa_1+\kappa)x_1k(x),
	\end{align*}
	$k(x)$ is defined in \eqref{def_vx} with $\delta(x')$ replaced by \eqref{newd}, and $\|{\bf v}_{1}\|_{C^{2}(\Omega\setminus\Omega_{R})}\leq\,C$.
	We choose $\bar{p}_1\in C^{1}(\Omega)$ such that
	\begin{equation*}
	\bar{p}_1=\frac{6(\kappa_1+\kappa)}{(3\kappa_1+2\kappa_2-5\kappa)}\frac{\mu x_{1}}{\delta^2(x^\prime)}+\mu\partial_{x_3} ({\bf v}_1) ^{(3)}\quad\mbox{in}~\Omega_{2R}.
	\end{equation*}
	It is easy to see that if $\kappa_2=\kappa_1$, then ${\bf v}_{1}$ and $\bar{p}_1$ here are the same as that in \eqref{v1alpha} and \eqref{defp113D} in Section \ref{sec2outline2D}, respectively. The construction for the case of $\alpha=2,3,5,6$ is  a slight modification as before. Thus, using the iteration technique  presented in Section \ref{sec2outline2D}, Proposition \ref{propu11} also holds except for $\alpha=4$.
	
	Next we construct ${\bf v}_{4}\in C^{2}(\Omega;\mathbb R^3)$ satisfying,
	\begin{align*}
	{\bf v}_{4}=\boldsymbol\psi_{4}\big(k(x)+\frac{1}{2}\big)+\big(-3x_2,3x_1,H_4\big)^{\mathrm T}
	\Big(k^2(x)-\frac{1}{4}\Big)\quad \mbox{in}~\Omega_{2R},
	\end{align*}
	where $$H_4=-(\kappa_2-\kappa_1)(2k(x)+1)x_1x_2+\delta(x^\prime)\Big(x_2\partial_{x_1}k(x)-x_1\partial_{x_2}k(x)\Big).$$
	We choose $\bar{p}_4=0$. By a direct calculation, we obtain in $\Omega_{2R}$,
	\begin{align*}
	\nabla\cdot{\bf v}_4=0,\quad |\nabla{\bf v}_4|\leq \frac{|x'|}{\delta(x')},\quad |\mu\Delta{\bf v}_4-\nabla\bar{p}_4|\leq \frac{C|x^\prime|}{\delta^2(x^\prime)}.
	\end{align*}
	Then by applying the energy method  and the iteration technique,  instead of the result of $\alpha=4$ in Proposition \ref{propu11}, we have the following result.
	
	\begin{prop}\label{propu13new}
		Let ${\bf u}_{4}\in{C}^{2}(\Omega;\mathbb R^3),~p_{4}\in{C}^{1}(\Omega)$ be the solution to \eqref{equ_v1}. Then we have for $x\in\Omega_{R}$,
		\begin{equation*}
		\|\nabla({\bf u}_{4}-{\bf v}_{4})\|_{L^{\infty}(\Omega_{\delta/2}(x^\prime))}\leq \frac{C}{\sqrt{\delta(x^\prime)}},
		\end{equation*}
		and
		\begin{equation*}
		\|\nabla^2({\bf u}_{4}-{\bf v}_{4})\|_{L^{\infty}(\Omega_{\delta/2}(x^\prime))}+\|q_4-(q_4)_{\Omega_R}\|_{L^{\infty}(\Omega_{\delta/2}(x^\prime))}\leq \frac{C}{\delta^{3/2}(x^\prime)}.
		\end{equation*}
		Consequently, in $\Omega_R$,
		\begin{align*}
		|\nabla {\bf u}_4(x)|\leq \frac{C}{\sqrt{\delta(x^\prime)}},~|p_4(x)-(q_4)_{\Omega_R}|\leq\frac{C}{\varepsilon^{3/2}},~|\nabla p_4(x)|+|\nabla^2{\bf u}_4(x)|\leq \frac{C}{\delta^{3/2}(x^\prime)}.
		\end{align*}
	\end{prop}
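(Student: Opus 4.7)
\medskip

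The proof will follow the three-step energy-iteration-interior scheme already used for Proposition \ref{propu11} and Proposition \ref{propu15}, with the only change being the new pointwise bound $|\mu\Delta{\bf v}_4-\nabla\bar p_4|\le C|x'|/\delta^2(x')$. Write ${\bf w}:={\bf u}_4-{\bf v}_4$ and $q:=p_4$ (recall $\bar p_4=0$). From \eqref{equ_v1} and the properties listed right before the statement, $({\bf w},q)$ solves a Stokes system of the form \eqref{w1} with right-hand side ${\bf f}:=\mu\Delta{\bf v}_4-\nabla\bar p_4$, vanishing boundary data, and $\nabla\cdot{\bf w}=0$ in $\Omega_{2R}$.

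The first step is the global bound $\int_\Omega|\nabla{\bf w}|^2\le C$. Testing the equation with ${\bf w}$ and integrating by parts in $x'$ as in Step 1 of the proof of Proposition \ref{propu11}, the key quantity $\int_{\Omega_R}{\bf f}\cdot{\bf w}$ reduces, after transferring one derivative from ${\bf v}_4$ to ${\bf w}$ and picking up a boundary term on $\{|x'|=R\}$ (controlled by the Sobolev trace inequality as in \eqref{w1Dw1}), to an integral of $|\nabla_{x'}{\bf w}|\,|\nabla{\bf v}_4|$. Using $|\nabla{\bf v}_4|\le C|x'|/\delta(x')$ (listed before the proposition), a Cauchy--Schwarz estimate and the fact that $\int_{\Omega_R}|x'|^2/\delta^2(x')\,dx\le C$ close the argument.

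The second step is the local energy estimate
$$\int_{\Omega_\delta(z')}|\nabla{\bf w}|^2\,dx\le C\delta^2(z'),\qquad |z'|\le R,$$
obtained by iterating the Caccioppoli-type inequality \eqref{iterating1}. With $|{\bf f}|^2\le C|x'|^2/\delta^4$, a direct calculation on a slice $\Omega_s(z')$ gives $\int_{\Omega_s(z')}|{\bf f}|^2\le C s^2(|z'|+s)^2/\delta^3(z')$, so that \eqref{iterating1} becomes exactly the iteration step used for the case $l_1=1$ in the proof of Proposition \ref{propu15}. Choosing $k_0\sim[1/(c_1\sqrt{\delta(z')})]$ steps with $t_i=\delta(z')+2c_1i\delta(z')$ and combining with the global bound from Step 1 produces the claimed $\delta^2(z')$ decay.

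The third step plugs the above $L^2$-estimate together with $\|{\bf f}\|_{L^\infty(\Omega_\delta(z'))}\le C/\delta^{3/2}(z')$ and $\|\nabla{\bf f}\|_{L^\infty(\Omega_\delta(z'))}\le C/\delta^2(z')$ (both following from $|x'|\le C\sqrt{\delta(z')}$ in $\Omega_\delta(z')$) into the rescaled Stokes interior regularity bound \cite[Proposition 3.6]{LX}; this yields $\|\nabla{\bf w}\|_{L^\infty(\Omega_{\delta/2}(z'))}\le C/\sqrt{\delta(z')}$ and $\|\nabla^2{\bf w}\|+\|\nabla q\|\le C/\delta^{3/2}(z')$. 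Adding back the explicit bounds $|\nabla{\bf v}_4|\le C|x'|/\delta\le C/\sqrt\delta$ and $|\nabla^2{\bf v}_4|\le C/\delta^{3/2}$ (direct computation from the formulas for ${\bf v}_4$ and $H_4$) gives the stated estimates on ${\bf u}_4$ and $\nabla^2{\bf u}_4$. For the pressure, since $\bar p_4\equiv 0$ one has $p_4=q_4$, and $|p_4(x)-(q_4)_{\Omega_R}|\le C/\varepsilon^{3/2}$ follows from the mean-value theorem applied to $q_4$ (yielding a point $y^\ast\in\Omega_R$ with $q_4(y^\ast)=(q_4)_{\Omega_R}$) together with integration of $|\nabla q_4|\le C/\delta^{3/2}$ along a path from $y^\ast$ to $x$ inside $\Omega_R$, exactly as at the end of the proof of Proposition \ref{propu11}.

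The only genuinely delicate point is Step 2: the improved source-term bound $\int_{\Omega_s(z')}|{\bf f}|^2\le Cs^2(|z'|+s)^2/\delta^3(z')$ forces the iteration to terminate after $O(1/\sqrt{\delta(z')})$ steps with the slightly degraded $\delta^2(z')$ energy decay (instead of the $\delta^3(z')$ decay available for $\alpha=1,2,5,6$), and this is precisely what accounts for the extra factor $1/\sqrt{\delta(x')}$ appearing in Proposition \ref{propu13new} relative to Proposition \ref{propu11}.
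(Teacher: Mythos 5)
Your proposal is correct and is exactly the three-step energy--iteration--interior scheme that the paper invokes for this proposition (the paper itself writes out no details here, merely citing "the energy method and the iteration technique"), and your quantitative bookkeeping --- the source bound $Cs^2(|z'|+s)^2/\delta^3(z')$, the degraded local energy decay $\delta^2(z')$, and the resulting $1/\sqrt{\delta}$ and $1/\delta^{3/2}$ rates --- all checks out. One small point: since $\bar p_4\equiv 0$, the terms $\partial_{x_3x_3}{\bf v}_4^{(j)}$ are no longer cancelled by a pressure gradient as in the proof of Proposition \ref{propu11}, so in Step 1 you must also transfer the $x_3$-derivative onto ${\bf w}$ (with no boundary contribution, as ${\bf w}$ vanishes on the top and bottom of $\Omega_R$), not only the tangential ones; this still reduces everything to $\int|\nabla{\bf w}|\,|\nabla{\bf v}_4|$ with $|\nabla{\bf v}_4|\le C|x'|/\delta\in L^2(\Omega_R)$, so the conclusion is unaffected.
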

	
	Therefore, with the preparations above, by mimicking the process in the previous sections, the boundary gradient estimates in Theorem \ref{mainthm0} hold for the ellipsoid inclusion and the corresponding estimates for $|p|$ can also be obtained. We omit the details here.

	\section{Proof of Theorem \ref{thmhigher}}\label{prfhigher4}
	This section is devoted to the proof of Theorem \ref{thmhigher}. From \eqref{udecom}, one can see that it suffices to estimate $C^\alpha$, $(\nabla {\bf u}_\alpha,p_\alpha)$, and $(\nabla{\bf u}_0,p_0)$, where $\alpha=1,\dots,\frac{d(d+1)}{2}$. To derive the estimates of $(\nabla {\bf u}_\alpha,p_\alpha)$, we construct ${\bf v}_{\alpha}\in C^{2}(\Omega;\mathbb R^d)$, such that ${\bf v}_{\alpha}={\bf u}_{\alpha}={\boldsymbol\psi}_{\alpha}$ on $\partial{D}_{1}$ and ${\bf v}_{\alpha}={\bf u}_{\alpha}=0$ on $\partial{D}$, especially,
	\begin{align*}
	{\bf v}_{\alpha}=\boldsymbol\psi_{\alpha}\big(k(x)+\frac{1}{2}\big)+{\bf E}_\alpha(x)
	\Big(k^2(x)-\frac{1}{4}\Big),\quad\alpha=1,\dots, d-1,\quad \mbox{in}~\Omega_{2R},
	\end{align*}
	and $\|{\bf v}_{\alpha}\|_{C^{2}(\Omega\setminus\Omega_{R})}\leq\,C$, where $k(x)$ is defined in \eqref{def_vx} with $x_d$ in place of $x_3$,
	\begin{align*}
	{\bf E}_\alpha(x)&=F_\alpha(x) e_\alpha+\sum_{i\neq \alpha, d} G_i(x) e_i\\
	&\quad+\big(H_\alpha(x)-\sum_{i\neq \alpha ,d}\delta(x^\prime)\partial_{x_i}k(x)G_i(x)-\delta(x^\prime)\partial_{x_\alpha}k(x)F_\alpha(x)\big) e_d,
	\end{align*}
	here $\delta(x^\prime)$ is defined in \eqref{55}, 
	\begin{align*}
	\begin{split}
	F_\alpha(x)&:=\frac{12}{1-2d}\frac{(\kappa_1+\kappa)x_\alpha^2}{\delta(x^\prime)}-\frac{3}{1-2d}\frac{\kappa_1+\kappa}{\kappa_1-\kappa},\quad
	G_i(x):=\frac{12}{1-2d}\frac{(\kappa_1+\kappa)x_\alpha x_i}{\delta(x^\prime)},\\
	H_\alpha(x)&:=(\kappa_1-\kappa)x_\alpha+2(\kappa_1+\kappa)x_\alpha k(x).
	\end{split}
	\end{align*}    
	We choose $\bar{p}_\alpha\in C^{1}(\Omega)$ such that 
	\begin{equation*}
	\bar{p}_\alpha=\frac{6\mu}{2d-1}\frac{\kappa_1+\kappa}{\kappa_1-\kappa}\frac{ x_{\alpha}}{\delta^2(x^\prime)}+\mu\partial_{x_d} {\bf v}_{\alpha}^{(d)},\quad\alpha=1,\dots,d-1,\quad\mbox{in}~\Omega_{2R},
	\end{equation*}
	and  $\|\bar{p}_\alpha\|_{C^{1}(\Omega\setminus\Omega_{R})}\leq C$.
	
	For $\alpha=d$, we seek 
	\begin{align*}
	{\bf v}_{d}=\boldsymbol\psi_{d}\big(k(x)+\frac{1}{2}\big)+{\bf E}_d(x)
	\Big(k^2(x)-\frac{1}{4}\Big),\quad \mbox{in}~\Omega_{2R},
	\end{align*}
	and $\|{\bf v}_{d}\|_{C^{2}(\Omega\setminus\Omega_{R})}\leq\,C$, where
	\begin{align*}
	{\bf E}_d(x)=\sum_{i=1}^{d-1}\frac{6x_i}{(d-1)\delta(x^\prime)} e_i+\big(-2k(x)+\sum_{i=1}^{d-1}\frac{6x_i}{(d-1)\delta(x^\prime)}\tilde H_i(x) \big)e_d ,
	\end{align*}
	with
	\begin{align*}
	\tilde H_i=(\kappa_1+\kappa)x_i+2(\kappa_1-\kappa)x_i k(x),
	\end{align*}
	and the associated $\bar{p}_d\in C^{1}(\Omega)$ satisfying, in $\Omega_{2R}$,
	\begin{equation*}
	\bar{p}_d=-\frac{3\mu}{(d-1)(\kappa_1-\kappa)\delta^2(x^\prime)}+\mu\partial_{x_d} {\bf v}_d ^{(d)}.
	\end{equation*} 
	For $\alpha=d+1,\dots,\frac{d(d-1)}{2}$, the construction of ${\bf v}_{4}\in C^{2}(\Omega;\mathbb R^d)$  is as follows:
	\begin{align*}
	{\bf v}_{\alpha}=\boldsymbol\psi_{\alpha}\big(k(x)+\frac{1}{2}\big)\quad\mbox{in}~\Omega_{2R},
	\end{align*}
	and here we can directly take $\bar{p}_\alpha=0$.
	
	Finally, for $\alpha=\frac{d(d-1)}{2}+1,\dots,\frac{d(d+1)}{2},$  we define 
	the auxiliary function ${\bf v}_{\alpha}\in C^{2}(\Omega;\mathbb R^d)$ satisfying
	\begin{align*}
	{\bf v}_{\alpha}=\boldsymbol\psi_{\alpha}\big(k(x)+\frac{1}{2}\big)+{\bf E}_\alpha(x)
	\Big(k^2(x)-\frac{1}{4}\Big),\quad \mbox{in}~\Omega_{2R},
	\end{align*}
	and $ \|{\bf v}_{\alpha}\|_{C^{2}(\Omega\setminus\Omega_{R})}\leq\,C$, where 
	\begin{align*}
	{\bf E}_\alpha(x)&=F_\alpha e_\alpha + \sum_{i\neq \alpha,d}\tilde G_i(x) e_i + \big(2k(x)x_1+\tilde H_{\frac{d(d-1)}{2}-1}(x)\Big(F_\alpha(x)+2k(x)x_d\Big)\\
	&\quad-\sum_{i\neq \alpha, d}\delta(x^\prime)\partial_{x_i}k(x)\tilde G_i(x)\big)e_d,
	\end{align*}
	with
	\begin{align*}
	F_\alpha(x)= \frac{12}{2d-1}\frac{x_{\alpha-\frac{d(d-1)}{2}-1}^2}{\delta(x^\prime)}-2k(x)x_3-\frac{3x_3^2}{\delta(x^\prime)}+\frac{3}{(2d-1)(\kappa_1-\kappa)},
	\end{align*}
	and 
	\begin{align*}
	\tilde G_i(x)=\frac{12}{1-2d}\frac{x_ix_{\alpha-\frac{d(d-1)}{2}-1}^2}{\delta(x^\prime)}.
	\end{align*}
	Then we choose $\bar{p}_\alpha\in C^{1}(\Omega)$ in $\Omega_{2R}$ such that
	\begin{equation*}
	\bar{p}_\alpha=\frac{6\mu}{(2d-1)(\kappa_1-\kappa)}\frac{ x_{\alpha-\frac{d(d-1)}{2}-1}}{\delta^2(x^\prime)}+\mu\partial_{x_d} {\bf v}_{\alpha}^{(d)},\quad \mbox{in}~\Omega_{2R}.
	\end{equation*}
	
	Then we can obtain the following result by repeating the argument as in the proof of Proposition \ref{propu11}.
	
	\begin{prop}\label{propud4}
		Let ${\bf u}_{\alpha}\in{C}^{2}(\Omega;\mathbb R^d)$, $p_{\alpha}\in{C}^{1}(\Omega)$ be the solution to \eqref{equ_v1} with $\alpha=1,\dots,\frac{d(d+1)}{2}$. Then in $\Omega_{R}$, 
		\begin{equation*}
		\|\nabla({\bf u}_{\alpha}-{\bf v}_{\alpha})\|_{L^{\infty}(\Omega_{\delta/2}(x'))}\leq 
		\begin{cases}
		C,&\alpha=1,\dots,d-1,\frac{d(d-1)}{2}+2,\dots,\frac{d(d+1)}{2},\\
		\frac{C}{\sqrt{\delta(x^\prime)}},&\alpha=d,\\
		C\sqrt{\delta(x^\prime)},&\alpha=d+1,\dots,\frac{d(d-1)}{2}+1,
		\end{cases}
		\end{equation*}
		and 
		\begin{equation*}
		\|\nabla q_\alpha\|_{L^{\infty}(\Omega_{\delta/2}(x'))}\leq
		\begin{cases}
		\frac{C}{\delta(x^\prime)},&\alpha=1,\dots,d-1,\frac{d(d-1)}{2}+2,\dots,\frac{d(d+1)}{2},\\
		\frac{C}{\delta^{3/2}(x^\prime)},&\alpha=d,\\
		\frac{C}{\sqrt{\delta(x^\prime)}},&\alpha=d+1,\dots,\frac{d(d-1)}{2}+1.
		\end{cases}
		\end{equation*}
		Consequently, in $\Omega_{R}$,
		\begin{align*}
		|\nabla {\bf u}_{\alpha}(x)|\leq 
		\begin{cases}
		\frac{C}{\delta(x^\prime)},&\alpha=1,\dots,d-1,\frac{d(d-1)}{2}+2,\dots,\frac{d(d+1)}{2},\\
		C\left(\frac{1}{\delta(x^\prime)} +\frac{|x^\prime|}{\delta^2(x^\prime)}\right),&\alpha=d,\\
		C\left(\frac{|x^\prime|}{\delta(x^\prime)} +1\right),&\alpha=d+1,\dots,\frac{d(d-1)}{2}+1,
		\end{cases}
		\end{align*}
		and
		$$|p_{\alpha}(x)-(q_{\alpha})_{\Omega_R}|\leq
		\begin{cases}
		\frac{C}{\varepsilon^{3/2}},&\alpha=1,\dots,d-1,\frac{d(d-1)}{2}+2,\dots,\frac{d(d+1)}{2},\\
		\frac{C}{\varepsilon^{2}},&\alpha=d,\\
		\frac{C}{\sqrt{\varepsilon}},&\alpha=d+1,\dots,\frac{d(d-1)}{2}+1,
		\end{cases}$$
		where $(q_{\alpha})_{\Omega_R}$ is defined in \eqref{defqalpha} with $\alpha=1,\dots,\frac{d(d+1)}{2}$. 
	\end{prop}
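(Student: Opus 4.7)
The plan is to follow the three-step strategy used in the proof of Proposition \ref{propu11}, with bookkeeping adjusted to dimension $d$ and to the four structurally distinct families of indices: the $(d-1)$ translational modes $\alpha=1,\dots,d-1$; the single ``vertical'' translation $\alpha=d$; the purely in-plane rotations $\alpha=d+1,\dots,\frac{d(d-1)}{2}+1$; and the ``mixed'' rotations $\alpha=\frac{d(d-1)}{2}+2,\dots,\frac{d(d+1)}{2}$ that involve the $x_d$-direction. For each family I first verify the analogue of Lemma \ref{lemauxi}, namely that $\nabla\cdot {\bf v}_\alpha=0$ in $\Omega_{2R}$ together with the pointwise bounds on $|\nabla{\bf v}_\alpha|$ and on the reduced residual $|\mu\Delta{\bf v}_\alpha-\nabla\bar p_\alpha|$ claimed implicitly by the statement of Proposition \ref{propud4}. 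The divergence-free property follows from the explicit choice of $H_\alpha$ (and, in the rotational-mixed case, $\tilde H_i$), which is precisely the correction that cancels $\sum_{i<d}\partial_{x_i}$ of the first two components. The key cancellation in the residual comes from the choice $\bar p_\alpha=(\text{explicit term})+\mu\partial_{x_d}{\bf v}_\alpha^{(d)}$, which kills the $\partial_{x_dx_d}{\bf v}_\alpha^{(d)}-\partial_{x_d}\bar p_\alpha$ contribution exactly, and from the leading coefficients $\pm 12/(2d-1)$ and $\pm 6/(2d-1)$ in ${\bf E}_\alpha$ chosen so that the leading $\delta(x')^{-2}$ term in $\mu\partial_{x_dx_d}{\bf v}_\alpha^{(i)}-\partial_{x_i}\bar p_\alpha$ vanishes for $i\neq d$.

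Second, set ${\bf w}={\bf u}_\alpha-{\bf v}_\alpha$, $q=p_\alpha-\bar p_\alpha$. Then $({\bf w},q)$ satisfies the Stokes system \eqref{w1} with forcing ${\bf f}=\mu\Delta{\bf v}_\alpha-\nabla\bar p_\alpha$, zero boundary data on $\partial\Omega$, and $\nabla\cdot{\bf w}=0$ in $\Omega_{2R}$. Following Step~1 of the proof of Proposition \ref{propu11}, the global energy bound $\int_\Omega|\nabla{\bf w}|^2\,dx\leq C$ reduces via integration by parts in the tangential variables $x'$ to the inequality $\bigl|\int_{\Omega_R}{\bf f}\cdot{\bf w}\,dx\bigr|\leq C(\int_\Omega|\nabla{\bf w}|^2)^{1/2}$, which is verified by combining the Sobolev trace bound on the lateral face $\{|x'|=R\}$ with the estimates $|\partial_{x_i}{\bf v}_\alpha^{(j)}|\leq C|x'|/\delta(x')$ for $i,j\neq d$ (and the analogous $|\partial_{x_d}{\bf v}_\alpha^{(d)}|$), exactly as in the three-dimensional argument.

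Third, I would run the iteration scheme of \cite[Lemma 3.10]{LX} on the cylinders $\Omega_t(z')$, using the residual bounds from the first step to estimate $\int_{\Omega_s(z')}|{\bf f}|^2\,dx$. The key difference from the $d=3$ case is that $|\Omega_s(z')|\sim s^{d-1}\delta(z')$, so the iteration output is $\int_{\Omega_\delta(z')}|\nabla{\bf w}|^2\,dx\leq C\delta^{d}(z')$ in the first and last (``translational'') groups, $C\delta^{d+2}(z')$ for the in-plane rotations, and $C\delta^{d-2}(z')$ for the vertical translation $\alpha=d$. Inserting these energy bounds together with the residual bounds into the local $W^{1,\infty}$ and $W^{2,\infty}$ estimates of \cite[Proposition 3.6]{LX} on $\Omega_{\delta/2}(z')$ yields the pointwise bounds on $\|\nabla{\bf w}\|_{L^\infty}$ and $\|\nabla q\|_{L^\infty}$ asserted by the proposition. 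The pointwise bounds on $|\nabla{\bf u}_\alpha|$ then follow from the triangle inequality with the $|\nabla{\bf v}_\alpha|$ bounds, and the pressure bound $|p_\alpha-(q_\alpha)_{\Omega_R}|$ comes from the mean-value inequality $|q_\alpha-(q_\alpha)_{\Omega_R}|\leq C\mathrm{diam}(\Omega_R)\|\nabla q\|_{L^\infty}$ combined with the explicit size of $|\bar p_\alpha|$, which is dominated by the $\delta(x')^{-2}$ term; evaluating at the narrowest point $|x'|=0$, $x_d=\varepsilon/2$ gives the claimed powers $\varepsilon^{-3/2}$, $\varepsilon^{-2}$, and $\varepsilon^{-1/2}$.

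The main obstacle is the careful constant-tracking in the iteration step: because the cross-sectional measure in $x'$ scales as $s^{d-1}$ rather than $s^2$, the geometric constant $c_0$ and the step-count $k_0\sim|\delta(z')|^{-1/2}$ must be re-tuned, and the energy growth of the forcing integral $\int_{\Omega_s(z')}|{\bf f}|^2$ must be balanced against the $(c_0\delta(z')/(s-t))^2$ factor in the iteration inequality so that the right-hand side telescopes to the target exponent $d$ (or $d\pm 2$) in $\delta(z')$. In particular, the rotational modes $\alpha=d+1,\dots,\frac{d(d-1)}{2}+1$ are handled by essentially the same argument with $\bar p_\alpha\equiv 0$, so the associated bound on $|p_\alpha-(q_\alpha)_{\Omega_R}|$ is driven entirely by $\|\nabla q\|_{L^\infty}\leq C/\sqrt{\delta(x')}$ and not by the auxiliary pressure, which explains the weaker $\varepsilon^{-1/2}$ scaling.
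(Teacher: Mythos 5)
Your overall strategy coincides with the paper's: the paper's proof of Proposition \ref{propud4} consists precisely of constructing the $d$-dimensional auxiliary pairs $({\bf v}_\alpha,\bar p_\alpha)$ and then repeating verbatim the three steps of the proof of Proposition \ref{propu11} (divergence-free check and residual bounds, global energy estimate, iteration plus the local $W^{1,\infty}$/$W^{2,\infty}$ estimates of \cite[Proposition 3.6]{LX}), which is exactly what you propose.

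There is, however, a concrete slip in your iteration output that would break the case $\alpha=d$ as written. Since $|\Omega_s(z')|\sim s^{d-1}\delta(z')$ and the local estimate of \cite[Proposition 3.6]{LX} scales as $\|\nabla{\bf w}\|_{L^{\infty}(\Omega_{\delta/2}(z'))}\leq C\delta^{-d/2}(z')\|\nabla{\bf w}\|_{L^{2}(\Omega_{\delta}(z'))}+C\delta(z')\|{\bf f}\|_{L^{\infty}}$, the claimed conclusion $\|\nabla({\bf u}_d-{\bf v}_d)\|_{L^\infty}\leq C\delta^{-1/2}(x')$ requires the local energy $\int_{\Omega_\delta(z')}|\nabla{\bf w}|^2\leq C\delta^{d-1}(z')$, not $C\delta^{d-2}(z')$ as you assert: with your exponent the first term gives only $\delta^{-d/2}\cdot\delta^{(d-2)/2}=\delta^{-1}$, which is too weak and would propagate a loss of $\varepsilon^{-1/2}$ into the pressure bound for $\alpha=d$. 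The correct exponent does come out of the scheme once you use that on the relevant cylinders $s\leq C\sqrt{\delta(z')}$ and $|z'|\leq C\sqrt{\delta(z')}$, so that $\int_{\Omega_s(z')}|{\bf f}|^2\leq C(|z'|+s)^2 s^{d-1}\delta^{-3}(z')\leq Cs^{d-1}\delta^{-2}(z')$ and the additive term in the iteration inequality is $C\delta^2(z')\cdot s^{d-1}\delta^{-2}(z')\sim i^{d-1}\delta^{d-1}(z')$; it appears you instead took $|x'|\sim\delta$ rather than $|x'|\lesssim\sqrt{\delta}$. The same off-by-one occurs in your exponent $\delta^{d+2}$ for the in-plane rotations (the correct value is $\delta^{d+1}$), but there it is harmless because the forcing term $C\delta\|{\bf f}\|_{L^\infty}\leq C\sqrt{\delta}$ dominates the final bound in any case. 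With these exponents corrected, the remainder of your argument (triangle inequality with $|\nabla{\bf v}_\alpha|$, and the oscillation bound for $q_\alpha$ combined with the explicit size of $\bar p_\alpha$ evaluated at the neck) delivers the stated conclusions.
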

	
	For the case of $\boldsymbol\varphi=x_1^l e_1$, Proposition \ref{propu15} also holds for $d\geq4$. Now we are ready to complete the proof of Theorem \ref{thmhigher}.
	
	\begin{proof}[Proof of Theorem \ref{thmhigher}]
		Denote
		\begin{equation*}
		q(x):=\sum_{\alpha=1}^{\frac{d(d+1)}{2}}C^\alpha q_\alpha(x)+q_0(x),\quad(q)_{\Omega_R}:=\frac{1}{|\Omega_{R}|}\int_{\Omega_{R}}q(x)\mathrm{d}x,
		\end{equation*}
		where $q_\alpha=p_\alpha-\bar p_\alpha$ and $q_0=p_0-\bar p_0$.
		By  \eqref{udecom}, the boundedness of $C^\alpha$, Propositions \ref{propud4} and \ref{propu15}, we have \eqref{uppD4}.
	\end{proof}

	\noindent{\bf{\large Acknowledgements.}}
	H.G. Li was partially supported by NSF of China (11971061).


\begin{thebibliography}{2}	   
		
		\bibitem{ADN}
		S. Agmon; A. Douglis; L. Nirenberg, Estimates Near the Boundary for Solutions of Elliptic Partial Differential Equations Satisfying General Boundary Conditions II. Comm.Pure Appl. Math., 17 (1964), 35--92.
		
		\bibitem{ABTV}
		H. Ammari; E. Bonnetier; F. Triki; M. Vogelius, Elliptic estimates in composite media with smooth inclusions: an integral equation approach. Ann. Sci. c. Norm. Supr. (4) 48 (2015), no. 2, 453--495.
		
		\bibitem{AKKY} 
		H. Ammari; H. Kang; D. Kim; S. Yu, Quantitative estimates for stress concentration of the Stokes flow between adjacent circular cylinders. (2020) arXiv:2003.06578.
		
		\bibitem{AKL3} 
		H. Ammari; H. Kang; H. Lee; J. Lee; M. Lim, Optimal estimates for the electrical field in two dimensions.  J. Math. Pures Appl. 88 (2007) 307--324.
		
		\bibitem{AKLLZ} 
		H. Ammari; H. Kang; H. Lee; M. Lim; H. Zribi, Decomposition theorems and fine estimates for electrical fields in the presence of closely located circular inclusions. J. Differential
		Equations 247 (2009), no. 11, 2897--2912.
		
		\bibitem{AKL} 
		H. Ammari; H. Kang; M. Lim, Gradient estimates to the conductivity problem. Math. Ann. 332 (2005) 277--286.
		
		\bibitem{Bab} 
		I. Babu\u{s}ka; B. Andersson; P. Smith; K. Levin, Damage analysis of fiber composites. I. Statistical analysis on fiber scale. Comput. Methods Appl. Mech. Eng. 172 (1999) 27--77.
		
		\bibitem{bly1} 
		E.S. Bao; Y.Y. Li; B. Yin, Gradient estimates for the perfect conductivity problem. Arch. Ration. Mech. Anal. 193 (2009), 195--226.
		
		\bibitem{bly2} 
		E. Bao; Y.Y. Li; B. Yin, Gradient estimates for the perfect and insulated conductivity problems with multiple inclusions. Comm. Partial Differential Equations 35 (2010), 1982--2006.
		
		\bibitem{bt} 
		E. Bonnetier; F. Triki, On the spectrum of the Poincar\'{e} variational problem for two close-to-touching inclusions in $2D$. Arch. Ration. Mech. Anal. 209 (2013), no. 2, 541--567.
		
		\bibitem{BJL}  
		J.G. Bao; H.J. Ju;  H.G. Li, Optimal boundary gradient estimates for Lam\'{e} system with partially infinite coefficients. Adv. Math. 314 (2017), 583--629.
		
		\bibitem{BLL}  
		J.G. Bao; H.G. Li; Y.Y. Li, Gradient estimates for solutions of the Lam\'{e} system with partially infinite coefficients. Arch. Ration. Mech. Anal. 215 (2015), 307--351.
		
		\bibitem{BLL2} 
		J.G. Bao; H.G. Li; Y.Y. Li, Gradient estimates for solutions of the Lam\'e system with partially infinite coefficients in dimensions greater than two. Adv. Math. 305 (2017), 298--338.
		
		\bibitem{BT} 
		E. Bonnetier; F. Triki, On the spectrum of the Poincar\'e variational problem for two close-to-touching inclusions in 2D. Arch. Ration. Mech. Anal. 209 (2013) 541--567.
		
		\bibitem{BV} 
		E. Bonnetier; M. Vogelius, An elliptic regularity result for a composite medium with ``touching" fibers of circular cross-section. SIAM J. Math. Anal., 31 (2000), pp. 651-677
		
		\bibitem{BC} 
		B. Budiansky; G.F. Carrier, High shear stresses in stiff fiber composites. J. Appl. Mech. 51 (1984) 733--735.
		
		\bibitem{CS1} 
		G. Ciraolo; A. Sciammetta, Gradient estimates for the perfect conductivity problem in anisotropic media.  J. Math. Pures Appl. 127 (2019) 268--298.
		
		\bibitem{CS2} 
		G. Ciraolo; A. Sciammetta, Stress concentration for closely located inclusions in nonlinear perfect conductivity problems. J. Differ. Equ. 266 (2019) 6149--6178.
		
		\bibitem{DLY1} 
		H.J. Dong; Y.Y. Li; Z.L. Yang, Optimal gradient estimates of solutions to the insulated
		conductivity problem in dimension greater than two. arXiv:2110.11313v1 [math.AP].
		To appear in J Eur Math Soc, 2022.
		
		\bibitem{DLY2} 
		H.J. Dong; Y.Y. Li; Z.L. Yang, Gradient estimates for the insulated conductivity problem:
		the non-umbilical case. arXiv:2203.10081v1 [math.AP].
		
		\bibitem{DongLi}
		H.J. Dong; H.G. Li, Optimal estimates for the conductivity problem by Green's function method. Arch. Ration. Mech. Anal. 231 (2019), no. 3, 1427--1453.
		
		\bibitem{DongZhang}
		H.J. Dong; H. Zhang, On an elliptic equation arising from composite materials. Arch. Ration. Mech. Anal. 222 (2016), no. 1, 47--89. 
		
		
		\bibitem{GaldiBook}
		G.P. Galdi, An introduction to the mathematical theory of the Navier-Stokes equations: steady-state problems. Springer, Cham (2011).
		
		\bibitem{Hill6} 
		D. G\'erard-Varet; M. Hillairet, Regularity issues in the problem of fluid structure interaction. Arch. Ration. Mech. Anal. 195 (2010), 375-407. 
		
		\bibitem{DM} 
		D. G\'erard-Varet; M. Hillairet, Computation of the drag force on a sphere close to a wall: The roughness issue. ESAIM Math. Model. Numer. Anal. 46 (2012), 1201-1224.
		
		\bibitem{DMC}  
		D. G\'erard-Varet; M. Hillairet; C. Wang, The influence of boundary conditions on the contact problem in a 3D Navier-Stokes flow. J. Math. Pures Appl. 103 (2015), 1-38.
		
		\bibitem{gn} 
		Y. Gorb; A. Novikov, Blow-up of solutions to a $p$-Laplace equation. Multiscale Model. Simul., 10 (2012), pp. 727--743.
		
		\bibitem{H} 
		M. Hillairet, Lack of collision between solid bodies in a 2D constant-density incompressible viscous flow. Communications in Partial Differential Equations 32 (2007), 1345--1371.
		
		\bibitem{Hill1} 
		M. Hillairet; T. Kela\"{i}, Justification of lubrication approximation: An application to fluid/solid interactions. Asymptot. Anal. 95 (2015), no. 3-4, 187-241.
		
		\bibitem{Jef1} 
		G. B. Jeffery, Plane stress and plane strain in bipolar coordinates. Phil. Trans. Roy. Soc. London A 221 (1921), 265--293.
		
		\bibitem{Jef2} 
		G. B. Jeffrey, The rotation of two circular cylinders in a viscous fluid. Proc. Roy. Soc. A 101 (1922), 169--174.
		
		\bibitem{KLeY} 
		H. Kang; H. Lee; K. Yun, Optimal estimates and asymptotics for the stress concentration between closely located stiff inclusions. Math. Ann. 363 (2015) 1281--1306.
		
		\bibitem{kly} 
		H. Kang; M. Lim; K. Yun, Asymptotics and computation of the solution to the conductivity equation in the presence of adjacent inclusions with extreme conductivities. J. Math. Pures Appl. (9) 99 (2013), 234--249.
		
		\bibitem{kly2} 
		H. Kang; M. Lim; K. Yun, Characterization of the electric field concentration between two adjacent spherical perfect conductors. SIAM J. Appl. Math. 74 (2014), 125--146.
		
		
		\bibitem{KY} 
		H. Kang; S. Yu, Quantitative characterization of stress concentration in the presence of closely spaced hard inclusions in two-dimensional linear elasticity. Arch. Ration. Mech. Anal. 232 (2019), 121--196. 
		
		\bibitem{Li2021} 
		H.G. Li, Lower bounds of gradient's blow-up for the Lam\'e system with partially infinite coefficients. J. Math. Pures Appl. 149 (2021) 98--134.
		
		\bibitem{LLBY} 
		H.G. Li; Y.Y. Li; E.S. Bao; B. Yin, Derivative estimates of solutions of elliptic systems in narrow regions. Quart. Appl. Math.  72  (2014),  no. 3, 589--596.
		
		\bibitem{LX}  
		H.G. Li; L.J. Xu, Stress concentration between two adjacent rigid inclusions for the stokes flow in 2D and 3D. arXiv: 2204.00254v1. [math. AP].
		
		\bibitem{LXZ} 
		H.G. Li; L.J. Xu; P.H. Zhang, Stress blow-up analysis when a suspending rigid particle approaches the boundary in Stokes flow:  2D case. To appear in SIAM J. Math. Anal. (2023).
		
		\bibitem{LZ} 
		H.G. Li; Z.W. Zhao, Boundary blow-up analysis of gradient estimates for Lam\'{e} systems in the presence of m-convex hard inclusions. SIAM J. Math. Anal. 52 (2020), no. 4, 3777--3817.
		
		\bibitem{LN} 
		Y.Y. Li; L. Nirenberg, Estimates for elliptic system from composite material. Comm. Pure Appl. Math., 56 (2003), pp. 892-925. 
		
		\bibitem{LV} 
		Y.Y. Li; M. Vogelius, Gradient estimates for solutions to divergence form elliptic equations with discontinuous coefficients. Arch. Ration. Mech. Anal., 153 (2000), pp. 91-151.
		
		\bibitem{LiY} 
		Y.Y. Li; Z.L. Yang, Gradient estimates of solutions to the insulated conductivity problem in dimension greater than two.  Math Ann., (2023) 385:  1775--1796.
		
		\bibitem{LY} 
		M. Lim; K. Yun, Blow-up of electric fields between closely spaced spherical perfect conductors. Comm. Partial Differential Equations, 34 (2009), 1287--1315.
		
		\bibitem{Lu} 
		Y. Lu, Uniform estimates for Stokes equations in a domain with a 
		small hole and applications in homogenization problems. Calc. Var. Partial Differential Equations, 60:228 (2021).
		
		\bibitem{Solo}
		V. A. Solonnikov, General Boundary Value Problems for Douglis-Nirenberg Elliptic Systems II, Trudy Mat. Inst. Steklov, 92, 233-297; English Transl.: Proc. Steklov Inst. Math, 92, (1966), 212--272.
		
		\bibitem{Wann} 
		G. H. Wannier, Hydrodynamics of lubrication. Q. Appl. Math. 8 (1950), 7--32.
		
		\bibitem{W} 
		B. Weinkove,  The insulated conductivity problem, effective gradient estimates and the maximum principle. Math Ann., (2023) 385: 1--16.
		
		\bibitem{Yun} 
		K. Yun, Estimates for electric fields blown up between closely adjacent conductors with arbitrary shape. SIAM J. Appl. Math. 67 (2007),  714--730.
		
		\bibitem{Yun2} 
		K. Yun, An optimal estimate for electric fields on the shortest line segment between two spherical insulators in three dimensions.  J. Differential Equations 261 (2016), no. 1, 148--188.
		
	\end{thebibliography}
\end{document}